\newcommand{\1}{\mathds{1}}
\newcommand{\edge}{y}
\newcommand{\MA}{\by}
\newcommand{\MAO}{\MA^\text{\rm o}}
\newcommand{\MAM}{\MA^\text{\rm m}}
\newcommand{\block}{\mathcal{Q}}
\newcommand{\dyad}{\mathcal{D}}
\newcommand{\node}{\mathcal{N}}
\newcommand{\Pbb}{\mathbb{P}}
\numberwithin{equation}{section}
\theoremstyle{plain}
\newtheorem{thm}{Theorem}[section]
\newtheorem{dof}[thm]{Definition}
\newtheorem{proposition}[thm]{Proposition}
\newtheorem{corollaire}[thm]{Corollary}
\newtheorem{lemme}[thm]{Lemma}
\newtheorem{property}[thm]{Property}
\theoremstyle{remark}
\newtheorem{rem}[thm]{Remark}
\newtheorem{example}{Example}
\newcommand{\proofbegin}{\paragraph{Proof.}\hspace{1em}\\}
\newcommand{\proofend}{\begin{flushright}
$\square$
\end{flushright}}
\newcommand{\dd}{d}
\newcommand{\g}{g}
\newcommand{\q}{\mathcal{Q}}
\newcommand{\ii}{i}
\newcommand{\jj}{j}
\newcommand{\kk}{q}
\newcommand{\el}{\ell}
\newcommand{\n}{n}
\newcommand{\vrai}{\star}
\newcommand{\al}{\pi}
\newcommand{\als}{\al^\vrai}
\newcommand{\alkl}{\al_{\kk\el}}
\newcommand{\piql}{\pi_{q \el}}
\newcommand{\alskl}{\als_{\kk\el}}
\newcommand{\bal}{\boldsymbol{\al}}
\newcommand{\bals}{\boldsymbol{\al}^\vrai}
\newcommand{\thetaa}{\theta}
\newcommand{\Thetaa}{\Theta}
\newcommand{\btheta}{\boldsymbol{\thetaa}}
\newcommand{\hbtheta}{\widehat{\btheta}}
\newcommand{\bthetas}{\boldsymbol{\thetaa}^\vrai}
\newcommand{\bTheta}{\boldsymbol{\Thetaa}}
\newcommand{\pii}{\alpha}
\newcommand{\bpi}{\boldsymbol{\pii}}
\newcommand{\pik}{\pii_\kk}
\newcommand{\bpis}{\boldsymbol{\pii}^\vrai}
\newcommand{\zjl}{\z_{\jj\el}}
\newcommand{\y}{y}
\newcommand{\by}{\mathbf{\y}}
\newcommand{\byo}{\mathbf{\y^o}}
\newcommand{\br}{\mathbf{r}}
\newcommand{\yij}{\y_{\ii\jj}}
\newcommand{\Y}{Y}
\newcommand{\R}{R}
\newcommand{\z}{z}
\newcommand{\bfz}{\mathbf{\z}}
\newcommand{\bz}{\bfz}
\newcommand{\zik}{\z_{\ii\kk}}
\newcommand{\ziq}{\z_{\ii q}}
\newcommand{\rrij}{r_{\ii \jj}}
\newcommand{\bzs}{\bfz^\vrai}
\newcommand{\zj}{\z_{\jj}}
\newcommand{\zi}{\z_{\ii}}
\newcommand{\Sal}{\boldsymbol{S}^{\vrai}}
\newcommand{\Salkl}{S^\vrai_{\kk\el}}
\newcommand{\Salmin}{S^\vrai_{\min}}
\newcommand{\Salmax}{S^\vrai_{\max}}
\newcommand{\mcL}{\mathcal{L}}
\newcommand{\Lc}{\mcL_{co}}
\newcommand{\prob}{p}
\newcommand{\Prob}{\mathbb{P}}
\newcommand{\Esp}{\mathbb{E}}
\newcommand{\Var}{\mathbb{V}}
\newcommand{\dens}{\varphi}
\newcommand{\norm}{\psi}
\newcommand{\normp}{\norm'}
\newcommand{\normpm}{(\normp)^{-1}}
\newcommand{\mcZ}{\mathcal{Z}}
\newcommand{\LL}{\mcL}
\newcommand{\Lcs}{\Lc^{\vrai}}
\newcommand{\sommecolonne}{+}
\newcommand{\zsumk}{\z_{\sommecolonne\kk}}
\newcommand{\zsumq}{\z_{\sommecolonne q}}
\newcommand{\zs}{\z^{\vrai}}
\newcommand{\zssumk}{\zs_{\sommecolonne\kk}}
\newcommand{\Sig}{\Sigma}
\newcommand{\mcN}{\mathcal{N}}
\newcommand{\hpi}{\hat{\pii}}
\newcommand{\hbpi}{\widehat{\bpi}}
\newcommand{\pizk}{\widehat{\pii}_{\kk}(\bz)}
\newcommand{\pizl}{\widehat{\pii}_{\el}(\bz)}
\newcommand{\pizq}{\widehat{\pii}_{q}(\bz)}
\newcommand{\pizql}{\widehat{\pi}_{q\el}(\bz)}
\newcommand{\hal}{\widehat{\al}}
\newcommand{\hbal}{\widehat{\bal}}
\newcommand{\kp}{\kk'}
\def\AArm{\fam0 }
\def\R{{\AArm I\!R}}%
\def\un{{\AArm 1\!I}}%
\newcommand{\RQ}{\R}
\newcommand{\RQbz}{\RQ(\bz)}
\newcommand{\Q}{Q}
\newcommand{\transpose}{{^{T}}}
\newcommand{\vareps}{\varepsilon}
\newcommand{\Lamb}{\Lambda}
\newcommand{\Lambtilde}{\tilde{\Lamb}}
\newcommand{\lp}{\el'}
\newcommand{\bary}{\bar{y}}
\newcommand{\hy}{\widehat{\y}}
\newcommand{\hykl}{\hy_{\kk\el}}
\newcommand{\hyklz}{\hy_{\kk\el}(\bz)}
\newcommand{\hyqlz}{\hy_{q\el}(\bz)}
\newcommand{\baral}{\bar{\al}}
\newcommand{\baralkl}{\baral_{\kk\el}}
\newcommand{\barykl}{\bary_{\kk\el}}
\newcommand{\baryklz}{\bary_{\kk\el}(\bz)}
\newcommand{\neighborsize}{\kappa}
\newcommand{\vmin}{\underline{\sigma}}
\newcommand{\vmax}{\bar{\sigma}}
\newcommand{\tnd}{t_{\n\dd}}
\newcommand{\tn}{t_{\n}}
\newcommand{\Fn}{LR}
\newcommand{\G}{ELR}
\newcommand{\bigO}{\mathcal{O}}
\newcommand{\smallO}{o}
\newcommand{\Om}{\Omega}
\DeclareMathOperator{\Diag}{Diag}
\DeclareMathOperator{\Trace}{Tr}
\DeclareMathOperator{\Diam}{Diam}
\DeclareMathOperator{\Symmetric}{Sym}
\DeclareMathOperator{\KL}{KL}
\newcommand{\bY}{\mathbf{\Y}}
\newcommand{\bH}{\mathbf{H}}
\newcommand{\bthetaEMV}{\widehat{\btheta}_{MLE}}
\newcommand{\bpiEMV}{\widehat{\bpi}_{MLE}}
\newcommand{\balEMV}{\widehat{\bal}_{MLE}}
\newcommand{\setQ}{\mathbb{Q}}
\newcommand{\Jvar}{J}
\newcommand{\mcH}{\mathcal{H}}
\newcommand{\bthetavar}{\widehat{\btheta}_{var}}
\newcommand{\balvar}{\widehat{\bal}_{var}}
\newcommand{\bpivar}{\widehat{\bpi}_{var}}
\newcommand{\mcQ}{\mathcal{Q}}
\newcommand{\bthetaMC}{\widehat{\btheta}_{MC}}
\newcommand{\bthetaVAR}{\widehat{\btheta}_{VAR}}
\begin{document}

\title{Consistency and Asymptotic Normality of Stochastic Block Models Estimators from Sampled Data}


\author{Mahendra Mariadassou\footnote{mahendra.mariadassou@inrae.fr}  \ \& \ Timoth\'ee Tabouy\footnote{timothee.tabouy@gmail.com}}
\date{$^*$ MaIAGE, INRAE, Universit\'e Paris-Saclay, 78352 Jouy-en-Josas, France \\ $\dag$ UMR MIA-Paris, AgroParisTech, INRA, Universit\'e Paris-Saclay, 75005 Paris, France}
%
%
%
%
%

\maketitle

\begin{abstract}
Statistical analysis of network is an active research area and the literature counts a lot of papers concerned with network models and statistical analysis of networks. However, very few papers deal with missing data in network analysis and we reckon that, in practice, networks are often observed with missing values. In this paper we focus on the Stochastic Block Model with valued edges and consider a MCAR setting by assuming that every dyad (pair of nodes) is sampled identically and independently of the others with probability $\rho > 0$. We prove that maximum likelihood estimators and its variational approximations are consistent and asymptotically normal in the presence of missing data as soon as the sampling probability $\rho$ satisfies $\rho\gg\log(n)/n$. \\

\noindent Stochastic Block Model $\cdot$ Maximum Likelihood $\cdot$ Missing data $\cdot$ Concentration Inequality

\end{abstract} 

%

\section{Introduction} \label{sec:introduction}
For the last decade, statistical network analyses has a been a very active research topic and the statistical modeling of networks has found many applications in social sciences and biology for example \cite{Aicher2014},  \cite{Barbillon2015}, \cite{mariadassou2010}, \cite{wasserman1994} and \cite{Zachary1977}.

Many random graphs models have been widely studied, either from a theoretical or an empirical point of view. The first model studied was Erd\H{o}s-R\'enyi model \citep{Erdos1959} which assumes that each pair of nodes (dyad) is connected independently to the others with the same probability.  This model assumes homogeneity of all nodes across the network. In order to alleviate this constraint, many families of models have been introduced. Most are endowed with a latent structure \citep[reviewed in][]{matias2014modeling} to capture heterogeneity across nodes. Among those, the Stochastic Block Model \citep[in short SBM, see][]{frank1982cluster,holland1983stochastic} is one of the oldest and most studied as it is highly flexible and can capture a large variety of structures (affiliation, hub, bipartite and many other). In order to estimate this model, Bayesian approaches were first proposed \citep{snijders1997estimation,Nowicki2001} but have been superseded by variational methods \citep{daudin2008mixture,latouche2012variational}. The former class of approaches are exact but lack the computational efficiency and scalability that the latter offers.

Theoretical guarantees concerning maximum likelihood estimators (in short MLE) and variational estimators (in short VE), based on variational approximations of the likelihood, for the binary SBM estimation are quite difficult to obtain.  In \cite{celisse2012consistency}, consistency of MLE and VE is proven but asymptotic normality requires that the estimators converges at rate at least $n^{-1}$, which is not proven in the paper, although some results were available for some particular cases (affiliation for example). \cite{ambroise2012new} tackles the specific case of affiliation model with equal group proportion and proves the consistency and asymptotic normality of parameter estimates. \cite{bickel2013asymptotic} extends those results to arbitrary binary SBM graphs and improves \cite{celisse2012consistency} by removing the condition on the convergence rate, as it is automatically satisfied by the MLE. Following along the path of \cite{bickel2013asymptotic}, \cite{BraultKeribinMariadassou2017} proved consistency and asymptotic normality of estimators (MLE and VE) to weighted Latent Block Models where the weights distribution belongs to a one-dimensional exponential families. In particular, considering unbounded edge values invalidates several parts of the proofs for binary graphs and requires substantial adaptations and additional results, notably concentration inequalities for sums of unbounded, non-gaussian random variables.   

Some results are also available for the related semi-parametric problem of assignment reconstruction. \cite{mariadassou2015} show that the conditional distribution of the (latent) assignments converge to a degenerate distribution and \cite{BinYu2010} prove  that, when the data are generated according to a SBM model, spectral methods are consistent. \cite{Choi2012StochasticBW} extend those results to settings where the density of the graph goes to $0$ as $\Omega(\log^\alpha(n)/n)$ (for $\alpha$ large enough) and/or the number of groups goes to $+\infty$ as $\sqrt{n}$. 
\citet{Chatterjee2015} proves  also strong results for reconstruction of large matrices with noisy entries estimation and partial observation of the dyads, by means of a universal singular value thresholding (USVT). In the special case of binary SBM with $k$ groups, he achieves a reconstruction error rate of order $\sqrt{k/n}$ as soon as the fraction of observed dyads is at least $\Omega(\log^\alpha(n)/n)$ for (for $\alpha$ large enough). Since USVT replaces missing dyads with $0$s, it naturally achieves the same limiting rate as the sparse setting. Finally, \cite{wang2017} and \cite{hu2016} also show that model selection for the number $k$ of groups is consistent for dense graphs, they suggest using a penalized likelihood criteria with penalty of the form $\frac{k(k+1)}{2}\log(n) + \lambda n \log(k)$ where $\lambda$ is a tuning parameter.

In this paper we consider a simple setting with fixed number of groups and fixed density but weighted edges and missing values. In most network studies, there is a strong asymmetry between the presence of an edge and its absence: the lack of proof that an edge exists is taken as proof that the edge does not exist and edges with uncertain status are considered as non existent in the graph. This is the strategy adopted in most sparse asymptotic settings where the density of edges goes to $0$ asymptotically \cite{bickel2013asymptotic}. We adopt a different point of view where edges with uncertain status are considered as missing, rather than absent and explicitly accounted for their missing nature. We use the framework of \cite{Rubin1976} and its application to network data, see \cite{Kolaczyk2009} and \cite{Handcock2010}, for parameter inference in presence of missing values and more specifically its applications to SBM \cite{Tabouy2019}. We prove that, in the MCAR setting where each dyad is missing independently and with the same probability, the MLE and variational estimates are still consistent and asymptotically normal. 

The article is organized as follows. We first present the model and missing data theory applied to our context with some examples of sampling designs. We then posit some definitions and discuss the  assumptions required for our results in Section~\ref{sec:statistical-framework}. In Section~\ref{sec:complete_model} we establish asymptotic normality for the complete-observed model (\textit{i.e.} observed SBM where latent variables are known). Section~\ref{sec:main} is the main result of this paper and states that the observed-likelihood behaves like the complete-observed likelihood (\textit{i.e.} joint likelihood of the observed data and latent variables) close to its maximum. {Consequences for the MLE and variational estimator are in discussed in Section~\ref{sec:MLE}}. The proof is sketched in Section~\ref{sec:proof}.  Comparison to existing results are made and discussed in Section~\ref{sec:discussion}. Technical lemmas and details of the proofs are available in the appendices.



\section{Statistical framework} \label{sec:statistical-framework}
\subsection{Notations}

\begin{center}
\begin{tabular}{|l|c|}
 \hline
 Notation & Definition \\
 \hline
 $\node = \{1, \dots, n\}$     & Node set \\
 $i, j$                        & Node index \\
 $\block = \{1, \dots, Q\}$    & Block set \\
 $l, q$                        & Block index \\
 $\dyad = \node \times \node$  & Dyad set \\
 $y_{ij}$ and $r_{ij}$         & Dyad value and observation status\\
 $z_i$ and $\zs_i$             & Test and true block membership \\
 $\bpi$ and $\bpis$            & Test and true block proportions \\
 $\bal$ and $\bals$            & Test and true parameters of dyad value distribution \\
 $\bTheta$                     & Parameter space \\
 \hline
\end{tabular}
\end{center}

\subsection{Stochastic Block Model}

In  SBM,  nodes  from  a  set  $\node  \triangleq  \{1,\dots,n\}$  are
distributed among a set $\block  \triangleq \{1, \dots, Q\}$ of hidden
blocks that  model the latent structure  of the graph.  The block-memberships are encoded by $(z_i,  i\in\node)$  where the $z_i$ are independant random variables with prior
probabilities $\alpha  = ( \alpha_{1}, \dots,  \alpha_{Q})$, such that
$\Pbb(z_i = q) = \alpha_q$,  for all $q\in\block$. The value $\edge_{ij}$ of any dyad $(i, j)$ in $\dyad = \node \times \node$, with $i \neq j$, only depends on the blocks $i$ and $j$ belong to. The variables $(\edge_{ij})$s are thus independent conditionally on the $(z_i)$s:
\begin{equation}
\nonumber
\edge_{ij}    \   |    \   z_i=q,    z_j   =    \ell   \sim^{\text{ind}}
\dens(., \pi_{q\el}), \quad \forall (i,j) \in\dyad, \quad i \neq j , \quad \forall
(q,\ell) \in\block\times\block.
\end{equation}
In the following, $\MA=(\edge_{ij})_{i,j\in\dyad}$ is the $n\times n$ adjacency matrix of the random graph, $\bz = (z_1, \dots, z_n)$ the $n$-vector of the latent blocks.  With a slight abuse of notation, we associate to $z_i$ a binary vector $(z_{i1}, \dots, z_{iQ})$ such that
$z_i = q \Leftrightarrow z_{iq} = 1, z_{i\ell} = 0$, for all
$\ell \neq q$. In this case $\bz$ is a $n \times Q$ matrix.\\

We note the complete parameter set as $\btheta=(\bpi,\bal)\in\bTheta$ where $\bTheta$ stands for the parameter space. When performing inference from data, we note $\bthetas = (\bpis, \bals)$ the true parameter set, \emph{i.e.} the parameter values used to generate the data, and $\bzs$ the true (and usually unobserved) memberships of nodes. For any $\bz$, we also note:
\begin{itemize}
\item $\zsumk = \sum_{\ii} \zik$ the size {of the $\kk^{th}$ community (or block)} for membership $\bz$
\item $\zssumk$ its counterpart for $\bzs$.
\end{itemize}

\subsection{Missing data for SBM}

Regarding SBM inference, a missing value corresponds to a missing entry in the adjacency matrix $\MA$, typically denoted by \texttt{NA}'s. We rely on the $n\times n$ sampling matrix $\br$ to record the missing state of each entry:
\begin{equation}
\label{eq:R}
 (r_{ij}) = \begin{cases}
  1 &   \text{ if $\edge_{ij}$  is observed,}  \\
  0 &   \text{ otherwise.}  \\
\end{cases}
\end{equation}
As  a  shortcut,  we use  $\MAO  =  \{\edge_{ij}  :  r_{ij} =  1\}$  and
$\MAM=\{\edge_{ij} :  r_{ij} =  0 \}$  to respectively denote the \textit{observed} and \textit{missing} dyads.  The \emph{sampling  design} is the description
of the stochastic process that generates  $\br$.  It is assumed that the
network exists before the sampling design acts upon it, which is fully characterized by the conditional distribution $p_\psi(\br|\MA)$, the parameters of which are such   that   $\psi$   and   $\theta$   live   in   a   product   space $\Theta   \times   \Psi$. In this paper we are going to focus on a specific type of missingness, called missing completely at random (MCAR) for which $p_\psi(\br|\MA) = p_\psi(\br)$ and leave aside more complex forms of dependencies such as Missing at random (MAR) and Not missing at random (NMAR). 

We then follow the framework of \citep{Rubin1976} and \cite{Tabouy2019} for  missing data and define the joint probability density function as
\begin{equation}
p_{\theta, \psi}(\MAO, \bz, \br)=\int p_{\theta}(\MAO,\MAM, \bz)p_\psi(\br|\MAO,\MAM, \bz)\textrm{d}\MAM.
\label{eq:likelihood}
\end{equation}

\begin{property}\label{prop:mar} According to Equation
  \eqref{eq:likelihood}, if the sampling design is MCAR, then maximising $p_{\theta, \psi}(\MAO, \bz, \br)$ or $p_{\theta, \psi}(\MAO, \br)$ in $\theta$  is equivalent  to maximising $p_{\theta}(\MAO)$ in $\theta$, this corresponds to the ignorability notion defined in \cite{Rubin1976}.
\label{MAR}
\end{property}

\subsection{Sampling design examples}
\label{sec:sampling}

We present here some examples of sampling designs to illustrate differences between notions of MCAR, MAR and NMAR.

\begin{dof}[Random dyad sampling]
  Each   dyad    $(i,j)   \in\dyad$    has   the    same   probability
  $\Pbb(r_{ij}=1)=\rho$ of being observed, independently of the others. This design is MCAR. 
\end{dof}

\begin{dof}[Random node sampling]
 The \textit{random node sampling} consists in selecting independently with probability $\rho$ a set of nodes and then observing the corresponding rows and columns of matrix $\by$.
\end{dof}

The major point in both examples is that the probability ($\rho$ in \textit{random dyad sampling} and $1-(1-\rho)^2$ in the \textit{random node sampling}) of observing a dyad does not depend on its value.
In contrast, the following dyad-centered sampling design adapted to binary networks is NMAR since the probability to observe a dyad depends on its value:

\begin{dof}[Double standard sampling]
  Each   dyad    $(i,j)   \in\dyad$  is observed, independently of other dyads, with a probability depending on its value: $\Pbb(r_{ij}=1|y_{ij}=0)=\rho_0$ and $\Pbb(r_{ij}=1|y_{ij}=1)=\rho_1$.
\end{dof}

For non-binary networks, specifying the sampling design is more involved and requires defining the sampling density for every possible value of $y_{ij}$, \emph{e.g.} $(\Pbb(r_{ij}=1|y_{ij}=k))_{k \in \mathbb{N}}$ for Poisson-valued edges. \\

\begin{rem} In this paper, we focused on data sampled according to \textit{random dyad sampling}, which is the simplest case but already yields valuable insights into the differences between the partially and fully sampled settings. \\
As observed above, there are however many other ways to sample a network. In the case of node-centered sampling design, like \textit{random node sampling}, the main difficulty to prove consistency and asymptotic normality is the dependency between the $r_{ij}$ variables. Indeed, in random node sampling, the variable $r_{i_0 j_0}$ depends on all $r_{i j_0}$ and $r_{i_0 j}$ (for all $i, j \in \node$). As a consequence, a different inference strategy is required and many results proved in this paper are not valid under \emph{random node sampling}. NMAR sampling designs raises problem of their own: each design requires its own estimation procedure \citep{Tabouy2019} and therefore its own analysis. For example, parameter estimation under the seemingly simple \emph{double standard sampling} for binary networks is still an open problem: numerical experiments suggest that $\btheta = (\bal, \bpi)$ and $\boldsymbol{\psi} = (\rho_0, \rho_1)$ are jointly identifiable but there is no formal proof. 
\end{rem}

\subsection{Observed-likelihoods}
When the labels are known, the {\em complete-observed log-likelihood} is given by:
\begin{equation}
  \label{eq:log-vraisemblance-complete}
    \Lc(\bz;\btheta) = \log\prob(\byo,\bz;\btheta) = \sum_{i,q} \ziq \log \pii_q +  \sum_{\substack{ i, j, q, \ell \\ i \neq j}} \ziq\zjl\rrij \log \dens (\yij;\piql)
\end{equation}
But the labels are usually unobserved, and the {\em observed log-likelihood} is obtained by integration over all memberships:
\begin{equation}
  \label{eq:log-vraisemblance-observee}
  \LL_o(\btheta)=\log\prob(\byo;\btheta) = \log \left( \sum_{\bz \in \mcZ}\prob(\byo,\bz;\btheta) \right).
\end{equation}

\subsection{Models and Assumptions}\label{sec:assumptions}
We focus here on parametric models where $\dens$ belongs to a regular one-dimension exponential family in canonical form:
\begin{equation} \label{eq:exp-distr}
  \dens(y, \pi) = b(y)\exp(\pi y - \norm(\pi)),
\end{equation}
where $\pi$ belongs to the space  $\mathcal{A}$, so that $\dens(\cdot, \pi)$ is well defined for all $\pi \in \mathcal{A}$. Classical properties of exponential families ensure that $\norm$ is convex, infinitely differentiable on $\mathring{\mathcal{A}}$, that $\normpm$ is well defined on $\normp(\mathring{\mathcal{A}})$. Furthemore, when $\edge_{\pi} \sim \dens(., \pi)$, $\Esp[\edge_\pi] = \normp(\pi)$ and $\Var[\edge_{\pi}] = \normp'(\pi)$. \\

In the following, we recall assuming that missing data are produced according to a \textit{random dyad sampling} with parameter $\rho>0$. \\

Moreover, we make the following assumptions on the parameter space {and the asymptotics of $\rho$} ~:
\begin{enumerate}
\item[$A_0$]: {$\rho$ goes to $0$ but satisfies $\rho \gg \log(n)/n$}
\item[$A_1$]: There exists a positive constant $c$, and a compact interval $C_\pi$ such that 
  \begin{equation*}
    \bTheta \subset [c, 1-c]^{\q} \times C_{\pi}^{\q \times \q} \quad \text{with} \quad C_\pi \subset \mathring{\mathcal{A}}.
  \end{equation*}
\item[$A_2$]: The true parameter $\bthetas = (\bpis, \bals)$ lies in the interior of $\bTheta$.
\item[$A_3$]: The map $\pi \mapsto \dens(\cdot, \pi)$ is injective.
\item[$A_4$]:  The coordinates of $\bals \normp(\bpis)$, where $\normp$ is applied component-wise, are pairwise distinct.
\end{enumerate}

The previous assumptions are standard. {Assumption~$A_0$ ensures that the fraction of observed dyad is not too small}. Assumption~$A_1$ ensures that the group proportions are bounded away from $0$ and $1$ so that no group disappears when $\n$ goes to infinity. It also ensures that $\pi$ is bounded away from the boundaries of the $\mathcal{A}$. This is essential for the subexponential properties of Propositions~\ref{prop:subexp_variables} and \ref{prop:subexp_mixture}. $A_2$ is in line with standard assumptions in parametric statistics. $A_3$ is necessary for identifiability purposes: the model is trivially not identifiable if the map $\pi \mapsto \dens(., \pi)$ is not injective. $A_4$ ensures identifiability of SBM parameters under \textit{random dyad sampling}. Note that, combined with $A_3$, it implies that all columns and all rows of $\bals$ are distincts and therefore that no two groups have the connectivity profile. In the following, we  consider the number of blocks $\mathcal{Q}$ to be known.

\subsection{Identifiability}
\label{sec:identifiability}
Since $\br$ is independant on $\by$, the identifiability of SBM with emission law in the one-dimension exponential family under \textit{random dyad sampling} can be stated in two steps. First the sampling parameter $\rho$ and secondly the SBM parameters $\bthetas = (\bpis, \bals)$ given $\rho$. 

\begin{proposition}
\label{prop:identfiabilitySampParam}
The sampling parameter $\rho > 0$ of \textit{random dyad sampling}
is identifiable w.r.t. the sampling distribution.
\end{proposition}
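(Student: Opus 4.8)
The plan is to exploit the fact that, under \emph{random dyad sampling}, the sampling matrix $\br$ is part of the observed data (it merely encodes the positions of the \texttt{NA}'s in $\MA$), so that the sampling distribution \emph{is} the law of $\br$. Since the $r_{ij}$ are i.i.d.\ Bernoulli$(\rho)$ over the $n(n-1)$ ordered pairs $i\neq j$, this law is
\[
 p_\rho(\br) \;=\; \prod_{i\neq j}\rho^{\,r_{ij}}(1-\rho)^{1-r_{ij}} \;=\; \rho^{\,\sum_{i\neq j} r_{ij}}\,(1-\rho)^{\,n(n-1)-\sum_{i\neq j} r_{ij}},
\]
and establishing identifiability amounts to showing that the map $\rho\mapsto p_\rho$ is injective on $(0,1]$.

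First I would fix $\rho_1\neq\rho_2$ in $(0,1]$ and exhibit an event separating $p_{\rho_1}$ and $p_{\rho_2}$: the marginal of a single dyad already suffices, since $p_\rho(r_{12}=1)=\rho$, hence $p_{\rho_1}(r_{12}=1)=\rho_1\neq\rho_2=p_{\rho_2}(r_{12}=1)$, so the two sampling laws differ. (Equivalently, evaluating the joint mass at the configuration $\br\equiv 1$ gives $\rho^{\,n(n-1)}$, which is strictly increasing in $\rho$.) This is exactly identifiability of $\rho$ with respect to the sampling distribution. In the same spirit, and consistently with the asymptotic flavour of the paper, one may also note that the empirical frequency $\widehat\rho_n=\frac{1}{n(n-1)}\sum_{i\neq j}r_{ij}$ converges almost surely to $\rho$ by the strong law of large numbers, which simultaneously provides a consistent estimator and re-proves identifiability.

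The hard part will be essentially nonexistent here: the only point deserving care is the (already established) structure of the model — namely that $\br$ is observed, that under MCAR its conditional law given $\MA$ reduces to $p_\psi(\br)$, and that this law is that of i.i.d.\ Bernoulli$(\rho)$ variables, as in the definition of \emph{random dyad sampling} and as used in Property~\ref{prop:mar}. Granting that, injectivity of the Bernoulli parameter map is immediate and the proposition follows.
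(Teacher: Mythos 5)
Your proof is correct: under \emph{random dyad sampling} the law of $\br$ is that of $n(n-1)$ i.i.d.\ Bernoulli$(\rho)$ variables, independent of $\MA$, so $\rho = \Pbb(r_{12}=1)$ is read off directly from the sampling distribution and the map $\rho \mapsto p_\rho$ is injective. The paper itself does not spell this out --- it simply defers to Tabouy et al.\ (2019), noting only that the argument does not depend on $\by$ being binary --- and your explicit Bernoulli-injectivity argument is exactly the content one expects that citation to carry, so there is nothing to fault here.
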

\begin{proof}
See \cite{Tabouy2019}. The proof does not depend on $\by$ being binary but also holds for $\by$ distributed as in Eq.~\eqref{eq:exp-distr}. 
\end{proof}

\begin{proposition}\label{prop:identifiabilitySBM}
  Let $n\geq 2Q$ and assume that for any $1\leq q \leq Q$, $\rho>0$,
  $\als_q >0$ and that the coordinates of $\bpis \normp(\bals)$, where $\normp$ is applied component-wise, are pairwise   distinct.  Then, under \textit{random dyad sampling}, SBM
  parameters are identifiable w.r.t. the distribution of the observed
  part of the SBM up to label switching.
\end{proposition}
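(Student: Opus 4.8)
The plan is to reduce, using Proposition~\ref{prop:identfiabilitySampParam}, to identifiability of the complete SBM distribution on the $n$ nodes, and then to recover $\bpis$ and $\bals$ by a Prony/Vandermonde moment argument run on the edges incident to one, and then to two, fixed nodes; the bound $n\ge 2Q$ will turn out to be exactly the number of nodes these computations consume. For the reduction: under \emph{random dyad sampling} $\br$ is independent of $\by$ and $\rho>0$ is identifiable by Proposition~\ref{prop:identfiabilitySampParam}, so for every finite set $E$ of dyads the event $\{r_{ij}=1\ \forall (i,j)\in E\}$ has positive probability $\rho^{|E|}$ and, conditionally on it, the observed values $(\edge_{ij})_{(i,j)\in E}$ have exactly the marginal law of that sub-collection of $\by$. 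Hence the law of the observed data determines every finite-dimensional marginal of $\by$, i.e.\ the complete SBM law, and it suffices to show that this law determines $(\bpis,\bals)$ up to a permutation of $\{1,\dots,Q\}$ (the case $Q=1$ being trivial, so assume $Q\ge 2$).

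Next I would recover the block proportions and the connectivity profiles. Write $\mu_q:=\sum_{\ell}\pis_\ell\,\normp(\als_{q\ell})$ — the $q$-th coordinate of $\bpis\normp(\bals)$, pairwise distinct by the hypothesis of the statement (cf.\ $A_4$) — and $g^\star_q:=\sum_{\ell}\pis_\ell\,\dens(\cdot,\als_{q\ell})$. Conditionally on $z_1=q$ the variables $\edge_{12},\dots,\edge_{1n}$ are i.i.d.\ $\sim g^\star_q$ (the other endpoints' blocks being independent of $z_1$ and of each other), so $\Esp[\edge_{12}\edge_{13}\cdots\edge_{1,k+1}]=\sum_q\pis_q\mu_q^{\,k}$ for $0\le k\le n-1$. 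Since $n\ge 2Q$, these power sums are known for $k=0,\dots,2Q-1$, and Prony's argument (a Hankel then a Vandermonde system, using $\pis_q>0$) recovers the atomic measure $\sum_q\pis_q\delta_{\mu_q}$, i.e.\ the pairs $\{(\pis_q,\mu_q)\}_q$ up to a permutation; I would fix such a labelling once and for all. Then, for bounded $h$, $\Esp[h(\edge_{12})\edge_{13}\cdots\edge_{1,k+1}]=\sum_q\pis_q\big(\textstyle\int h\,dg^\star_q\big)\mu_q^{\,k-1}$, and running $k$ over $\{1,\dots,Q\}$ gives a Vandermonde system in the distinct $\mu_q$ whose solution yields $\int h\,dg^\star_q$ for every $q$ and every bounded $h$, hence $g^\star_q$ for each $q$.

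Finally I would recover $\bals$. Knowing a mixture $g^\star_q=\sum_\ell\pis_\ell\dens(\cdot,\als_{q\ell})$ only reveals the \emph{unordered} family $\{\dens(\cdot,\als_{q\ell})\}_\ell$, so the index $\ell$ must be tied coherently across the rows $q$; for this I would use two fixed nodes $1,2$ and two \emph{disjoint} further node sets of sizes $a$ and $b$. Conditionally on $z_1=q,z_2=\ell$, the variable $\edge_{12}\sim\dens(\cdot,\als_{q\ell})$ is independent of the edges from $1$ to its $a$ nodes (i.i.d.\ $g^\star_q$) and of those from $2$ to its $b$ nodes (i.i.d.\ $g^\star_\ell$), so that
\[
\Esp\!\left[h(\edge_{12})\!\prod_{j=3}^{2+a}\!\edge_{1j}\!\prod_{j=3+a}^{2+a+b}\!\edge_{2j}\right]=\sum_{q,\ell}\pis_q\pis_\ell\Big(\textstyle\int h\,d\dens(\cdot,\als_{q\ell})\Big)\mu_q^{\,a}\mu_\ell^{\,b}.
\]
Letting $a,b$ range over $\{0,\dots,Q-1\}$ uses nodes $1,\dots,2+2(Q-1)$ — exactly $n\ge 2Q$ — and the coefficient array is the Kronecker product of two Vandermonde matrices in the distinct $\mu_q$, hence invertible; solving gives $\int h\,d\dens(\cdot,\als_{q\ell})$ for every ordered pair $(q,\ell)$ and every bounded $h$, i.e.\ the law $\dens(\cdot,\als_{q\ell})$, which by $A_3$ determines $\als_{q\ell}$. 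Since here $q,\ell$ carry the labelling fixed above, this recovers $(\bpis,\bals)$ up to a single permutation of $\{1,\dots,Q\}$, i.e.\ up to label switching.

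The routine ingredients are the finiteness of all the moments used (automatic since $A_1$ keeps the canonical parameters in the interior of $\mathcal{A}$, so the emission laws have a moment-generating function near $0$) and the invertibility of the Vandermonde/Kronecker systems. I expect the genuine obstacle to be the last step: extracting the individual $\als_{q\ell}$ from the row-mixtures $g^\star_q$ in a way that is consistent across rows; the two-node mixed-moment identity above is precisely the device that forces consistency, and — not coincidentally — it is also the computation that consumes all $2Q$ nodes, which is what makes $n\ge 2Q$ the natural hypothesis. An alternative, more black-box route would invoke the Allman--Matias--Rhodes / Kruskal identifiability theorem for finite mixtures of product distributions applied to a three-way split of the node set; the work then shifts to deducing the required Kruskal-rank conditions from $A_3$, the distinctness hypothesis and $\pis_q>0$, and to checking that such a split again needs at least $2Q$ nodes.
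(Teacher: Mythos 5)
Your proposal is correct and follows essentially the same route as the paper, whose own ``proof'' simply defers to the moment/Vandermonde argument of \cite{celisse2012consistency} and \cite{Tabouy2019} with the two modifications you implement: replacing the binary row sums $s_q$ by the conditional means $\mu_q=\Esp[y_{ij}\,|\,z_i=q]$ and inverting $\normp$ at the end via $A_3$. Your write-up is in effect the full version of the argument the paper only cites, including the correct accounting for why $n\ge 2Q$ nodes suffice.
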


\begin{proof}
The proof is nearly identical to the one written in \cite{Tabouy2019} and inspired by \cite{celisse2012consistency} for the binary SBM under \textit{random dyad sampling}. However, substituting $\mathbb{E}[y_{ij}|z_i=q]$ to $s_q$ in the proof ensures that $\bpis$ is identifiable. Finally, the fact that $(\psi^\prime)^{-1}$ is a one-to-one map ensures that $\bals$ is identifiable.
\end{proof}
Note that asymptotically, the assumption $n \geq 2Q$ is always satisfied since $Q$ is fixed and $n$ grows to infinity. 

\subsection{Subexponential variables}
\label{sec:def_subexp}

\begin{rem}
Since we restricted $\pi$ in a bounded subset of $\mathring{\mathcal{A}}$, the variance of $y_{\pi}$ is bounded away from $0$ and $+\infty$. We note
\begin{equation}
  \label{eq:condition-variance}
  \vmax^2 = \sup_{\pi \in C_\pi } \Var(y_\pi) < +\infty \quad \text{and} \quad \vmin^2 =  \inf_{\pi \in C_\pi} \Var(y_\pi) > 0.
\end{equation}
Similarly, since $\pi$ belongs to a bounded subset of a open interval, there exists a constant $\kappa>0$, such that $[\pi - \neighborsize, \pi + \neighborsize] \subset \mathring{\mathcal{A}}$ uniformly over all $\pi \in C_{\pi}$
\end{rem}

\begin{proposition} \label{prop:subexp_variables}
With the previous notations, if $\pi \in C_\pi$ and $y_{\pi} \sim \dens(., \pi)$, then $y_\pi$ is subexponential with parameters $(\vmax^2, \neighborsize^{-1})$.
\end{proposition}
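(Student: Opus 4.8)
The goal is to show that for $\pi \in C_\pi$ and $y_\pi \sim \varphi(\cdot,\pi)$ with $\varphi$ of the exponential-family form \eqref{eq:exp-distr}, the centered variable $y_\pi - \psi'(\pi)$ is subexponential with parameters $(\bar\sigma^2, \kappa^{-1})$; that is, its moment generating function satisfies $\Esp[e^{t(y_\pi - \psi'(\pi))}] \le e^{\bar\sigma^2 t^2/2}$ for all $|t| \le \kappa$. The natural route is a direct computation of the cumulant generating function, exploiting that tilting an exponential family member by $t$ simply shifts the canonical parameter from $\pi$ to $\pi + t$.

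First I would write, for $|t|$ small enough that $\pi + t \in \mathring{\mathcal{A}}$ (which holds whenever $|t| \le \kappa$, by the choice of $\kappa$ in the preceding remark, since then $[\pi-\kappa,\pi+\kappa] \subset \mathring{\mathcal{A}}$ uniformly in $\pi \in C_\pi$),
\begin{equation*}
\Esp\left[e^{t y_\pi}\right] = \int b(y) \exp\left((\pi+t) y - \psi(\pi)\right)\,dy = \exp\left(\psi(\pi+t) - \psi(\pi)\right),
\end{equation*}
using that $\int b(y)\exp((\pi+t)y - \psi(\pi+t))\,dy = 1$. Hence the cumulant generating function of the centered variable is
\begin{equation*}
K(t) := \log \Esp\left[e^{t(y_\pi - \psi'(\pi))}\right] = \psi(\pi+t) - \psi(\pi) - t\,\psi'(\pi).
\end{equation*}
A second-order Taylor expansion with integral remainder gives $K(t) = \int_0^t (t-s)\,\psi''(\pi+s)\,ds$. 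Since $\psi'' = \Var(y_{\pi+s})$ and $\pi + s$ stays in $\mathring{\mathcal{A}}$ for $|s| \le |t| \le \kappa$, I would bound $\psi''(\pi+s) \le \sup_{\pi' \in [\pi-\kappa,\pi+\kappa]} \Var(y_{\pi'})$. To make this bound clean and uniform, I would note that $C_\pi$ can be enlarged slightly to a compact $C_\pi' \subset \mathring{\mathcal{A}}$ containing the $\kappa$-neighborhood of $C_\pi$; then $\sup_{\pi' \in C_\pi'} \Var(y_{\pi'}) < \infty$, and this is the constant the authors denote $\bar\sigma^2$ in \eqref{eq:condition-variance} (with the understanding that the supremum is taken over a set large enough to absorb the tilting). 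This yields $K(t) \le \bar\sigma^2 t^2/2$ for all $|t| \le \kappa$, which is exactly the subexponential bound with parameters $(\bar\sigma^2, \kappa^{-1})$.

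The main subtlety — not really an obstacle, but the point requiring care — is the bookkeeping on which set the supremum defining $\bar\sigma^2$ must be taken over: the tilting argument forces us to control $\psi''$ not just on $C_\pi$ but on a $\kappa$-fattening of it, so one must either absorb this into the definition of $\bar\sigma^2$ or verify that the fattening still lies in $\mathring{\mathcal{A}}$ (which it does, by the very definition of $\kappa$). Everything else is a routine Taylor estimate; no concentration machinery or delicate analysis is needed here, since the exponential-family structure collapses the MGF to a difference of $\psi$ values. I would close by remarking that the same computation shows $y_\pi$ is in fact subgaussian on the interval $|t| \le \kappa$ and only genuinely ``subexponential'' because that interval is bounded — which is the natural price for $\psi$ being finite only on an open subset of $\Rbb$.
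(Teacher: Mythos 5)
Your proof is correct, but it takes a genuinely different and more informative route than the paper's. The paper disposes of this proposition in one line, by appealing to the equivalence of characterizations of sub-exponential variables (Theorem~\ref{thm:subexp_eq}, statement~2): since $\pi$ lies in a compact subset of $\mathring{\mathcal{A}}$, the moment generating function of $y_\pi$ is finite in a neighborhood of the origin, hence $y_\pi$ is sub-exponential. That argument, however, only yields the \emph{existence} of some parameters $(\nu^2, b^{-1})$; it does not identify them as $(\vmax^2, \neighborsize^{-1})$, nor does it address uniformity over $\pi \in C_\pi$. Your exponential-tilting computation does both: the identity $\log\Esp[e^{t y_\pi}] = \norm(\pi+t) - \norm(\pi)$ together with the integral-remainder Taylor bound pins down the variance proxy as a supremum of $\norm''$ and the range of validity as $|t| \le \neighborsize$, which is exactly the pair of constants the paper uses downstream (e.g.\ in the Bernstein-type bounds of Propositions~\ref{prop:conditional-likelihood-convergence} and~\ref{proposition:maxdiffz}). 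You are also right to flag the one genuine wrinkle: as written in Equation~\eqref{eq:condition-variance}, $\vmax^2$ is a supremum over $C_\pi$ only, whereas the tilting forces you to control $\norm''$ on the $\neighborsize$-fattening of $C_\pi$; the fattening is still a compact subset of $\mathring{\mathcal{A}}$ by the definition of $\neighborsize$, so the constant remains finite and one should simply read $\vmax^2$ as taken over that slightly larger set. This is a harmless adjustment of constants, but your proof makes explicit a point the paper's citation-style proof glosses over entirely.
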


\begin{proposition} \label{prop:subexp_mixture}
Considering $x=y_\pi r_{ij} + \lambda r_{ij}$ (we recall that $r_{ij} \sim \mathcal{B}(\rho)$), with $r_{ij}$ independant of $y_\pi$ and $\lambda \in \mathbb{R}$ bounded. There are non-negative numbers $\nu$ and $b$ such that $x$ is subexponential with parameters $(\nu^2, b^{-1})$.
\end{proposition}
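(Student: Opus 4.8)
The plan is to compute the moment generating function of the centred variable $x-\Esp[x]$ explicitly, exploiting the independence of $r_{ij}$ and $y_\pi$ and the fact that $r_{ij}\sim\mathcal{B}(\rho)$ has an explicit MGF. Write $x=r_{ij}(y_\pi+\lambda)$, set $\mu_\pi=\normp(\pi)=\Esp[y_\pi]$ and $a=\mu_\pi+\lambda$, so that $\Esp[x]=\rho a$. By independence, for any $t\in\Rbb$,
\[
\Esp\!\big[e^{t(x-\rho a)}\big] = e^{-t\rho a}\Big((1-\rho)+\rho\,\Esp\big[e^{t(y_\pi+\lambda)}\big]\Big) = e^{-t\rho a}\Big(1+\rho\big(\Esp[e^{t(y_\pi+\lambda)}]-1\big)\Big),
\]
and the goal is to bound this by $e^{\nu^2 t^2/2}$ on a symmetric interval $|t|\le b$.

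Next I would control the inner MGF using Proposition~\ref{prop:subexp_variables}: for $|t|\le\neighborsize$ one has $\Esp[e^{ty_\pi}]=e^{t\mu_\pi}\Esp[e^{t(y_\pi-\mu_\pi)}]\le\exp\!\big(t\mu_\pi+\tfrac12\vmax^2 t^2\big)$, hence $\Esp[e^{t(y_\pi+\lambda)}]\le e^{u}$ with $u:=ta+\tfrac12\vmax^2 t^2$. Taking logarithms, using $\log(1+\rho(e^u-1))\le\rho(e^u-1)$ and the elementary inequality $e^u-1\le u+u^2$ valid for $|u|\le 1$, I get
\[
\log\Esp\!\big[e^{t(x-\rho a)}\big] \le -t\rho a+\rho u+\rho u^2 = \tfrac12\rho\vmax^2 t^2+\rho u^2,
\]
because the linear-in-$t$ part of $\rho u$ cancels \emph{exactly} against the centring term $-t\rho a$. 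Since $u^2\le 2a^2 t^2+\tfrac12\vmax^4 t^4$ and $t^4\le b^2 t^2$ on $|t|\le b$, the right-hand side is at most $\tfrac12 t^2\big(\vmax^2+4a^2+\vmax^4 b^2\big)$ (using $\rho\le 1$), which has the required form $\tfrac12\nu^2 t^2$.

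It then remains to fix the constants. I need $|t|\le\neighborsize$ for Proposition~\ref{prop:subexp_variables} and $|u|\le 1$; since $|u|\le b|a|+\tfrac12\vmax^2 b^2$ on $|t|\le b$, choosing $b\le\min\big(\neighborsize,\,1/(2M),\,1/\vmax\big)$, where $M$ is a uniform bound on $|\mu_\pi+\lambda|$, guarantees both, and then $\nu^2:=\vmax^2+4M^2+\vmax^4 b^2$ works. The only point that makes the statement meaningful as written is uniformity in $\pi$ (and in the bounded shift $\lambda$): $\vmax$ and $\neighborsize$ are uniform over $C_\pi$ by construction, $\mu_\pi=\normp(\pi)$ is bounded on the compact $C_\pi$ by smoothness of $\norm$ (Assumption~$A_1$), and $\lambda$ is bounded, so $M$ is a finite constant independent of $\pi\in C_\pi$ and of $\lambda$; hence $b$ and $\nu$ can be chosen independently of them. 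I expect no serious obstacle beyond this bookkeeping of constants; the single inequality worth double-checking is $e^u-1\le u+u^2$ on $[-1,1]$, which follows from $\sum_{k\ge 2}|u|^{k-2}/k!\le e-2<1$.
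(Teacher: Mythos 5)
Your proof is correct, but it is genuinely more constructive than the paper's. The paper disposes of this proposition in one line by invoking the equivalence of characterizations of sub-exponential variables (Theorem~\ref{thm:subexp_eq}, statement~2 $\Rightarrow$ statement~1): since $x = r_{ij}(y_\pi+\lambda)$ with $r_{ij}$ bounded and $y_\pi$ drawn from an exponential family with $\pi$ in a compact subset of $\mathring{\mathcal{A}}$, its moment generating function is finite in a neighbourhood of the origin, hence $x$ is sub-exponential for \emph{some} parameters. Your argument instead computes the centred MGF explicitly, $\Esp[e^{t(x-\rho a)}]=e^{-t\rho a}\bigl(1+\rho(\Esp[e^{t(y_\pi+\lambda)}]-1)\bigr)$, and exploits the exact cancellation of the linear term $\rho t a$ against the centring to reduce everything to second order; all the intermediate inequalities ($\log(1+w)\le w$, $e^u-1\le u+u^2$ on $|u|\le 1$, the constraint $b\le\min(\neighborsize,1/(2M),1/\vmax)$) check out. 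What your route buys is explicit parameters $\nu^2=\vmax^2+4M^2+\vmax^4b^2$ that are \emph{uniform} over $\pi\in C_\pi$ and over the bounded shift $\lambda$ --- a point the paper's soft argument does not address, even though this uniformity is exactly what is used downstream (e.g.\ in Proposition~\ref{prop:conditional-likelihood-convergence}, where all the summands $W_{\kk\kp\el\lp}$ are assigned the same parameters $(\nu^2,1/b)$). The qualitative equivalence theorem, applied pointwise, only yields parameters depending on $(\pi,\lambda)$ and would need a separate compactness argument to be made uniform; your bookkeeping of constants supplies that directly. The price is length; the paper's citation is shorter but, strictly speaking, proves slightly less than what is later needed.
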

\begin{proof}
These results derive directly from theorem \ref{thm:subexp_eq} (statement 2.).
\end{proof}

\subsection{Symmetry}
\label{sec:definitions}

We now introduce the concepts of assignments and parameter symmetries, that must be accounted for when studying the asymptotic properties of the MLE. Complications stemming from symmetries are related to but no equivalent to the problem of label-switching in mixture models. 

\begin{dof}[permutation]
  \label{def:permutation}
  Let $s$ be a permutation on $\{1,\dots,\Q\}$. If $\boldsymbol{A}$ is a matrix with $\Q$ columns and $n$ rows, we define $\boldsymbol{A}^s$ as the matrix obtained by permuting the columns of $\boldsymbol{A}$ according to $s$, \emph{i.e.} for any row $\ii$ and column $\kk$ of $\boldsymbol{A}$, ${A}^s_{\ii \kk} = A_{\ii s(\kk)}$. If $\boldsymbol{C}$ is a matrix with $\Q$ rows and $\Q$ columns, $\boldsymbol{C}^{s}$ is defined similarly:
  \begin{equation*}
    \boldsymbol{A}^s = \left( A_{\ii s^{}(\kk)} \right)_{\ii,\kk} \quad \boldsymbol{C}^{s} = \left( C_{s^{}(\kk) s^{}(\el)} \right)_{\kk,\el}
  \end{equation*}
\end{dof}

\begin{dof}[equivalence]
  \label{def:equivalence}
  We define the following equivalence relationships:
  \begin{itemize}
  \item Two assignments $\bz$ and $\bz'$ are \emph{equivalent}, noted $\sim$, if they are equal up to label permutation, \emph{i.e.} there exists a permutation $s$ such that $\bz' = \bz^s$.
  \item Two parameters $\btheta$ and $\btheta'$ are \emph{equivalent}, noted $\sim$, if they are equal up to label permutation, \emph{i.e.} there exists a permutation $s$ such that $(\bpi^s,\bal^{s}) = (\bpi', \bal')$. 
  \item  $(\btheta, \bz)$ and $(\btheta', \bz')$ are \emph{equivalent}, noted $\sim$, if they are equal up to label permutation on $\bal$ and $\bz$, \emph{i.e.} there exists a permutation $s$ such that $(\bal^{s}, \bz^s) = (\bal', \bz')$. This is \emph{label-switching}.
  \end{itemize}
\end{dof}

\begin{dof}[symmetry]
  \label{def:symmetry}
  We say that the parameter $\btheta$ \emph{exhibits symmetry for the permutation} $s$ if
  \begin{equation*}
    (\bpi^s, \bal^{s}) = (\bpi, \bal).
  \end{equation*}
  $\btheta$ \emph{exhibits symmetry} if it exhibits symmetry for any non trivial permutations $s$. Finally the set of permutations for which $\btheta$ exhibits symmetry is noted $\Symmetric(\btheta)$.
\end{dof}

\begin{rem}
  The set of parameters that exhibit symmetry is a manifold of null Lebesgue measure in $\bTheta$.  The notion of symmetry allows us to deal with a notion of non-identifiability of the class labels that is subtler than and different from label switching. More precisely
  \begin{eqnarray*}
   \textrm{Label switching is when :}  \quad p(\byo,\bz,\btheta) &=& p(\byo,\bz^s,\btheta^s), \ \theta \neq \theta^s \ \forall s \\
   \textrm{Symmetry is when :}  \quad p(\byo,\bz,\btheta) &=& p(\byo,\bz^s,\btheta), \ \forall s \in \textrm{Sym}(\theta)
  \end{eqnarray*}
  In particular, in label-switching, $\bz$ and $\bz^s$ have the same likelihood but under equivalent yet different parameters $\btheta$s. In contrast, in the presence of symmetry, $\bz$ and $\bz^s$ have exactly the same likelihood under $\btheta$. This implies in particular that the posterior $p(\bz|\byo,\btheta)$ can not concentrate on a single assignment. This is instrumental for Proposition~\ref{prop:equivalent-configurations-profile-likelihood}.
\end{rem}

{
\begin{example} In this example we illustrate what $\textrm{Sym}(\theta)$ and its cardinal can be in a simple case. Consider a network with $n$ nodes, 
\begin{equation*}
\alpha=(1/6, 1/6, 2/3), \ \text{and} \ \ \pi=\left(\begin{array}{ccc}0 & 0.7 & 0.2 \\0.7 & 0 & 0.2 \\0.2 & 0.2 & 0.2\end{array}\right).
\end{equation*}
As a consequences the two following assignments
\begin{eqnarray*}
z_1&=&(\underbrace{1,\cdots,1}_{n_1},\underbrace{2,\cdots,2}_{n_2},\underbrace{3,\cdots,3}_{n_3}) \\ z_2&=&(\underbrace{2,\cdots,2}_{n_2},\underbrace{1,\cdots,1}_{n_1},\underbrace{3,\cdots,3}_{n_3})
\end{eqnarray*}
belongs to $\textrm{Sym}(\theta) = \{ Id, [1, 2]\}$. Indeed they are the only assignments belongings to $\textrm{Sym}(\theta)$, so, in this particular case $\#\textrm{Sym}(\theta)=2$.
\end{example}
}
The issue of symmetry forces us to use a notion of distance between assignment that is invariant to label permutation.

\begin{dof}[distance]
  \label{def:equivalence-distance}
  We define the following distance, up to equivalence, between configurations $\bz$ and $\bzs$:
    \begin{equation*}
    \|\bz - \bzs\|_{0, \sim} = \inf_{\bz' \sim \bz} \|\bz' - \bzs\|_0
    \end{equation*}
    where, for all matrix $\bz$, we use the Hamming norm $\left\|\cdot\right\|_{0}$ defined by
\[\left\|\bz\right\|_{0}=\frac{1}{2}\sum_{\ii,\kk}{\mathds{1}{\left\{\zik\neq0\right\}}}.\]
\end{dof}
  
\begin{dof}[Set of local assignments]
  \label{prop:small-deviations-profile-likelihood}
  We note $S(\bzs, r)$ the set of configurations that have a representative (for $\sim$) within relative radius $r$ of $\bzs$:
  \begin{equation*}
    S(\bzs, r) = \left\{ \bz : \|\bz - \bzs\|_{0, \sim} \leq r \n \right\}
  \end{equation*}
\end{dof}


\subsection{Other definitions}

We finally introduce a few useful notions that will be instrumental in the proofs. The first is ``regular'' assignments, for which each group has ``enough'' nodes:
\begin{dof}[$c$-regular assignments]
  \label{def:regular}
  Let $\bz \in \mcZ$. For any $c > 0$, we say that $\bz$ is c-\emph{regular} if
  \begin{equation}
    \label{eq:regular-configuration}
    \min_{\kk} \zsumk \geq {c\n} .
  \end{equation}
\end{dof}

Class distinctness $\delta(\bal)$ captures the differences between groups: lower values of $\delta(\bal)$ means that at least two classes have very similar connectivity profiles. $\delta(\bal)$ is intrisically linked to the convergence rate of several estimates. 
\begin{dof}[class distinctness]
  \label{def:group-distinctness}
  For $\btheta = (\bpi, \bal) \in \bTheta$. We define:

  \begin{equation*}
    \delta(\bal) = \min_{\kk, \kp} \max_{\el} \KL(\alkl, \al_{\kp\el})
  \end{equation*}
  
  with $\KL(\pi,\pi') = \Esp_{\pi}[\log(\dens(Y, \pi)/\dens(Y, \pi'))]=\normp(\pi) (\pi - \pi') + \norm(\pi') - \norm(\pi)$ the Kullback divergence between $\dens(., \pi)$ and $\dens(., \pi')$, when $\dens$ comes from an exponential family.
\end{dof}
\begin{rem}
Since all $\bal$ have distinct rows and columns, $\delta(\bal) > 0$.
\end{rem}

Finally, the confusion matrix allows to compare groups between assignments:
\begin{dof}[confusion matrix]
  \label{def:confusion}
  For given assignments $\bz$ and $\bzs$, we define the \emph{confusion matrix} between $\bz$ and $\bzs$, noted $\RQbz$, as follows:
  \begin{equation}
    \label{eq:confusion-matrix}
    \RQbz_{q q^{\prime}} = \frac{1}{\n} \sum_{\ii} \z^\vrai_{\ii q} \z_{\ii q^{\prime}}
  \end{equation}
\end{dof}

\begin{dof}
For more conciseness, we define
\begin{equation}
 \Sal  = (\Salkl)_{\kk\el} = \left( \normp(\alskl) \right)_{\kk\el}
\end{equation}
\end{dof}



\section{Complete-observed Model} \label{sec:complete_model}

{Hereafter and in the rest of the text, we use the term "complete" to say that true assignments $\bzs$ are known, and "observed" to say that only some dyads are observed.}
In the following we study the asymptotic properties of the complete-observed data model. 
\begin{proposition}
\label{prop:isolated_nodes}
Under random dyad sampling, defining $N_i = \sum_{j}r_{ij}$ and $\Omega_{0,n} = \cap_{i=1}^{n} \{ N_i \geqslant 1 \}$ the set of nodes with at least one dyad observed. Then 
$$\mathbb{P}\left(\underset{n\to+\infty}{\lim}\Omega_{0,n}\right) = 1.$$
\label{prop:Omega}
\end{proposition}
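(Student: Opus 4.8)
The plan is to show that, with probability tending to one, every node has at least one observed dyad, i.e. the complement event $\Omega_{0,n}^c = \cup_{i=1}^n \{N_i = 0\}$ has vanishing probability. Since $N_i = \sum_{j \neq i} r_{ij}$ is a sum of $n-1$ independent Bernoulli$(\rho)$ variables (under random dyad sampling the $r_{ij}$ are i.i.d.), we have $\Pbb(N_i = 0) = (1-\rho)^{n-1}$. A union bound then gives
\begin{equation*}
\Pbb(\Omega_{0,n}^c) \leq \sum_{i=1}^n \Pbb(N_i = 0) = n(1-\rho)^{n-1} \leq n e^{-\rho(n-1)}.
\end{equation*}
By assumption $A_0$, $\rho \gg \log(n)/n$, so $\rho(n-1) = \rho n (1 - 1/n) \gg \log(n)$, hence $n e^{-\rho(n-1)} = \exp(\log n - \rho(n-1)) \to 0$. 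This shows $\Pbb(\Omega_{0,n}) \to 1$.

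To upgrade this to the almost-sure statement $\Pbb(\lim_{n\to\infty} \Omega_{0,n}) = 1$ — reading $\lim \Omega_{0,n}$ as $\liminf_n \Omega_{0,n} = \cup_N \cap_{n \geq N} \Omega_{0,n}$, the event that $\Omega_{0,n}$ holds for all large enough $n$ — I would invoke the first Borel--Cantelli lemma applied to the complements $\Omega_{0,n}^c$. It suffices to check that $\sum_n \Pbb(\Omega_{0,n}^c) < \infty$, which follows from the bound $\Pbb(\Omega_{0,n}^c) \leq \exp(\log n - \rho(n-1))$: since $\rho n / \log n \to \infty$, for $n$ large we have $\rho(n-1) \geq 2\log n$, so $\Pbb(\Omega_{0,n}^c) \leq n^{-2}$ eventually, whose series converges. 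Borel--Cantelli then gives $\Pbb(\Omega_{0,n}^c \text{ i.o.}) = 0$, equivalently $\Pbb(\liminf_n \Omega_{0,n}) = 1$. (A subtlety here is that the probability spaces for different $n$ must be coupled — one takes a single probability space carrying an infinite triangular array of Bernoulli variables — so that the event $\liminf_n \Omega_{0,n}$ is well defined; this is the implicit setup.)

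The only real point requiring care is the quantitative step relating $A_0$ to summability: the hypothesis $\rho \gg \log n / n$ means $\rho n / \log n \to \infty$, which indeed forces $\rho(n-1) - \log n \to +\infty$ and moreover $\rho(n-1) - 2\log n \to +\infty$, giving the $n^{-2}$ tail needed for Borel--Cantelli. I expect no genuine obstacle; the inequality $1 - \rho \leq e^{-\rho}$ and a union bound do all the work, and the main thing to be careful about is stating the $\liminf$ interpretation of ``$\lim \Omega_{0,n}$'' correctly and noting the common-probability-space convention.
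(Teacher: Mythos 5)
Your proof is correct and follows essentially the same route as the paper's: a union bound giving $\mathbb{P}(\overline{\Omega_{0,n}}) \leq n(1-\rho)^{n-1}$, followed by the first Borel--Cantelli lemma applied to the complements and the identification of the limit event with $\liminf_n \Omega_{0,n}$. If anything, you are slightly more careful than the paper, which asserts summability of $\sum_n \mathbb{P}(\overline{\Omega_{0,n}})$ without spelling out that it relies on $A_0$ ($\rho \gg \log(n)/n$) to get an eventual $n^{-2}$ bound; your explicit verification of that step is a welcome addition.
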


\begin{proof}
This proposition is a direct consequence of Borel-Cantelli's theorem. Details are available in appendix \ref{appendix:technical_results}.
\end{proof}

\begin{rem}
This result shows that, with high probability, the network has no unobserved node. In the remainder, we work conditionnally on $\Omega_{0,n}$.
\end{rem}

Let $\widehat{\btheta}_{c}=\left(\hbpi,\hbal\right)$ be the MLE of $\btheta$ in the complete-observed data model. Simple manipulations of Equation~\eqref{eq:log-vraisemblance-complete} yield:

\begin{equation}
  \label{eq:mle-complete-likelihood}
  \begin{aligned}
    \hpi_{q}  = \pizq = \frac{\zsumq}{\n}&& \\
    \hyqlz = \frac{\sum_{\ii\neq \jj} \yij \rrij \ziq \zjl}{\sum_{\ii \neq \jj} \rrij \ziq \zjl} & \quad
    \hal_{q\el}  = \pizql = \normpm \left(  \hyqlz \right)&
  \end{aligned}
\end{equation}

\begin{proposition}
  \label{prop:mle-asymptotic-normality}
  Let $\Sigma_{\bpis} = \Diag(\bpis) - \bpis\left(\bpis\right)\transpose$.Then $\Sigma_{\bpis}$ is semi-definite positive, of rank $\mathcal{Q}-1$, and $\widehat{\bpi}$ is asymptotically normal:
  \begin{equation}
    \label{eq:mle-proportion-asymptotic-normality}
    \sqrt{\n}\left( \hat{\bpi}\left(\bzs\right) - \bpis \right) \xrightarrow[\n \to \infty]{\mathcal{D}} \mathcal{N}(0, \Sigma_{\bpis})
  \end{equation}
Similarly, let $V(\bals)$ be the matrix defined by $[V(\bals)]_{q \el} = 1/\normp'(\pi^\vrai_{q\el})$ and\\  $\Sigma_{\bals} = \rho^{-1}\Diag^{-1}(\bpis) V(\bals) \Diag^{-1}(\bpis)$. Then the estimates $\hat{\pi}_{q\ell}(\bzs)$ are independent and asymptotically Gaussian with limit distribution:
  \begin{equation}
    \label{eq:mle-parameters-asymptotic-normality}
    \sqrt{\n(\n-1)}\;(\hal_{q\el}\left(\bzs\right) - \pi^\vrai_{q\el}) \xrightarrow[\n \to \infty]{\mathcal{D}}  \mathcal{N}(0, \Sigma_{\bals, q \el})\;\;\; \hbox{for all  } q, \el
  \end{equation}
\end{proposition}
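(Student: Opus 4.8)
The plan is to use the explicit formulas for the complete-observed MLE in \eqref{eq:mle-complete-likelihood} and reduce both claims to classical central limit theorems, taken conditionally on $\bzs$ and on the sampling matrix $\br$.

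For the block proportions, I would note that $\hpi_q(\bzs) = \zs_{+q}/\n = \n^{-1}\sum_i \zs_{iq}$, where $(\zs_i)_{1\le i\le \n}$ are i.i.d.\ with the multinomial $\mathcal{M}(1,\bpis)$ distribution. Hence $\sqrt{\n}(\hbpi(\bzs) - \bpis)$ is a normalized sum of i.i.d.\ centered vectors whose common covariance is $\Esp[\zs_1(\zs_1)\transpose] - \bpis(\bpis)\transpose = \Diag(\bpis) - \bpis(\bpis)\transpose = \Sigma_{\bpis}$ (the off-diagonal entries of $\Esp[\zs_1(\zs_1)\transpose]$ vanish because a node lies in a single block), and the multivariate CLT gives \eqref{eq:mle-proportion-asymptotic-normality}. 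Being a covariance matrix, $\Sigma_{\bpis}$ is positive semidefinite; the vector $\mathbf 1$ lies in its kernel since $\Diag(\bpis)\mathbf 1 = \bpis = \bpis(\bpis\transpose\mathbf 1)$, and because every coordinate of $\bpis$ is at least $c > 0$ by $A_1$ the equation $\Diag(\bpis)v = \bpis(\bpis\transpose v)$ forces $v$ constant, so the kernel is exactly $\mathrm{span}(\mathbf 1)$ and $\mathrm{rank}(\Sigma_{\bpis}) = \mathcal{Q}-1$.

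For the emission parameters I would fix $(q,\el)$ and set $N_{q\el} = \sum_{i\neq j} r_{ij}\zs_{iq}\zs_{j\el}$, the number of observed dyads joining a node of true block $q$ to a node of true block $\el$. Conditionally on $(\bzs,\br)$, the family $\{y_{ij} : \zs_{iq}\zs_{j\el}r_{ij} = 1\}$ consists of $N_{q\el}$ i.i.d.\ variables with law $\dens(\cdot,\pi^\vrai_{q\el})$, mean $\normp(\pi^\vrai_{q\el})$ and variance $\normp'(\pi^\vrai_{q\el}) \in [\vmin^2,\vmax^2]$, and $\hy_{q\el}(\bzs)$ is precisely their empirical mean. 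Since $N_{q\el} \to +\infty$ almost surely (see below), a Berry--Esseen bound shows the conditional law of $\sqrt{N_{q\el}}\bigl(\hy_{q\el}(\bzs) - \normp(\pi^\vrai_{q\el})\bigr)$ given $(\bzs,\br)$ converges weakly, almost surely, to $\mathcal N\bigl(0,\normp'(\pi^\vrai_{q\el})\bigr)$, hence so does its unconditional law. Applying the delta method to the smooth map $\normpm$, whose derivative at $\normp(\pi)$ equals $1/\normp'(\pi)$, turns this into $\sqrt{N_{q\el}}\bigl(\hal_{q\el}(\bzs) - \pi^\vrai_{q\el}\bigr) \xrightarrow{\mathcal D} \mathcal N\bigl(0,1/\normp'(\pi^\vrai_{q\el})\bigr)$. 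Finally the strong law of large numbers applied to $(\zs_i)$ and $(r_{ij})$ gives $N_{q\el}/[\n(\n-1)] \to \rho\,[\bpis]_q[\bpis]_\el$ a.s., so Slutsky's lemma yields
\[
 \sqrt{\n(\n-1)}\,\bigl(\hal_{q\el}(\bzs) - \pi^\vrai_{q\el}\bigr) \xrightarrow[\n\to\infty]{\mathcal D} \mathcal N\!\left(0,\ \frac{1}{\rho\,[\bpis]_q[\bpis]_\el\,\normp'(\pi^\vrai_{q\el})}\right),
\]
which is exactly $\mathcal N(0,\Sigma_{\bals,q\el})$ because $[\rho^{-1}\Diag^{-1}(\bpis)V(\bals)\Diag^{-1}(\bpis)]_{q\el} = \rho^{-1}([\bpis]_q[\bpis]_\el)^{-1}/\normp'(\pi^\vrai_{q\el})$. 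For distinct index pairs the $\hal_{q\el}(\bzs)$ are functions of disjoint sub-families of $(y_{ij})$, hence conditionally independent given $(\bzs,\br)$; carrying the argument out jointly over all $(q,\el)$ with the multivariate conditional CLT gives the claimed asymptotic independence together with the diagonal limiting covariance.

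The hard part is the passage from the conditional limit to the unconditional statement, which rests on two facts. First, $N_{q\el}$ must tend to infinity and concentrate around $\rho\,[\bpis]_q[\bpis]_\el\,\n(\n-1)$: this follows from a Bernstein inequality for the i.i.d.\ $\mathrm{Bernoulli}(\rho)$ entries of $\br$ combined with $A_0$, which guarantees $\rho\,\n^2 \gg \n\log\n \to +\infty$, in the spirit of Proposition~\ref{prop:isolated_nodes}. Second, the random number of summands must be handled: since those summands are i.i.d.\ draws from the \emph{fixed} sub-exponential law $\dens(\cdot,\pi^\vrai_{q\el})$ (Proposition~\ref{prop:subexp_variables}), uniform $2+\varepsilon$ moments are automatic and a random-index CLT (Anscombe's theorem), or equivalently the Berry--Esseen argument above, applies cleanly. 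I would also flag that, since $\rho\to 0$ under $A_0$, the matrix $\Sigma_{\bals}$ depends on $\n$, so the displayed convergence should be read after renormalising by $\Sigma_{\bals,q\el}^{-1/2}\sqrt{\n(\n-1)}$, equivalently as $\sqrt{\rho\,\n(\n-1)}\,(\hal_{q\el}(\bzs)-\pi^\vrai_{q\el}) \xrightarrow{\mathcal D} \mathcal N\bigl(0,1/([\bpis]_q[\bpis]_\el\,\normp'(\pi^\vrai_{q\el}))\bigr)$. Everything else is routine bookkeeping with Slutsky's lemma and the delta method.
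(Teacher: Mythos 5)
Your proof is correct and follows the same skeleton as the paper's: a multivariate CLT for the multinomial proportions, and, for the connectivity parameters, an empirical mean over a random number of observed dyads combined with the convergence $N_{q\el}/[\n(\n-1)] \to \rho\,\pii^\vrai_q\pii^\vrai_\el$, Slutsky's lemma and the delta method. The one substantive difference is how the random number of summands is handled: the paper invokes a central limit theorem for random sums of random variables (Shanthikumar and Sumita, 1984) together with its Lemma~\ref{lem:convdenom}, which controls $\frac{1}{\n(\n-1)}\sum_{i\neq j}r_{ij}z^\vrai_{iq}z^\vrai_{j\el}$ via a Hoeffding decomposition for U-statistics; you instead condition on $(\bzs,\br)$, apply Berry--Esseen (equivalently Anscombe's theorem) to get the conditional limit almost surely, and then deconditional­ize. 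Both routes are valid; yours is more self-contained and makes the conditional-independence structure behind the claimed independence of the $\hal_{q\el}(\bzs)$ explicit, while the paper's is shorter at the cost of an external citation. Two further points in your favour: you actually prove the rank-$(\mathcal{Q}-1)$ claim for $\Sigma_{\bpis}$, which the paper asserts without argument, and your remark that under $A_0$ (where $\rho\to 0$) the statement should be read after renormalising by $\sqrt{\rho}$ is a fair caveat --- the paper's Discussion confirms the proposition is meant in the fixed-$\rho$ reading. The only loose end is your appeal to ``the strong law of large numbers'' for $N_{q\el}$, which is a sum over pairs rather than of i.i.d.\ terms; your subsequent Bernstein argument conditional on $\bzs$ (where the $r_{ij}$ are genuinely i.i.d.) closes that gap, so it is cosmetic rather than a real defect.
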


\begin{proof}
The proof is postponed to appendix \ref{appendix:technical_results}. The first part is a direct application of central limit theorem for i.i.d. variables and the second part relies on a variant of the central limit theorem for random sums of random variables. 
\end{proof}

\begin{rem} 
The main differences with \cite{bickel2013asymptotic} are (i) the scaling of $\Sigma_{\bals}$ as $\rho^{-1}$ and (ii) the need for a central limit theorem for random sums of random variables, as the sums involved in \eqref{eq:mle-complete-likelihood} are over a random number of terms. 
\end{rem}

\begin{proposition}[Local asymptotic normality]\label{prop:LocalAsymp}
Let $\Lcs$ be the complete likelihood function defined on $\bTheta$ by $\Lcs\left(\bpi,\bal\right)=\log\prob\left(\by^o,\bzs;\btheta\right)$. For any $s$ and $u$ in a compact set, we have:
\begin{eqnarray*}
\Lcs\left(\bpis+\frac{s}{\sqrt{\n}},\bals+\frac{u}{\sqrt{\n(\n-1)}}\right)
&=&\Lcs\left(\bthetas\right) + s\transpose\bY_{\bpis}+ \text{Tr}(u\transpose\bY_{\bals})\\
& -&\left(\frac{1}{2}s\transpose\Sig_{\bpis}s+\frac{1}{2} \text{Tr}\left((u \odot u)\transpose\Sig_{\bals}\right)\right)\\
&+&o_{P}(1)\\
\end{eqnarray*}
where $\odot$ denote the Hadamard product of two matrices (element-wise product) and $\Sig_{\bpis}$ and $\Sig_{\bals}$ are defined in Proposition~\ref{prop:mle-asymptotic-normality}. $\bY_{\bpis}$ is asymptotically Gaussian with zero mean and variance matrix $\Sig_{\bpis}$. $\bY_{\bals}$ is a random matrix with independent entries that are asymptotically gaussian zero mean and variance $\Sig_{\bals}$.
\end{proposition}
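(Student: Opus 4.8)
The plan is to follow the classical derivation of local asymptotic normality: decompose $\Lcs$ exactly along the block structure, Taylor‑expand each piece at its local rate, identify the linear and quadratic coefficients, and control the third‑order remainder uniformly over the compact set in which $s$ and $u$ live. Inserting $\bz = \bzs$ into \eqref{eq:log-vraisemblance-complete} and using the exponential form \eqref{eq:exp-distr} gives
\[
\Lcs(\bpi,\bal) = \sum_{\kk} \zssumk \log\pik \;+\; \sum_{\kk,\el}\bigl( \alkl\, T_{\kk\el} - \norm(\alkl)\, N_{\kk\el} \bigr) \;+\; \sum_{\ii\neq\jj}\rrij\log b(\yij),
\]
where $N_{\kk\el} = \sum_{\ii\neq\jj}\rrij\zs_{\ii\kk}\zs_{\jj\el}$, $T_{\kk\el} = \sum_{\ii\neq\jj}\yij\rrij\zs_{\ii\kk}\zs_{\jj\el}$, and the last term is free of $\btheta$. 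The proportion block depends only on $\bpi$ and the connectivity block only on $\bal$, so the two expansions decouple; throughout I would work on $\Om_{0,\n}$ (Proposition~\ref{prop:isolated_nodes}) and on the event, of probability tending to $1$, on which $\bzs$ is $c$-regular, so that $\zssumk \geq c\n$ for every $\kk$.

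For the proportion block I would parametrise the perturbation canonically, $\log\pik = \log\pisk + s_\kk/\sqrt{\n}$, and impose $\sum_\kk\pik = 1$, which to second order forces $\sqrt{\n}\sum_\kk\pisk s_\kk = -\tfrac12\sum_\kk\pisk s_\kk^2 + o(1)$. Substituting and writing $\zssumk/\n = \pisk + (\pizsk - \pisk)$, this block becomes $\sum_\kk\zssumk\log\pisk + s\transpose\bY_{\bpis} - \tfrac12 s\transpose\Sig_{\bpis}s + o_P(1)$ with $\bY_{\bpis} = \sqrt{\n}\,(\widehat{\bpi}(\bzs) - \bpis)$; here I would use that $s\transpose\Diag(\bpis)s = s\transpose\Sig_{\bpis}s$ on the tangent space of the simplex, and that $\bY_{\bpis}$ is asymptotically $\mathcal{N}(0,\Sig_{\bpis})$, which is exactly \eqref{eq:mle-proportion-asymptotic-normality}. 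The logarithm's Taylor remainder is $o_P(1)$ uniformly for $s$ in a compact set since $\pisk \geq c$ and $\zssumk/\n \to \pisk$ by the law of large numbers.

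For the connectivity block I would expand, for each pair $(\kk,\el)$, the smooth function $\alkl \mapsto \alkl T_{\kk\el} - \norm(\alkl)N_{\kk\el}$ around $\alskl$ with increment $u_{\kk\el}/\sqrt{\n(\n-1)}$: the linear coefficient is $(\n(\n-1))^{-1/2}\bigl(T_{\kk\el} - \normp(\alskl)N_{\kk\el}\bigr) = (\n(\n-1))^{-1/2}\sum_{\ii\neq\jj}\rrij\bigl(\yij - \Salkl\bigr)\zs_{\ii\kk}\zs_{\jj\el}$, a centered random sum of independent subexponential increments (Propositions~\ref{prop:subexp_variables}--\ref{prop:subexp_mixture}); the quadratic coefficient is $-\tfrac12(\n(\n-1))^{-1}\normp'(\alskl)N_{\kk\el}$; and the cubic remainder is at most $C\,|u_{\kk\el}|^3(\n(\n-1))^{-3/2}N_{\kk\el}$, where $C$ bounds $|\normp''|$ on $[\alskl - \neighborsize, \alskl + \neighborsize] \subset \mathring{\mathcal{A}}$ (assumption $A_1$). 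Applying the subexponential concentration bounds to the $\bigO(\Q^2)$ sums $N_{\kk\el}$ and $T_{\kk\el}$ gives $N_{\kk\el}/(\rho\,\n(\n-1)) \to \pisk\,\pis_\el$ in probability, so the quadratic coefficients converge and the cubic remainder is $o_P(1)$ uniformly in $u$ over a compact; and a central limit theorem for random sums of random variables — the same device used in the second half of Proposition~\ref{prop:mle-asymptotic-normality} — would show that the array of linear coefficients, $\bY_{\bals}$, has asymptotically independent Gaussian entries with the variance structure recorded in \eqref{eq:mle-parameters-asymptotic-normality}. Collecting the two contributions gives $\text{Tr}(u\transpose\bY_{\bals}) - \tfrac12\text{Tr}\bigl((u\odot u)\transpose\Sig_{\bals}\bigr) + o_P(1)$, and summing with the proportion block and the $\btheta$-free constant yields the stated expansion.

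I expect the main obstacle to be the connectivity block, and specifically the uniformisation over the compact set in the regime $\rho \to 0$: both the score term and the Fisher‑information term of that block are sums over the \emph{random} number $N_{\kk\el}$ of observed dyads, so an ordinary CLT/LLN does not suffice and one needs a random‑sum central limit theorem together with concentration. It is here that assumption $A_0$ ($\rho \gg \log\n/\n$) does the work — it guarantees $\min_{\kk,\el} N_{\kk\el} \gg \log\n$ with high probability, which is what makes the subexponential tail bounds summable over all $\bigO(\Q^2)$ blocks and the normalised statistics concentrate despite $\rho$ vanishing.
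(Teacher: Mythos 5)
Your proposal follows essentially the same route as the paper's proof: a second-order Taylor expansion of $\Lcs$ at the local rates $\n^{-1/2}$ and $(\n(\n-1))^{-1/2}$, with the score terms handled by a (random-sum) central limit theorem and the Hessian terms by concentration/law of large numbers — you simply spell out the details that the paper dismisses with ``by inspection''. Your canonical reparametrisation of the proportion perturbation together with the simplex constraint is a sensible (arguably necessary) refinement, since it is what makes the quadratic form come out as $s\transpose\Sig_{\bpis}s$ rather than $s\transpose\Diag^{-1}(\bpis)s$.
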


\begin{proof}
This result is based on a Taylor expansion of $\Lcs$ in a neighborhood of $(\bpis,\bals)$. Details are available in appendix \ref{appendix:technical_results}.
\end{proof}



\section{Main Result} \label{sec:main}
Our main result compares the observed likelihood ratio $\prob(\by^o; \btheta)/\prob(\by^o; \bthetas)$ with the complete observed likelihood  $\prob(\by^o, \bzs; \btheta')/\prob(\by^o, \bzs; \bthetas)$ to show that they have the same argmax. To ease the comparison, we work only on the high probablity set $\Om_1$ of $c/2$-regular configurations, \emph{i.e.} that have $\Omega(n)$ nodes in each group as defined in Section~\ref{sec:statistical-framework}, 

\begin{proposition}
  \label{cor:prob-regular-configurations-star}
  Define $\mcZ_1$ as the subset of $\mcZ$ made of $c/2$-regular assignments, with $c$ defined in assumption $H_1$. Note $\Om_1$ the event $\{ \bzs
  \in \mcZ_1 \}$, then:
  \begin{equation*}
    \Prob_{\bthetas}\left( \bar{\Om}_1 \right) \leq \q \exp\left( -\frac{\n c^2}{2}\right).
  \end{equation*}
\end{proposition}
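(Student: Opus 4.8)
The statement is a finite‑$\n$ concentration bound, so the plan is entirely elementary and requires no asymptotics. Under $\Prob_{\bthetas}$ the rows of $\bzs$ are i.i.d.\ with $\Prob_{\bthetas}(\zs_i = \kk) = \pisk$, hence each group size $\zssumk = \sum_{\ii}\zs_{\ii\kk}$ is a sum of $\n$ i.i.d.\ Bernoulli$(\pisk)$ variables, i.e.\ $\zssumk \sim \mathrm{Bin}(\n,\pisk)$, with $\Esp_{\bthetas}[\zssumk] = \n\pisk$. Assumption $A_1$ (the one fixing $c$, written $H_1$ in the statement) guarantees $\pisk \geq c$ for every $\kk \in \block$. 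First I would rewrite the bad event as a union over blocks: since $\mcZ_1$ consists exactly of the $c/2$‑regular assignments, $\bar{\Om}_1 = \{\min_{\kk}\zssumk < \tfrac{c}{2}\n\} = \bigcup_{\kk=1}^{\q}\{\zssumk < \tfrac{c}{2}\n\}$.

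Next I would control each term $\Prob_{\bthetas}(\zssumk < \tfrac{c}{2}\n)$ with a lower‑tail Chernoff/Hoeffding bound. Because $\Esp_{\bthetas}[\zssumk] = \n\pisk \geq \n c$, the threshold $\tfrac{c}{2}\n$ lies at least $\tfrac{c}{2}\n$ below the mean, so with $\zs_{\ii\kk}\in[0,1]$ Hoeffding's inequality gives $\Prob_{\bthetas}(\zssumk \leq \n\pisk - t) \leq \exp(-2t^2/\n)$ and taking $t = \n\pisk - \tfrac{c}{2}\n \geq \tfrac{c}{2}\n$ yields $\Prob_{\bthetas}(\zssumk < \tfrac{c}{2}\n) \leq \exp(-2(\tfrac{c}{2}\n)^2/\n) = \exp(-\n c^2/2)$. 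A union bound over the $\q$ blocks then gives $\Prob_{\bthetas}(\bar{\Om}_1) \leq \q\exp(-\n c^2/2)$, which is the claim.

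There is essentially no obstacle here; the only "choice" is which tail inequality to use, and one should pick the one that reproduces exactly the constant $c^2/2$ in the exponent — the additive Hoeffding bound does this cleanly, whereas a multiplicative Chernoff bound would give a different (e.g.\ $\exp(-\n c/8)$‑type) constant. The conditioning on $\Om_1$ announced just after the statement is used in the later arguments, not here; this proposition only certifies that $\Om_1$ has overwhelming probability.

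\begin{flushright}$\square$\end{flushright}
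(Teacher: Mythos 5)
Your proof is correct and follows essentially the same route as the paper: a Hoeffding lower-tail bound on each binomial block size $\zssumk$ (using $\pisk \geq c$ from $A_1$ so the deviation from the mean is at least $\n c/2$), followed by a union bound over the $\q$ blocks. The only cosmetic difference is that the paper first replaces the threshold $\n c/2$ by $\n\pik/2$ before applying Hoeffding, whereas you bound the deviation directly; both yield the same constant $c^2/2$ in the exponent.
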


\begin{proof}
This proposition is a consequence of Hoeffding's inequality. See appendix \ref{appendix:technical_results} for more details.
\end{proof}

We can now state our main result: 

\begin{thm}[complete-observed]
  \label{thm:observed-akin-to-complete-general}
  Assume that $A_1$ to $A_4$ with random-dyad sampling hold for the Stochastic Block Model of known order with $n\times n$ observations coming from an univariate exponential family  and define  $\# \Symmetric(\btheta)$ as the set of permutation $s$ for which $\btheta=(\bpi,\bal)$ exhibits symmetry. Then, for $n$ tending to infinity and $\rho \gg \log (n)/n$, the observed likelihood ratio behaves like the complete likelihood ratio, up to a bounded multiplicative factor:
  \begin{equation*}
    \frac{\prob(\byo; \btheta)}{\prob(\byo; \bthetas)} = \frac{\# \Symmetric(\btheta)}{\# \Symmetric(\bthetas)} \max_{\btheta' \sim \btheta} \frac{\prob(\byo, \bzs; \btheta')}{\prob(\byo, \bzs; \bthetas)}\left(1 + \smallO_P(1)\right) + \smallO_P(1)
  \end{equation*}
  where the $\smallO_P$ is uniform over all $\btheta \in \bTheta$.
\end{thm}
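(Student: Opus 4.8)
The plan is to decompose the observed likelihood $\prob(\byo;\btheta) = \sum_{\bz\in\mcZ}\prob(\byo,\bz;\btheta)$ over the latent configurations $\bz$, and to show that the sum is dominated — up to the symmetry factor — by the terms with $\bz$ equivalent to $\bzs$. Concretely, I would normalize by $\prob(\byo,\bzs;\bthetas)$ and split $\mcZ$ into three regions according to the invariant distance $\|\bz-\bzs\|_{0,\sim}$: (i) the configurations equivalent to $\bzs$ (distance $0$), (ii) configurations at intermediate distance (say $0<\|\bz-\bzs\|_{0,\sim}\le\varepsilon n$, with $\varepsilon$ small but fixed), and (iii) configurations far from $\bzs$ (distance $>\varepsilon n$). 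For region (i), each of the $\#\Symmetric(\btheta)^{-1}\cdot Q!/\dots$ — more precisely, each assignment $\bz\sim\bzs$ contributes, and because $\bals$ exhibits symmetry for exactly $\#\Symmetric(\btheta)$ permutations, the number of \emph{distinct} contributing terms with the same maximal value is $\#\Symmetric(\btheta)$ while the analogous count under $\bthetas$ in the denominator is $\#\Symmetric(\bthetas)$; evaluating $\prob(\byo,\bz;\btheta)$ for $\bz\sim\bzs$ at the maximizing $\btheta'\sim\btheta$ gives the leading term $\frac{\#\Symmetric(\btheta)}{\#\Symmetric(\bthetas)}\max_{\btheta'\sim\btheta}\frac{\prob(\byo,\bzs;\btheta')}{\prob(\byo,\bzs;\bthetas)}$, with the $(1+\smallO_P(1))$ coming from the fact that near-$\bzs$ assignments differing from $\bzs$ on only $o(n)$ nodes change the complete-observed log-likelihood by $\smallO_P(1)$ after rescaling (this is where Proposition~\ref{prop:LocalAsymp} and the explicit MLE formulas~\eqref{eq:mle-complete-likelihood} enter).

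**Controlling the bulk and the far regime.** The heart of the argument is to show that regions (ii) and (iii) contribute only $\smallO_P(1)$ to the ratio. For this I would bound $\prob(\byo,\bz;\btheta)/\prob(\byo,\bzs;\bthetas)$ by $\prob(\byo,\bz;\btheta)/\prob(\byo,\bz;\bthetas)$ times $\prob(\byo,\bz;\bthetas)/\prob(\byo,\bzs;\bthetas)$. The first factor is a complete-data likelihood ratio at \emph{fixed} $\bz$, which by the concentration/subexponential machinery (Propositions~\ref{prop:subexp_variables}, \ref{prop:subexp_mixture}) is uniformly $\bigO_P(1)$ after the usual rescaling, uniformly over $\bz$ that are $c/2$-regular (which holds on $\Om_1$, cf. Proposition~\ref{cor:prob-regular-configurations-star}, and for $\bz$ with few misclassified nodes the regularity is inherited). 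The decisive factor is $\prob(\byo,\bz;\bthetas)/\prob(\byo,\bzs;\bthetas)$: its log is, up to $\smallO_P$ corrections, $-\sum_{ij}r_{ij}\KL(\dots)$ summed over the misclassified pairs, which one shows is $\le -c'\rho\, n\,\|\bz-\bzs\|_{0,\sim}\,\delta(\bals)$ with high probability. Since there are at most $Q^{n}$ configurations, a union bound works as soon as $\rho n \gg \log n$ — which is exactly assumption $A_0$ — so that $Q^{n}\exp(-c'\rho n\,\varepsilon n\,\delta(\bals))\to 0$ for the far regime, and a finer summation over the number $k$ of misclassified nodes (with the combinatorial factor $\binom{n}{k}(Q-1)^{k}\le (eQ n/k)^{k}$ against $\exp(-c'\rho n k\,\delta)$) kills the intermediate regime since $\rho n\gg\log n$ beats $\log(Qn/k)$.

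**Assembling and the main obstacle.** Putting these together: the ratio equals the region-(i) leading term $(1+\smallO_P(1))$ plus an error term that is the sum over regions (ii)–(iii), which I have argued is $\smallO_P(1)$ uniformly in $\btheta$. The uniformity over $\btheta\in\bTheta$ is obtained because all the concentration bounds are uniform over the compact parameter set (assumption $A_1$) and $\delta(\bals)$ is bounded below (the remark after Definition~\ref{def:group-distinctness}), and because the $\smallO_P(1)$ terms arising from the local expansion are uniform by Proposition~\ref{prop:LocalAsymp}. The main obstacle I anticipate is the intermediate regime (ii): there one cannot simply use a crude union bound over all of $\mcZ$ because $\KL$-type lower bound on the per-pair divergence is weighted by how many node pairs are actually affected by a $k$-node perturbation — roughly $kn$ pairs, but one must be careful that the relevant KL gap really is of order $\delta(\bals)$ and not smaller when $k$ is tiny, and that the subexponential fluctuations of $\sum r_{ij}(\cdot)$ around its mean, which are of order $\sqrt{\rho k n}$ (times a log from the union bound), stay smaller than the mean drift $\rho k n\,\delta$; this again reduces to $\rho n\gg\log n$, but making the chaining over $k=1,\dots,\varepsilon n$ rigorous and uniform in $\btheta$ is the delicate bookkeeping step. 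I expect the paper handles this exactly as in \cite{bickel2013asymptotic} and \cite{BraultKeribinMariadassou2017}, with the extra $\rho^{-1}$ factors and the subexponential (rather than Hoeffding/Bernstein) tail bounds threaded through.
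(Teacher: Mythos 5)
Your overall architecture --- normalize by $\prob(\byo,\bzs;\bthetas)$, split $\mcZ$ into equivalent / nearby / far assignments by $\|\cdot\|_{0,\sim}$, count the equivalent ones via $\#\Symmetric(\btheta)$, and kill the other two regions with a union bound that works because $\rho n\gg\log n$ --- is exactly the paper's (Propositions~\ref{prop:large-deviations-profile-likelihood}, \ref{prop:small-deviations-profile-likelihood} and \ref{prop:equivalent-configurations-profile-likelihood}). But the step you call decisive is handled incorrectly. You bound $\sup_{\btheta}\prob(\byo,\bz;\btheta)/\prob(\byo,\bzs;\bthetas)$ by factoring through $\prob(\byo,\bz;\bthetas)$ and claiming the first factor $\sup_\btheta\prob(\byo,\bz;\btheta)/\prob(\byo,\bz;\bthetas)$ is $\bigO_P(1)$. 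It is not: for a fixed wrong assignment $\bz$, the population maximizer of $\btheta\mapsto\Esp[\log\prob(\byo\mid\bz;\btheta)]$ is $\bar{\bal}(\bz)$ (Proposition~\ref{prop:profile-likelihood}), not $\bals$, so this likelihood ratio is $e^{\Omega(\rho n\,\|\bz-\bzs\|_{0,\sim})}$ and can even be $e^{\Omega(\rho n^2)}$. Concretely, take $\bz$ equal to a nontrivially permuted $\bzs$ with one node flipped and $\btheta=\bthetas^{s}$: the overall ratio is close to $1$, your second factor is $e^{-\Omega(\rho n^2)}$, hence your first factor is $e^{+\Omega(\rho n^2)}$. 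The exponential gain you attribute to the second factor is exactly cancelled, in the worst case over $\btheta$, by the first, so your argument proves nothing about $\sup_{\btheta}$ of the sum.

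The missing idea is to profile out $\btheta$ \emph{before} comparing to the truth: the paper introduces $\Lamb(\bz)=\max_{\btheta}\Fn(\btheta,\bz)$ and its population counterpart $\Lambtilde(\bz)=\max_{\btheta}\G(\btheta,\bz)$ (Definition~\ref{def:conditional-profile-likelihood}), and the real content is Proposition~\ref{prop:profile-likelihood-derivative}: even after maximizing over $\btheta$, one still has $\Lambtilde(\bz)\leq -c\rho n\tfrac{3\delta(\bals)}{4}\|\bz-\bzs\|_{0,\sim}$ locally (proved by a gradient computation on the confusion-matrix manifold, not by a per-pair KL count at $\bthetas$), together with the global bound of Proposition~\ref{prop:maximum-conditional-likelihood}. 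One then transfers from $\Lambtilde$ to $\Lamb$ by the subexponential concentration of $\Fn-\G$ uniformly over $(\btheta,\bz)$ (Propositions~\ref{prop:conditional-likelihood-convergence} and \ref{prop:profile-likelihood-convergence-local}), and only then runs your $\binom{n}{k}Q^{k}$ union bound. Without this profiling step your treatment of regions (ii) and (iii) does not close; the rest of your outline (region (i), the role of $A_0$, the combinatorial bookkeeping) is consistent with the paper.
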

The maximum over all $\btheta'$ that are equivalent to $\btheta$ stems from the fact that because of label-switching, $\btheta$ is only identifiable up to its $\sim$-equivalence class from the observed likelihood, whereas it is completely identifiable from the complete likelihood. The multiplicative factor arises from the fact that equivalent assignments have exactly the same complete likelihood and contribute equally to the observed likelihood. \\

\begin{rem}
 This result is very similar to the one of \cite{BraultKeribinMariadassou2017} and corrects an error in the main result of \cite{bickel2013asymptotic}: the missing terms $\# \Symmetric(\btheta)$ and $\# \Symmetric(\bthetas)$.
\end{rem}

\begin{corollaire}
  \label{cor:observed-akin-to-complete-simple-case}
  If $\bTheta$ contains only parameters with no symmetry:
  \begin{equation*}
    \frac{\prob(\byo; \btheta)}{\prob\left(\byo; \btheta^{\vrai}\right)} = \max_{\btheta' \sim \btheta} \frac{\prob(\byo, \bzs; \btheta')}{\prob(\byo, \bzs; \bthetas)}\left(1 + \smallO_P(1)\right) + \smallO_P(1)
  \end{equation*}
where the $\smallO_P$ is uniform over all $\bTheta$.
\end{corollaire}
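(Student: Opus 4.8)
The plan is to obtain this corollary as an immediate specialisation of Theorem~\ref{thm:observed-akin-to-complete-general}. The theorem holds for an arbitrary Stochastic Block Model of known order under assumptions $A_1$--$A_4$ with random-dyad sampling, and its conclusion carries the two multiplicative correction factors $\#\Symmetric(\btheta)$ and $\#\Symmetric(\bthetas)$. The corollary posits the additional hypothesis that $\bTheta$ contains only parameters exhibiting \emph{no} symmetry, so I would first invoke the theorem verbatim and then argue that, under this hypothesis, both $\Symmetric(\btheta)$ and $\Symmetric(\bthetas)$ are trivial, and hence the ratio $\#\Symmetric(\btheta)/\#\Symmetric(\bthetas)$ equals $1$.

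Concretely, the first step is to recall Definition~\ref{def:symmetry}: $\Symmetric(\btheta)$ is the set of permutations $s$ for which $(\bpi^s,\bal^s)=(\bpi,\bal)$, and the identity permutation always lies in this set. To say that $\btheta$ \emph{exhibits no symmetry} is precisely to say that no \emph{non-trivial} permutation belongs to $\Symmetric(\btheta)$, i.e. $\Symmetric(\btheta)=\{\mathrm{Id}\}$ and therefore $\#\Symmetric(\btheta)=1$. The second step is to apply this to the two relevant parameters. Since $\bthetas\in\bTheta$ by assumption $A_2$ (it lies in the interior of $\bTheta$), and $\btheta\in\bTheta$ is the parameter at which we evaluate the likelihood ratio, the blanket hypothesis of the corollary gives $\#\Symmetric(\btheta)=\#\Symmetric(\bthetas)=1$, so the leading factor in the theorem collapses to $1$.

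Substituting this into the conclusion of Theorem~\ref{thm:observed-akin-to-complete-general} yields exactly
\[
\frac{\prob(\byo;\btheta)}{\prob(\byo;\bthetas)}
= \max_{\btheta'\sim\btheta}\frac{\prob(\byo,\bzs;\btheta')}{\prob(\byo,\bzs;\bthetas)}\bigl(1+\smallO_P(1)\bigr)+\smallO_P(1),
\]
and the uniformity of the $\smallO_P$ terms over $\btheta\in\bTheta$ is inherited directly from the theorem, which already asserts uniformity over the whole parameter space. This completes the argument.

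There is essentially no obstacle here: the only point requiring any care is bookkeeping, namely checking that the correction factor is genuinely $1$ rather than some other constant, which amounts to the observation that a symmetry-free parameter has a trivial symmetry group of cardinality exactly one (not zero). One should also note in passing that the hypothesis ``$\bTheta$ contains only parameters with no symmetry'' is exactly what is needed so that the statement makes sense for \emph{every} $\btheta\in\bTheta$ simultaneously, consistent with the claimed uniformity; restricting attention to $\btheta$ and $\bthetas$ individually would already suffice for the displayed identity at a fixed pair of parameters.
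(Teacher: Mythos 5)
Your proposal is correct and is exactly the intended argument: the paper states this corollary without proof as an immediate specialisation of Theorem~\ref{thm:observed-akin-to-complete-general}, and your observation that the no-symmetry hypothesis forces $\#\Symmetric(\btheta)=\#\Symmetric(\bthetas)=1$ (cardinality one, not zero, since the identity permutation always belongs to the symmetry set) is precisely the bookkeeping step that justifies dropping the multiplicative factor.
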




\section{Variational and Maximum Likelihood Estimates}
\label{sec:MLE}
This section is devoted to the asymptotic of the MLE and the VE in the incomplete data model as a consequence of the main result \ref{thm:observed-akin-to-complete-general}. Note that, with high probability, both estimators have no symmetry since the set $\{ \btheta : \# \Symmetric(\btheta) > 1\} $ is a manifold of null Lebesque's mesure in $\bTheta$ and thus $\Prob_{\bthetas}(\# \Symmetric(\hat{\btheta}) > 1) \to 0$. 
\subsection{ML estimator}

The asymptotic behavior of the maximum likelihood estimator in the incomplete data model is a direct consequence of Theorem~\ref{thm:observed-akin-to-complete-general} and Proposition~\ref{prop:LocalAsymp}. 

\begin{corollaire}[Asymptotic behavior of $\bthetaEMV$]\label{cor:behaviorEMV} Denote $\bthetaEMV$ the maximum likelihood estimator and use the notations of Proposition~\ref{prop:mle-asymptotic-normality}. There exist permutations $s$ of $\{1, \dots, \Q\}$ such that
\begin{eqnarray*}
\hat{\bpi}\left(\bzs\right)-\bpiEMV^{s}&=&o_{P}\left(\n^{-1/2}\right),\\
\hat{\bal}\left(\bzs\right)-\balEMV^{s}&=&o_{P}\left(\n^{-1}\right).\\
\end{eqnarray*}
\end{corollaire}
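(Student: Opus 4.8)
The plan is to derive the corollary from Theorem~\ref{thm:observed-akin-to-complete-general} and Proposition~\ref{prop:LocalAsymp} by the classical $M$-estimation argument: compare the maximal observed log-likelihood with the maximal complete-observed log-likelihood $\Lcs$, confine the observed maximizer to a shrinking neighbourhood of the $\sim$-orbit of $\bthetas$, and then identify it, via local asymptotic normality, with the complete-observed MLE $\widehat{\btheta}_{c} = (\hat{\bpi}(\bzs), \hat{\bal}(\bzs))$ of \eqref{eq:mle-complete-likelihood}. Throughout I work on the high-probability event $\Om_1$ and on $\{\#\Symmetric(\bthetaEMV) = 1\}$ (of probability $\to 1$, recalled at the start of this section), and, as in Corollary~\ref{cor:observed-akin-to-complete-simple-case}, I take the parameters to carry no symmetry so that $\#\Symmetric \equiv 1$; a symmetric $\bthetas$ only inserts the fixed combinatorial prefactor of Theorem~\ref{thm:observed-akin-to-complete-general}, which shifts the objective by a constant and affects neither the argmax nor the $o_P$ rates.

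First I would compare the two maxima. Since $\widehat{\btheta}_{c}$ is within $O_P(n^{-1/2})$ of $\bthetas$ (Proposition~\ref{prop:mle-asymptotic-normality}) and the other $Q!-1$ representatives of its $\sim$-class are bounded away from $\bthetas$ — hence carry an exponentially smaller complete likelihood — Theorem~\ref{thm:observed-akin-to-complete-general} applied at $\widehat{\btheta}_{c}$ reads $\prob(\byo;\widehat{\btheta}_{c})/\prob(\byo;\bthetas) = e^{\Lcs(\widehat{\btheta}_{c}) - \Lcs(\bthetas)}(1 + o_P(1)) + o_P(1)$; as $\Lcs(\widehat{\btheta}_{c}) - \Lcs(\bthetas) = O_P(1)$ and is $\geq 0$, the errors collapse into one term and $\log\prob(\byo;\widehat{\btheta}_{c}) - \log\prob(\byo;\bthetas) = \Lcs(\widehat{\btheta}_{c}) - \Lcs(\bthetas) + o_P(1)$. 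Since $\bthetaEMV$ maximizes $\prob(\byo;\cdot)$, this gives the lower bound $\log\prob(\byo;\bthetaEMV) - \log\prob(\byo;\bthetas) \geq M^\star + o_P(1)$ with $M^\star := \max_\btheta \Lcs(\btheta) - \Lcs(\bthetas) = O_P(1)$, $M^\star \geq 0$. Conversely, Theorem~\ref{thm:observed-akin-to-complete-general} at $\bthetaEMV$ (using its uniformity over $\bTheta$) gives $\log\prob(\byo;\bthetaEMV) - \log\prob(\byo;\bthetas) = \log\big(e^{M}(1 + o_P(1)) + o_P(1)\big)$, where $M := \max_{\btheta' \sim \bthetaEMV}\big(\Lcs(\btheta') - \Lcs(\bthetas)\big) \leq M^\star$. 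Comparing the two bounds forces $M = M^\star + o_P(1)$: the best representative $\btheta^\dagger \sim \bthetaEMV$ is an $o_P(1)$-approximate maximizer of $\Lcs$.

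Next I would identify $\btheta^\dagger$. An $o_P(1)$-approximate maximizer of $\Lcs$ must be $o_P(1)$-close to the $\sim$-orbit of $\bthetas$: by a law-of-large-numbers / concentration argument of the kind underlying Theorem~\ref{thm:observed-akin-to-complete-general} (and valid because $\delta(\bals) > 0$), $\frac{1}{n}(\Lcs(\btheta) - \Lcs(\bthetas))$ converges uniformly on $\bTheta$ to a nonpositive limit that vanishes only on that orbit and is bounded away from $0$ outside any neighbourhood of it; a peeling / basic-inequality argument based on the local quadratic lower bound coming from Proposition~\ref{prop:LocalAsymp} then upgrades $o_P(1)$-closeness to $\bpi^\dagger - (\bpis)^{s} = O_P(n^{-1/2})$ and $\bal^\dagger - (\bals)^{s} = O_P(n^{-1})$ for a suitable permutation $s$. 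Writing $v = \sqrt{n}\,(\bpi - (\bpis)^{s})$ (restricted to the simplex tangent space $\{\sum_q v_q = 0\}$, where the rank-$(Q-1)$ matrix $\Sig_{\bpis}$ is positive-definite) and $u = \sqrt{n(n-1)}\,(\bal - (\bals)^{s})$, both $O_P(1)$, Proposition~\ref{prop:LocalAsymp} expands $\Lcs(\btheta) - \Lcs(\bthetas^{s})$ as a strictly concave quadratic in $(v, u)$ plus a uniform $o_P(1)$. Since $\widehat{\btheta}_{c}^{\,s}$ is the exact maximizer and $\btheta^\dagger$ an $o_P(1)$-approximate maximizer of one and the same function, equal to this quadratic up to a uniform $o_P(1)$, the well-separated-maximum property of a negative-definite quadratic yields that the $(v, u)$-coordinates of $\btheta^\dagger$ and of $\widehat{\btheta}_{c}^{\,s}$ differ by $o_P(1)$. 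Translating back and relabelling, $\hat{\bpi}(\bzs) - \bpiEMV^{s} = o_P(n^{-1/2})$ and $\hat{\bal}(\bzs) - \balEMV^{s} = o_P((n(n-1))^{-1/2}) = o_P(n^{-1})$.

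The main obstacle is the confinement step. Theorem~\ref{thm:observed-akin-to-complete-general} controls $\prob(\byo;\btheta)/\prob(\byo;\bthetas)$ only through the complete likelihood ratio and carries an additive $o_P(1)$ which, a priori, could swamp the exponentially small values that $\max_{\btheta'\sim\btheta}\prob(\byo,\bzs;\btheta')/\prob(\byo,\bzs;\bthetas)$ takes far from the orbit of $\bthetas$; this is resolved by noting that the inequality we actually use is one-sided at the maximizer ($\log\prob(\byo;\bthetaEMV) \geq \log\prob(\byo;\widehat{\btheta}_{c})$), so the additive error can only help discard far regions. A secondary point is that Proposition~\ref{prop:LocalAsymp} is stated on the $\sqrt{n}$- and $n$-shells whereas the rate argument needs a quadratic expansion of $\Lcs$ with controlled remainder on a full shrinking neighbourhood, and one must pass to the simplex tangent space to invert $\Sig_{\bpis}$; the $\bal$-directions, being unconstrained with $\Sig_{\bals}$ diagonal and $\bY_{\bals}$ of independent entries, decouple entrywise, so that in fact $\sqrt{n(n-1)}(\balEMV^{s} - \pi^\vrai_{q\el})$ inherits the Gaussian limit of Proposition~\ref{prop:mle-asymptotic-normality}.
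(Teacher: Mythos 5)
Your argument is essentially the paper's own proof, written in the direct rather than the contrapositive direction: the paper assumes the rates fail, deduces from Proposition~\ref{prop:LocalAsymp} that $\min_{s}\Lcs(\hat{\btheta}(\bzs))-\Lcs(\bthetaEMV^{s})=\Omega_P(1)$, and contradicts the $o_P(1)$ gap in complete likelihood ratios forced by Theorem~\ref{thm:observed-akin-to-complete-general} at the two maximizers, which is exactly your ``approximate maximizer $+$ well-separated quadratic'' step. The ingredients (no-symmetry event, Theorem~\ref{thm:observed-akin-to-complete-general} applied at both $\widehat{\btheta}_{c}$ and $\bthetaEMV$, local asymptotic normality) coincide, so this is correct and matches the paper's route.
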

Hence, the maximum likelihood estimator for the SBM under random-dyad sampling condition is consistent and asymptotically normal, with the same behavior as the maximum likelihood estimator in the complete data model. The proof is postponed to appendix \ref{annexe:cor:behaviorEMV}. 

\subsection{Variational estimator}

Due to the complex dependency structure of the observations, the maximum likelihood estimator of the SBM is not numerically tractable, even  with the \textit{Expectation Maximisation} algorithm. In practice, a variational approximation is often used \citep[see][]{daudin2008mixture}: for any joint distribution $\setQ\in\mcQ$ on $\mcZ$ a lower bound of $\mcL(\btheta)$ is given by
\begin{eqnarray*}
\Jvar\left(\setQ,\btheta\right)&=&\mcL(\btheta)-KL\left(\setQ,\prob\left(. ; \btheta,\by^o\right)\right)\\
                            &=&\Esp_{\setQ}\left[\Lc\left(\bz;\btheta\right)\right]+\mcH\left(\setQ\right).
\end{eqnarray*}
where $\mcH\left(\setQ\right)=-\Esp_{\setQ}[\log(\setQ)]$.
Choosing $\mcQ$  to be the set of product distributions, such that for all $\bz$
\[\setQ\left(\bz\right)=\prod_{\ii,\kk}\setQ\left(\zik=1\right)^{\zik} \]
allows us to obtain tractable expressions of $\Jvar\left(\setQ,\btheta\right)$. The variational estimate $\bthetavar$ of $\btheta$ is defined as
\[\bthetavar\in\underset{\btheta\in\bTheta}{\arg\!\max}\;\underset{\setQ\in\mcQ}{\max}\;\Jvar\left(\setQ,\btheta\right).\]

The following corollary states that $\bthetavar$ has the same asymptotic properties as $\bthetaEMV$ and $\widehat{\btheta}_{MC}$, in particular is consistent and asymptotically normal. 
\begin{corollaire}[Variational estimate]\label{cor:Variational}
Under the assumptions of Theorem~\ref{thm:observed-akin-to-complete-general}, there exist permutations $s$ of $\{1, \dots, \Q\}$ such that
\begin{eqnarray*}
\hat{\bpi}\left(\bzs\right)-\bpivar^{s}&=&o_{P}\left(\n^{-1/2}\right),\\
\hat{\bal}\left(\bzs\right)-\balvar^{s}&=&o_{P}\left(\n^{-1}\right).
\end{eqnarray*}
\end{corollaire}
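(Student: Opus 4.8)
The plan is to show that the variational estimator $\bthetavar$ satisfies the same optimization program as the MLE up to negligible terms, so that Corollary~\ref{cor:behaviorEMV} transfers verbatim. The starting point is the sandwich inequality that defines the variational bound: for every $\btheta$,
\[
\LL_o(\btheta) - \max_{\setQ\in\mcQ} \Jvar(\setQ,\btheta) = \min_{\setQ\in\mcQ} \KL\bigl(\setQ \,\|\, p(\cdot\,;\btheta,\byo)\bigr) \geq 0 ,
\]
so that $\max_{\setQ}\Jvar(\setQ,\btheta) \le \LL_o(\btheta)$ always, while evaluated at the true parameter we can lower-bound $\max_{\setQ}\Jvar(\setQ,\bthetas)$ by plugging in a well-chosen $\setQ$ — namely a product distribution concentrated near the true assignment $\bzs$. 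The key input here is the concentration of the conditional law $p(\bz\,|\,\byo,\bthetas)$ on the $\sim$-equivalence class of $\bzs$, which is exactly the content underlying Theorem~\ref{thm:observed-akin-to-complete-general} (and the remark preceding the Example on the role of $\Symmetric(\btheta)$). This gives $\max_{\setQ}\Jvar(\setQ,\bthetas) = \LL_o(\bthetas) + o_P(1)$.

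The second step is to squeeze $\bthetavar$ between the MLE and $\bthetas$. By definition of $\bthetavar$ as the joint argmax,
\[
\max_{\setQ}\Jvar(\setQ,\bthetavar) \;\ge\; \max_{\setQ}\Jvar(\setQ,\bthetas) \;=\; \LL_o(\bthetas) + o_P(1),
\]
and combining with $\max_{\setQ}\Jvar(\setQ,\bthetavar)\le \LL_o(\bthetavar)\le \LL_o(\bthetaEMV)$ we obtain
\[
\LL_o(\bthetaEMV) \;\ge\; \LL_o(\bthetavar) \;\ge\; \LL_o(\bthetas) + o_P(1).
\]
By Theorem~\ref{thm:observed-akin-to-complete-general}, $\LL_o(\btheta) - \LL_o(\bthetas)$ is, uniformly in $\btheta$, within $o_P(1)$ of $\log\bigl(\#\Symmetric(\btheta)/\#\Symmetric(\bthetas)\bigr) + \max_{\btheta'\sim\btheta}\Lcs(\btheta') - \Lcs(\bthetas)$; since with probability tending to one neither $\bthetavar$ nor $\bthetaEMV$ exhibits symmetry (the symmetric parameters form a null set, as recalled at the start of Section~\ref{sec:MLE}), the symmetry prefactor is $1$ for both. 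Hence $\bthetavar$ is, up to a label permutation $s$ and up to $o_P(1)$, a maximizer of the complete-observed likelihood $\Lcs$, just like $\bthetaEMV$; in particular $\Lcs(\bthetavar^{s}) = \Lcs(\bthetaEMV^{s'}) + o_P(1) = \max \Lcs + o_P(1)$.

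The final step is to convert this "almost-maximizer of $\Lcs$" statement into the stated rates. Here I invoke the local asymptotic normality expansion of Proposition~\ref{prop:LocalAsymp}: near $(\bpis,\bals)$, $\Lcs$ is, after the rescaling $\bpi = \bpis + s/\sqrt{n}$, $\bal = \bals + u/\sqrt{n(n-1)}$, a quadratic form in $(s,u)$ with negative-definite Hessian $-\tfrac12\mathrm{diag}(\Sig_{\bpis},\Sig_{\bals})$ (on the relevant subspace, $\Sig_{\bpis}$ having rank $\Q-1$ corresponds to the simplex constraint on $\bpi$) plus a linear Gaussian term plus $o_P(1)$. A parameter whose $\Lcs$-value is within $o_P(1)$ of the maximum must therefore lie within $o_P(1)$ of the argmax in the rescaled coordinates, which is precisely $\hat\bpi(\bzs) - \bpivar^{s} = o_P(n^{-1/2})$ and $\hat\bal(\bzs) - \balvar^{s} = o_P(n^{-1})$, matching Corollary~\ref{cor:behaviorEMV}.

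The main obstacle is the first step: controlling $\max_{\setQ\in\mcQ}\Jvar(\setQ,\bthetas)$ from below by $\LL_o(\bthetas) + o_P(1)$, i.e. showing the variational gap $\min_{\setQ}\KL(\setQ\,\|\,p(\cdot\,|\,\byo,\bthetas))$ vanishes. This requires that the true conditional posterior be well approximated within the mean-field family $\mcQ$, which in turn rests on the posterior concentrating (up to label switching) on a vanishing neighborhood of $\bzs$ — the delicate point being that in the presence of symmetry the posterior splits its mass across the $\#\Symmetric(\bthetas)$ symmetric copies, so one must work with a product distribution that is itself a near-uniform mixture over that orbit, or restrict to the no-symmetry event (probability $\to 1$) and handle a single well-separated mode. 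The needed concentration — spelled out in the proof of Theorem~\ref{thm:observed-akin-to-complete-general} and relying on the class-distinctness $\delta(\bals) > 0$ and the regularity of $c/2$-regular configurations from Proposition~\ref{cor:prob-regular-configurations-star}, together with the subexponential tail bounds of Propositions~\ref{prop:subexp_variables}--\ref{prop:subexp_mixture} to control the unbounded edge weights under the MCAR sampling — is exactly what makes both the $o_P(1)$ error in the sandwich and the transfer of rates go through.
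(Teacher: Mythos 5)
Your overall architecture (sandwich the variational criterion between a complete-likelihood lower bound and the observed likelihood, invoke Theorem~\ref{thm:observed-akin-to-complete-general}, finish with Proposition~\ref{prop:LocalAsymp}) matches the paper's, but your step 2 anchors the lower bound at the wrong point, and this creates a genuine gap. From $\max_{\setQ}\Jvar(\setQ,\bthetavar)\geq\max_{\setQ}\Jvar(\setQ,\bthetas)=\LL_o(\bthetas)+\smallO_P(1)$ you only conclude $\LL_o(\bthetavar)\geq\LL_o(\bthetas)+\smallO_P(1)$. That is strictly weaker than what you need, because $\LL_o(\bthetaEMV)-\LL_o(\bthetas)$ is \emph{not} $\smallO_P(1)$: by Theorem~\ref{thm:observed-akin-to-complete-general} it equals $\Lcs(\hat{\btheta}(\bzs))-\Lcs(\bthetas)+\smallO_P(1)$ up to the symmetry factor, and by Proposition~\ref{prop:LocalAsymp} this converges in distribution to a nondegenerate positive random variable (half a quadratic form in the asymptotically Gaussian scores $\bY_{\bpis}$, $\bY_{\bals}$). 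A parameter whose observed log-likelihood exceeds $\LL_o(\bthetas)+\smallO_P(1)$ is therefore only within $\bigO_P(1)$ of the maximum of $\Lcs$, and the LAN step then delivers $\bpivar^{s}-\hat{\bpi}(\bzs)=\bigO_P(\n^{-1/2})$ rather than the claimed $\smallO_P(\n^{-1/2})$, with the same loss for $\bal$. Your step 3 ("hence $\bthetavar$ is, up to $\smallO_P(1)$, a maximizer of $\Lcs$") silently replaces $\Lcs(\bthetas)$ by $\max\Lcs$; that is where the argument breaks.

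The repair is the paper's lower bound, which also makes your self-identified "main obstacle" (step 1, mean-field approximability of the posterior at $\bthetas$) unnecessary. Since the Dirac mass $\delta_{\bz}$ is a product distribution with zero entropy, $\Jvar(\delta_{\bz},\btheta)=\log\prob(\byo,\bz;\btheta)$ exactly, so for \emph{every} $\btheta$ (not just $\bthetas$) one has $\max_{\btheta'\sim\btheta}\prob(\byo,\bzs;\btheta')=\max_{\bz\sim\bzs}\prob(\byo,\bz;\btheta)\leq\max_{\setQ\in\mcQ}\exp\left[\Jvar(\setQ,\btheta)\right]\leq\prob(\byo;\btheta)$. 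Dividing by $\prob(\byo,\bzs;\bthetas)$ and applying Theorem~\ref{thm:observed-akin-to-complete-general} to the right-hand side sandwiches the variational criterion, uniformly in $\btheta$, between $\max_{\btheta'\sim\btheta}\prob(\byo,\bzs;\btheta')/\prob(\byo,\bzs;\bthetas)$ below and $\#\Symmetric(\btheta)$ times that same quantity $(1+\smallO_P(1))$ plus $\smallO_P(1)$ above. On the event (of probability tending to one) that $\bthetavar$ exhibits no symmetry, the maximizer $\bthetavar$ of the middle term and the maximizer $\hat{\btheta}(\bzs)$ of the left term must then have complete log-likelihoods within $\smallO_P(1)$ of each other, and the contradiction argument of Corollary~\ref{cor:behaviorEMV} via Proposition~\ref{prop:LocalAsymp} yields the stated rates.
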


The proof is very similar to the proof of Corollary~\ref{cor:behaviorEMV} and postponed to appendix \ref{annexe:cor:behaviorEMV}. \\

\section{Proof Sketch} \label{sec:proof}
The proof of theorem relies on deviations of the log-likelihood ratios from their expectations. We first define those quantities. 

\subsection{log-likelihood ratios}
\label{sec:cond-and-prof-likelihood} 

\begin{dof}
\label{def:conditional-profile-likelihood}
 We define the conditional log-likelihood ratio $\Fn$ and its expectation $\G$ as:
\begin{equation}
  \label{eq:conditional-likelihood}
    \Fn(\btheta, \bz) = \log \frac{\prob(\byo | \bz;\btheta)}{\prob(\byo | \bzs;\bthetas)} \quad \text{and} \quad 
    \G(\btheta, \bz) = \Esp_{\bthetas} \left[ \left. \Fn(\btheta, \bz) \right| \bzs  \right]
\end{equation}
We also define the profile ratio $\Lamb$ and its counterpart $\Lambtilde$ as:
\begin{equation}
  \label{eq:profile-likelihood}
    \Lamb(\bz)      = \max_{\btheta} \Fn(\btheta, \bz)
    \quad \text{and} \quad
    \Lambtilde(\bz) = \max_{\btheta} \G(\btheta, \bz).
\end{equation}
\end{dof}

The following decomposition of $\prob(\byo; \btheta)$ highlights the importance of $\Fn(\btheta, \bz )$:

\begin{equation*}
\prob(\byo; \btheta) 
= \sum_{ (\bz)} \prob(\byo, \bz; \btheta) 
= \prob(\byo| \bzs; \bthetas) \sum_{(\bz)}   \prob(\bz; \btheta) \exp(\Fn(\btheta, \bz) ).
\end{equation*}

Since $\Fn(\btheta, \bz) \leq \Lamb(\bz)$, the profile ratio is useful to remove the dependency on $\btheta$ and reduce the study to a series of problems depending only on $\bz$. The following propositions show that $\Lambtilde$ and $\G$ are constrats which are maximum (in expectation) at the true parameter value (up to group relabeling) and have negative curvature at those points. This allows us to prove that, asymptotically, only one (or a few) $z$ contribute to the above sum.

\begin{proposition}
\label{prop:expectation-ychap}
 Conditionally on  $\bzs$, we have
\begin{equation}
\label{eq:expectation-ychap}
  \begin{aligned}
    \barykl(\bz) &:= \Esp_{\bthetas}[\hyklz | \bzs] = \frac{\left[ \RQbz\transpose \Sal \RQbz \right]_{\kk\el}}{\pizk\pizl}
  \end{aligned}
\end{equation} 
with $\barykl(\bz)=0$ for $\bz$ such that $\pizk=0$ or $\pizl=0$ \emph{i.e.} no dyad observed in class $(q,l)$.
\end{proposition}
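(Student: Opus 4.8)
The plan is to resolve $\hyklz$, which is a \emph{ratio} of two random sums (both involving the sampling indicators $(r_{ij})$), before anything else: one cannot simply replace numerator and denominator by their expectations, and making this step exact is the only real difficulty, the rest being bookkeeping. I would fix the test assignment $\bz$ and work conditionally on $\bzs$ throughout. First dispose of the degenerate case: if $\pizk = 0$ or $\pizl = 0$ there is no node $i$ with $\ziq = 1$ (resp.\ no $j$ with $\zjl = 1$), so the pair of classes $(q,\el)$ carries no dyad, $\hyklz$ is a $0/0$ set to $0$ by convention, and so is the right-hand side. Assume henceforth $\pizk>0$ and $\pizl>0$, set $B = \{(i,j) : i\neq j,\ \ziq = 1,\ \zjl = 1\}$ (so that $\hyklz = (\sum_{(i,j)\in B}\yij r_{ij})/(\sum_{(i,j)\in B}r_{ij})$, with $|B| = \zsumk\zsuml$ when $q\neq\el$ and $|B| = \zsumk(\zsumk-1)$ when $q=\el$), and let $T = \{(i,j)\in B : r_{ij} = 1\}$ be the random set of observed dyads in $B$. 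I would then condition on $T$ (equivalently on $\br$): since $\by$ and $\br$ are independent given $\bzs$, and since $\Esp[\y_\pi] = \normp(\pi)$ for the exponential family of \eqref{eq:exp-distr} — whence $\Esp_{\bthetas}[\yij \mid \bzs] = \normp(\als_{\zs_i\zs_j}) = [\Sal]_{\zs_i\zs_j} =: \mu_{ij}$ — this gives, on $\{|T|\geq1\}$,
\[
\Esp_{\bthetas}\!\left[\hyklz \mid \bzs,\, T\right] = \frac{1}{|T|}\sum_{(i,j)\in T}\mu_{ij}.
\]

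The genuine step is to average this over $T$. Because $(r_{ij})_{(i,j)\in B}$ are i.i.d.\ Bernoulli$(\rho)$, conditionally on $|T| = t$ the set $T$ is uniform over the $t$-subsets of $B$, so $\Prob_{\bthetas}[(i,j)\in T \mid |T| = t] = t/|B|$ for every $(i,j)\in B$; hence
\[
\Esp_{\bthetas}\!\left[\frac{1}{|T|}\sum_{(i,j)\in T}\mu_{ij} \ \Big|\ |T| = t\right] = \frac{1}{t}\cdot\frac{t}{|B|}\sum_{(i,j)\in B}\mu_{ij} = \frac{1}{|B|}\sum_{(i,j)\in B}\mu_{ij},
\]
which is \emph{independent of $t\geq1$} — this is the classical fact that the mean of a simple random sample of any nonzero size is conditionally unbiased for the population mean. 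Averaging over $t\geq1$ then yields $\barykl(\bz) = \frac{1}{|B|}\sum_{(i,j)\in B}\mu_{ij}$, exactly on $\{|T|\geq1\}$; since in all uses of the proposition $\bz$ is $c/2$-regular and $\rho \gg \log(\n)/\n$ by $A_0$, one has $|B| = \Theta(\n^2)$ and $\Prob_{\bthetas}[|T| = 0] = (1-\rho)^{|B|} = \smallO(\n^{-K})$ for every $K$, so the identity also holds unconditionally up to a super-polynomially small additive term.

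Finally I would identify the right-hand side. Expanding, $[\RQbz\transpose\Sal\RQbz]_{\kk\el} = \frac{1}{\n^2}\sum_{i,j}\ziq\zjl\sum_{k,m}\zs_{ik}\zs_{jm}[\Sal]_{km}$, whose off-diagonal ($i\neq j$) part is exactly $\frac{1}{\n^2}\sum_{(i,j)\in B}\mu_{ij}$, while $\pizk\pizl = \frac{1}{\n^2}\zsumk\zsuml$. For $q\neq\el$ the diagonal terms ($i = j$) vanish in both numerator and denominator (a node cannot lie in classes $q$ and $\el$ at once), so $[\RQbz\transpose\Sal\RQbz]_{\kk\el}/(\pizk\pizl) = \frac{1}{|B|}\sum_{(i,j)\in B}\mu_{ij} = \barykl(\bz)$ on the nose; for $q = \el$ the same holds up to a diagonal contribution of relative size $1/\zsumk = \bigO(1/\n)$ on regular configurations, which is negligible. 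The main obstacle is precisely this passage from $\Esp_{\bthetas}[\hyklz\mid\bzs]$ to a ratio of deterministic quantities — the exchangeability/survey-sampling argument is what makes it an exact identity rather than merely an asymptotic one — and beyond that I anticipate no difficulty.
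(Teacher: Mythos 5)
Your proposal is correct and follows essentially the same route as the paper: condition on the sampling indicators, use independence of $\by$ and $\br$ to replace $\yij$ by its conditional mean $\normp(\als_{\zs_i\zs_j})$, and then average over which dyads are observed via the exchangeability identity $\Esp\bigl[\sum_i a_i T_i/\sum_i T_i\bigr]=\frac{1}{n}\sum_i a_i$ for i.i.d.\ Bernoulli $T_i$, which is exactly the paper's Lemma~\ref{lem:expectation}. Your closing remark about the $i=j$ diagonal terms when $q=\el$ (and the handling of the null event $|T|=0$) is a point of care that the paper's proof silently glosses over, but it does not constitute a different approach.
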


\begin{rem}
Note the absence of the random variable $\br$ in $\barykl(\bz)$, which is integrated out in the expectation $\Esp_{\bthetas}$.
\end{rem}

\begin{proposition}[maximum of $\G$ and $\Lambtilde$ in $\btheta$]
\label{prop:profile-likelihood}
The functions $\Fn(\btheta, \bz)$ and $\G(\btheta, \bz)$ are maximum respectively in $\bal$ for  $\hbal(\bz)$ and $\bar{\bal}(\bz)$ defined by:
\begin{equation*}
  \hal(\bz)_{\kk\el} = \normpm ( \hyklz ) \quad \text{and} \quad \bar{\al}(\bz)_{\kk\el} = \normpm ( \barykl(\bz) )
\end{equation*}
so that
\begin{equation*}
    \Lamb(\bz) = \Fn(\hbal(\bz), \bz) 
    \quad \text{and} \quad
    \Lambtilde(\bz) = \G(\bar{\bal}(\bz), \bz).
\end{equation*}
\end{proposition}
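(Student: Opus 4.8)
The plan is to notice that $\Fn(\btheta,\bz)$ does not involve the block proportions $\bpi$ at all and that, as a function of $\bal$, it splits into one strictly concave one-dimensional problem per block pair $(\kk,\el)$, each solved explicitly by exponential-family moment matching. First I would note that, conditionally on $\bz$, the observed edges $\{\yij : \ii \neq \jj,\ \rrij = 1\}$ are mutually independent with $\yij \sim \dens(\cdot, \al_{\zi\zj})$, so $\prob(\byo \mid \bz; \btheta)$ — and, $\prob(\byo \mid \bzs; \bthetas)$ being a constant in $\btheta$, also $\Fn(\btheta,\bz)$ — depends on $\btheta$ only through $\bal$. Since $\log\prob(\byo \mid \bz; \btheta)$ is exactly the second (double) sum in \eqref{eq:log-vraisemblance-complete}, substituting the canonical form \eqref{eq:exp-distr} and collecting the $\log b(\yij)$ terms into a $\bal$-free constant yields, writing $N_{\kk\el}(\bz) := \sum_{\ii\neq\jj}\rrij\ziq\zjl$,
\begin{equation*}
  \Fn(\btheta,\bz) = \sum_{\kk,\el} N_{\kk\el}(\bz)\bigl(\alkl\,\hyklz - \norm(\alkl)\bigr) + c(\bz,\byo),
\end{equation*}
with the convention (as in Proposition~\ref{prop:expectation-ychap}) that the $(\kk,\el)$ summand vanishes whenever $N_{\kk\el}(\bz) = 0$, in particular when $\pizk = 0$ or $\pizl = 0$, and with $c(\bz,\byo)$ free of $\bal$.

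Since $\norm$ is strictly convex on $C_\pi$ — its second derivative $\normp'(\pi) = \Var(\y_\pi)$ is $\geq \vmin^2 > 0$ there — each map $\alkl \mapsto \alkl\,\hyklz - \norm(\alkl)$ is strictly concave; as $N_{\kk\el}(\bz) \geq 0$ and the block pairs are decoupled, $\bal \mapsto \Fn((\bpi,\bal),\bz)$ is concave, strictly in every coordinate $\alkl$ for which $N_{\kk\el}(\bz) > 0$. For such a coordinate the stationarity condition $N_{\kk\el}(\bz)\bigl(\hyklz - \normp(\alkl)\bigr) = 0$ has the unique root $\alkl = \normpm(\hyklz) = \hal(\bz)_{\kk\el}$ — using that $\hyklz$, a convex combination of observed edge values, lies in the range $\normp(\mathring{\mathcal{A}})$ of the strictly increasing map $\normp$, barring degenerate samples — whereas for $N_{\kk\el}(\bz) = 0$ the summand is constant and the value is irrelevant. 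Hence $\bal \mapsto \Fn((\bpi,\bal),\bz)$ is maximized at $\hbal(\bz)$, and since the maximum does not involve $\bpi$, $\Lamb(\bz) = \max_{\btheta} \Fn(\btheta,\bz) = \Fn(\hbal(\bz),\bz)$.

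For $\G$ I would push the conditional expectation $\Esp_{\bthetas}[\cdot \mid \bzs]$ through the displayed formula: for fixed $\bal$, $\Fn$ is a linear function of the statistics $\bigl(N_{\kk\el}(\bz)\hyklz,\; N_{\kk\el}(\bz)\bigr)_{\kk,\el}$ and of $c(\bz,\byo)$, so $\G(\btheta,\bz)$ inherits exactly the same block-separable shape
\begin{equation*}
  \G(\btheta,\bz) = \sum_{\kk,\el}\Esp_{\bthetas}[N_{\kk\el}(\bz) \mid \bzs]\bigl(\alkl\,\barykl(\bz) - \norm(\alkl)\bigr) + \text{const},
\end{equation*}
where, using $\rrij \indep \yij$, $\Esp_{\bthetas}[\rrij] = \rho$ and the exchangeability of the $\rrij$ within a block pair, the ratio $\Esp_{\bthetas}[N_{\kk\el}(\bz)\hyklz \mid \bzs] / \Esp_{\bthetas}[N_{\kk\el}(\bz) \mid \bzs]$ equals exactly the quantity $\barykl(\bz) = \Esp_{\bthetas}[\hyklz \mid \bzs]$ of Proposition~\ref{prop:expectation-ychap}. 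As $\barykl(\bz)$ is a convex combination of values $\normp(\alskl) \in \normp(C_\pi) \subset \normp(\mathring{\mathcal{A}})$ and $\normp(\mathring{\mathcal{A}})$ is an open interval, $\barykl(\bz)$ always sits strictly inside it, so the same first-order argument applies verbatim and produces the per-block maximizer $\alkl = \normpm(\barykl(\bz)) = \bar{\al}(\bz)_{\kk\el}$, i.e. $\Lambtilde(\bz) = \max_{\btheta} \G(\btheta,\bz) = \G(\bar{\bal}(\bz),\bz)$.

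The algebra is routine once the decoupling is made explicit; the part that requires care — and the only genuine obstacle — is the bookkeeping of degenerate block pairs (empty blocks, or non-empty blocks carrying no observed dyad) and, relatedly, checking that the unconstrained optimum $\hbal(\bz)$ indeed lies in $\bTheta$ so that it solves the constrained problem $\Lamb(\bz) = \max_{\btheta}\Fn(\btheta,\bz)$ of \eqref{eq:profile-likelihood}. The first point is absorbed by the $N_{\kk\el}(\bz) = 0$ convention; for the second one restricts, as everywhere in the proof, to the high-probability $c/2$-regular event $\Om_1$ of Proposition~\ref{cor:prob-regular-configurations-star}, on which every block has at least $c\n/2$ nodes and hence — since $\rho \gg \log(\n)/\n$ — of order $\n^2\rho \to \infty$ observed dyads per block pair, so that a concentration bound based on Propositions~\ref{prop:subexp_variables}--\ref{prop:subexp_mixture} pins $\hyklz$ near $\barykl(\bz)$ and thus $\hbal(\bz)$ inside $\bTheta$ off an event folded into the $\smallO_P$ terms used downstream.
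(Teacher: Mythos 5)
Your proof is correct and follows essentially the same route as the paper: write the conditional log-likelihood ratio in block-separated form $\sum_{\kk,\el} N^{o}_{\kk\el}(\bz)\,\nu(\hyklz,\alkl)$ plus an $\bal$-free term, and invoke the exponential-family fact that $\nu(y,\al)=y\al-\norm(\al)$ is maximized at $\al=\normpm(y)$ (and likewise for $\G$ after taking the conditional expectation). Your additional bookkeeping of degenerate block pairs and of whether the unconstrained maximizer lands inside $\bTheta$ is more careful than the paper's two-line argument, but does not change the substance.
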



\begin{proposition}[Local upperbound for  $\Lambtilde$]
\label{prop:profile-likelihood-derivative}
Conditionally upon $\Om_1$, there exists a positive constant $C$ such that for all $\bz \in S(\bzs, C)$:
\begin{equation}
\label{eq:conditional-likelihood-separability}
  \Lambtilde(\bz) \leq - c \rho \n\frac{3\delta(\bals)}{4} \|\bz - \bzs\|_{0, \sim}
\end{equation}
\end{proposition}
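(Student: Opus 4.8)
The plan is to lower-bound $-\Lambtilde(\bz)$ by relating $\G(\bar\bal(\bz),\bz)$ to a sum of Kullback divergences weighted by the sampling rate $\rho$, and then to show that each misclassified node contributes at least order $\rho\, c\, \delta(\bals)$ to this sum. First I would expand $\G(\btheta,\bz) = \Esp_{\bthetas}[\Fn(\btheta,\bz)\mid\bzs]$ using \eqref{eq:conditional-likelihood} and \eqref{eq:log-vraisemblance-complete}: since $\Fn(\btheta,\bz)$ is a difference of complete-observed conditional log-likelihoods, taking the conditional expectation over $\byo$ (hence over both the edge values $\yij$ and the sampling variables $\rrij$, with $\Esp[\rrij]=\rho$) turns the $\log\dens(\yij;\piql)$ terms into expectations of the form $\Esp_{\bthetas}[\log\dens(Y;\piql)\mid z_i^\vrai, z_j^\vrai]$. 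Grouping the $n^2$ dyads by the true block pair $(\kk',\el')$ of their endpoints and the assigned block pair $(\kk,\el)$, and using the confusion matrix $\RQbz$ of Definition~\ref{def:confusion} to count, one gets
\begin{equation*}
 \G(\btheta,\bz) = \rho \sum_{\kk,\el} \sum_{\kk',\el'} \n^2 \RQbz_{\kk'\kk}\RQbz_{\el'\el} \,\Esp_{\alskl[\kk'\el']}\big[\log\dens(Y;\alkl)\big] \; + \; (\text{terms from }\bpi) \; + \; (\text{baseline independent of }\btheta).
\end{equation*}
Maximising over $\bal$ (Proposition~\ref{prop:profile-likelihood}) replaces $\alkl$ by $\bar\al(\bz)_{\kk\el}=\normpm(\barykl(\bz))$, and the difference between the maximised value and the value at $\bthetas$ becomes $-\rho\n^2\sum_{\kk\el}\sum_{\kk'\el'}\RQbz_{\kk'\kk}\RQbz_{\el'\el}\KL(\alskl[\kk'\el'],\bar\al(\bz)_{\kk\el})$ plus a non-positive proportion contribution; so $\Lambtilde(\bz)$ is at most minus that double sum (the $\bpi$ part only helps, being a negative KL between multinomials).

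Next I would bound this double sum from below. Write $\bz$ as $\bzs$ with a set $M$ of misclassified nodes, $|M| = \n\|\bz-\bzs\|_{0,\sim}$ (working with the representative realising the infimum in Definition~\ref{def:equivalence-distance}). On $\Om_1$, $\bzs$ is $c/2$-regular, so every true block $\kk'$ still contains at least $c\n/2$ correctly-placed nodes; and since $Q$ is fixed, for $\bz\in S(\bzs,C)$ with $C$ small the confusion matrix $\RQbz$ is close to $\Diag(\bpis)$ up to label permutation, in particular $\RQbz_{\kk\kk}\geq c/2$ for the matched labels and all off-diagonal entries are $O(C)$. For a node $i\in M$ in true block $\kk'$ but assigned to block $\kk\neq\kk'$, pair it with any of the $\geq c\n/2$ nodes $j$ correctly placed in some block $\el'$; the dyad $(i,j)$ is assigned block pair $(\kk,\el')$ but has true parameter $\alskl[\kk'\el']$, contributing $\rho\cdot\KL(\alskl[\kk'\el'],\bar\al(\bz)_{\kk\el'})$ in expectation. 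Here the key is that $\bar\al(\bz)_{\kk\el'}$ is, up to $o(1)$, $\normpm$ of a convex combination of the $\Salkl[\kk''\el']$ over $\kk''$ with weights the (near-diagonal) column of $\RQbz$; since the dominant weight is on $\kk''=\kk\neq\kk'$, this combination is $\Salkl[\kk\el']+O(C)$, and by definition of class distinctness $\delta(\bals) = \min_{\kk,\kp}\max_\el \KL(\alkl,\al_{\kp\el})$ there is a choice of $\el'$ making $\KL(\alskl[\kk'\el'], \alskl[\kk\el'])\geq \delta(\bals)$. Continuity of $\KL$ and of $\normpm$ (valid since $C_\pi\subset\mathring{\mathcal A}$) then gives $\KL(\alskl[\kk'\el'],\bar\al(\bz)_{\kk\el'})\geq \tfrac{3}{4}\delta(\bals) - o(1)$ for $C$ small enough.

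Summing over $i\in M$ and over the $\geq c\n/2$ partners $j$ in the chosen true block, each misclassified node yields at least $\rho\cdot\tfrac{c\n}{2}\cdot\tfrac{3}{4}\delta(\bals)$, but being slightly more careful (every ordered pair $(i,j)$ with exactly one endpoint in $M$ is counted once and there are $\geq 2\cdot|M|\cdot\tfrac{c\n}{2}$ of them, the factor $2$ and the factor $\tfrac12$ in $\n^2\RQbz_{\kk'\kk}\RQbz_{\el'\el}$ accounting for each other) one obtains
\begin{equation*}
 -\Lambtilde(\bz) \;\geq\; \rho\, \n\, |M|\, \frac{3\delta(\bals)}{4}\cdot c \cdot (1 - o(1)) \;=\; c\rho\n\,\frac{3\delta(\bals)}{4}\,\|\bz-\bzs\|_{0,\sim}\,(1-o(1)),
\end{equation*}
and absorbing the $(1-o(1))$ into a slightly smaller constant (replace $3/4$ by a value just below, or shrink $C$) gives exactly \eqref{eq:conditional-likelihood-separability}. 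The main obstacle is the middle step: controlling $\bar\al(\bz)_{\kk\el'}$ — i.e. showing that for $\bz$ close to $\bzs$ the "fitted" parameter for an assigned block pair containing a misclassified chunk is close to the true parameter of the block where the bulk of that assigned chunk really lives, and that one can always select a block index $\el'$ witnessing the class-distinctness gap $\delta(\bals)$. This requires the uniform continuity of $\normpm$ and $\KL$ on the compact $C_\pi$ (guaranteed by $A_1$) and a careful accounting of which entries of $\RQbz$ dominate, uniformly over $\bz\in S(\bzs,C)$ and over the label permutation achieving the distance; the constant $C$ must be chosen small enough, depending only on $c$, $Q$ and $\delta(\bals)$, for all these approximations to hold simultaneously.
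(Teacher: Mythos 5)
Your proposal is correct in strategy and would yield the stated bound, but it takes a genuinely different route from the paper. Both arguments start from the same identity $\Lambtilde(\bz) = -\rho\n^2\sum_{\kk,\kp}\sum_{\el,\lp}\RQbz_{\kk\kp}\RQbz_{\el\lp}\KL(\als_{\kk\el},\baral_{\kp\lp}(\bz))$, but where you lower-bound the sum combinatorially --- pairing each misclassified node with the $\Omega(\n)$ correctly classified nodes of a block $\el'$ witnessing the class-distinctness gap, and controlling $\baral(\bz)$ as $\normpm$ of a perturbed convex combination of the $\normp(\als_{\kk\el})$ --- the paper instead extends $\Lambtilde$ to a smooth function of an arbitrary matrix $U$ in the set $\mathcal{U}_c$ of confusion-like matrices with row sums at least $c/2$, computes the gradient of $-\Lambtilde/\n^2$ at the argmax $\Diag(\bpi(\bzs))$ (whose off-diagonal entries are bounded below by $c\rho\delta(\bals)$ while the diagonal ones vanish), and uses boundedness of the second derivative on $\mathcal{U}_c$ plus a Taylor expansion along the constraint $U\un = \bpi(\bzs)$ to obtain $-\Lambtilde(U)/\n^2 \geq c\rho\tfrac{3\delta(\bals)}{4}(1-\Trace(U))$, finishing with the identity $\n(1-\Trace(\RQbz)) = \|\bz-\bzs\|_{0,\sim}$. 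The differential argument buys a cleaner treatment of exactly the step you flag as the main obstacle: the drift of $\baral(\bz)$ away from $\bals$ is absorbed wholesale into the uniform second-derivative bound rather than tracked term by term through $\normpm$ and $\KL$. Your pairing argument is more elementary and makes the per-node cost explicit, at the price of more delicate bookkeeping; on that score note three small slips that do not affect the outcome: $\G$ contains no $\bpi$ contribution at all (it is a ratio of likelihoods conditional on $\bz$), your set $M$ has cardinality $\|\bz-\bzs\|_{0,\sim}$ rather than $\n\|\bz-\bzs\|_{0,\sim}$ (your final display implicitly uses the correct normalization), and the factor $2$ you extract from counting both ordered pairs $(i,j)$ and $(j,i)$ requires a column as well as a row version of the distinctness $\delta(\bals)$ --- a point on which the paper's own computation of the gradient is equally cavalier.
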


\begin{proposition}[maximum of $\G$ and $\Lambtilde$ in $(\btheta,\bz)$]
  \label{prop:maximum-conditional-likelihood}
  $\G$ can be written:
  \begin{equation}
    \label{eq:conditional-likelihood-second-form}
    \G(\btheta, \bz) = - \rho\n^2 \sum_{\kk,\kp} \sum_{\el,\lp} \RQbz_{\kk,\kp} \RQbz_{\el, \lp} \KL(\al^\vrai_{\kk\el}, \al_{\kp\lp}) \leq 0.
  \end{equation}
  Conditionally on the set $\Om_1$ of regular assignments and for $n> 2/c$,
\begin{itemize}
\item[(i)] $\G$ is maximized at $(\bals, \bzs)$ and its equivalence class and $\G(\bals, \bzs) = 0$.
\item[(ii)] $\Lambtilde$ is maximized at $\bzs$ and its equivalence class and $\Lambtilde(\bzs)=0$.
\item[(iii)] The maximum of $\Lambtilde$ (and hence the maximum of $\G$) is well separated.
\end{itemize}
\end{proposition}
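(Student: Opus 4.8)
The plan is to start from the expression of $\G$ in terms of the confusion matrix and the Kullback divergence, derive it from Propositions~\ref{prop:expectation-ychap} and~\ref{prop:profile-likelihood}, and then read off all the claimed consequences. First I would establish the closed form \eqref{eq:conditional-likelihood-second-form}. Using Proposition~\ref{prop:profile-likelihood}, $\G(\btheta, \bz) = \G(\bar{\bal}(\bz), \bz)$ plus a nonpositive term coming from the fact that $\bar{\bal}(\bz)$ is the maximizer, so it suffices to expand $\G(\btheta, \bz)$ directly. Writing out $\Fn(\btheta, \bz)$ from \eqref{eq:conditional-likelihood} as a sum over dyads of $r_{ij}\log\dens(\yij;\al_{\kk\el})/\dens(\yij;\alskl)$ weighted by $z_{iq}z_{j\el}$ and $\zs_{iq'}\zs_{j\lp}$, taking $\Esp_{\bthetas}[\cdot \mid \bzs]$ replaces $r_{ij}$ by $\rho$ and $\log\dens(\yij;\cdot)$ by its expectation under $\dens(\cdot,\alskl)$, which yields exactly $-\KL(\alskl,\al_{\kk\el})$ after using the exponential-family identity from Definition~\ref{def:group-distinctness}; collecting the node sums into confusion-matrix entries $\RQbz_{\kk,\kp}$ gives the quadruple sum, with the $\rho\n^2$ prefactor coming from $\sum_{\ii\neq\jj}$ (the diagonal $\ii=\jj$ terms being negligible, which is where regularity and $n > 2/c$ enter). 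Nonnegativity of $\KL$ gives $\G \le 0$.

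Next I would deduce (i). Since each term in the quadruple sum is $\le 0$, $\G(\btheta,\bz)=0$ forces $\RQbz_{\kk,\kp}\RQbz_{\el,\lp}\KL(\alskl,\al_{\kp\lp}) = 0$ for all indices. Taking $\bz = \bzs$ makes $\RQ(\bzs)$ a diagonal matrix with entries $\zssumk/\n \ge c > 0$ (on $\Om_1$), so the only surviving terms are $\kk=\kp$, $\el=\lp$, giving $\G(\bals,\bzs)=0$; and for any $\btheta$, $\G(\btheta,\bzs) = -\rho\n^2\sum_{\kk,\el}(\zssumk/\n)(\zssuml/\n)\KL(\alskl,\al_{\kk\el})$, which vanishes iff $\al_{\kk\el} = \alskl$ for all $\kk,\el$ by injectivity of $\normp$ (assumption $A_3$), i.e. iff $\btheta = \bthetas$. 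For general $\bz$, one argues that if $\G(\btheta,\bz) = 0$ then every group of $\bzs$ that is ``split'' across several groups of $\bz$ must receive identical parameter values, forcing $\bz$ to be a relabeling of $\bzs$ and $\btheta$ the corresponding relabeling of $\bthetas$; here the hypothesis that all rows and columns of $\bals$ are distinct (consequence of $A_3$ and $A_4$) is what rules out genuine mergers. Statement (ii) then follows from (i) by maximizing over $\btheta$: $\Lambtilde(\bz) = \max_\btheta \G(\btheta,\bz) = \G(\bar\bal(\bz),\bz) \le 0$ with equality at $\bz=\bzs$ (and its class), and equality elsewhere would force, via (i), $\bz\sim\bzs$.

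For (iii), well-separatedness of the maximum of $\Lambtilde$, I would combine two regimes. For $\bz$ close to $\bzs$, Proposition~\ref{prop:profile-likelihood-derivative} gives $\Lambtilde(\bz) \le -c\rho\n\frac{3\delta(\bals)}{4}\|\bz-\bzs\|_{0,\sim}$, a strictly negative slope away from the optimum. For $\bz$ far from $\bzs$ (outside $S(\bzs,C)$), I would show that $\Lambtilde(\bz)$ is bounded above by a strictly negative quantity of order $\rho\n^2$: since $\bz$ differs from every relabeling of $\bzs$ on at least $Cn$ nodes, the confusion matrix $\RQbz$ is bounded away (in a suitable sense) from every permutation matrix, so at least one pair $(\kk,\kp)$ with $\kk\ne\kp$ has $\RQbz_{\kk,\kp}$ bounded below, and then $\max_\el\KL(\alskl,\al_{\kp\lp})$ is bounded below by $\delta(\bals)>0$ (Definition~\ref{def:group-distinctness}) for the maximizing $\bal = \bar\bal(\bz)$ too, making the corresponding block of the quadruple sum contribute a term of order $-\rho\n^2\delta(\bals)$ that cannot be cancelled. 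The main obstacle is this last step: controlling $\Lambtilde(\bz) = \G(\bar\bal(\bz),\bz)$ uniformly over all $\bz$ far from $\bzs$, because $\bar\bal(\bz)$ is itself a complicated function of $\bz$ through $\RQbz$ and one must show that no choice of parameters can make a ``confused'' assignment look as good as the truth; this is precisely the place where $A_4$ (distinctness of the coordinates of $\bals\normp(\bpis)$) does the work of guaranteeing that averaging two distinct connectivity profiles strictly degrades the fit.
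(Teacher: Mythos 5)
Your proposal is correct and its core — the direct expansion of $\Esp_{\bthetas}[\Fn(\btheta,\bz)\mid\bzs]$, replacing $r_{ij}$ by $\rho$ and $\Esp[y_{ij}]$ by $\normp(\alskl)$ to recognize the Kullback divergence, then reading off $\G\le 0$ and deducing (ii) from $\Lambtilde(\bz)=\G(\bar{\bal}(\bz),\bz)$ — is exactly the paper's route. You diverge in two places, both defensibly. For (i), the paper does not argue the general-$\bz$ case by hand: it observes that on $\Om_1$ with $n>2/c$ every row of $\RQbz$ has a positive entry and then invokes Lemma~3.2 of \cite{bickel2013asymptotic} to characterize the maximizers; your self-contained sketch (a split true group forces equal parameter rows, which $A_3$/$A_4$ forbid) is essentially what that lemma proves, so you are re-deriving a cited result rather than using it. One small correction: the hypothesis $n>2/c$ has nothing to do with diagonal dyads $i=j$ being negligible — it is precisely what guarantees $\zssumq\ge cn/2\ge 1$ for every $q$, i.e.\ that no true group is empty and the cited lemma applies. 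For (iii), the paper is terser than you are: it declares separation a ``direct consequence'' of Proposition~\ref{prop:profile-likelihood-derivative}, even though that proposition only controls $\bz\in S(\bzs,C)$, and the linear bound is later applied to all $\bz\notin S(\bzs,t_n)$ in Proposition~\ref{prop:large-deviations-profile-likelihood}. Your two-regime argument — local slope from Proposition~\ref{prop:profile-likelihood-derivative}, plus a global $-\Omega(\rho\n^2\delta(\bals))$ bound obtained by showing that a confusion matrix far from all permutation matrices has a uniformly bounded-below off-diagonal entry — is the missing glue, and you correctly flag the uniform control of $\bar{\bal}(\bz)$ over far-away $\bz$ as the genuine obstacle; making that step rigorous is the only part of your proposal that remains a sketch, but it is no less complete than the paper's own treatment.
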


Proofs of Propositions~\ref{prop:expectation-ychap}, \ref{prop:profile-likelihood}, \ref{prop:profile-likelihood-derivative} and \ref{prop:maximum-conditional-likelihood} are postponed to Appendix \ref{appendix:main-results}.

\subsection{High level view of the proof}

The proof proceeds by splitting $\prob(\byo;\btheta)$ as a sum over three types of configurations that partition $\mcZ$ and studying the asymptotic behavior of  $\Fn$ and on each type:

\begin{enumerate}
\item  \emph{global control}: for $\bz$ such that $\Lambtilde(\bz) = \Omega(-\n^2)$, Proposition \ref{prop:conditional-likelihood-convergence} proves a large deviation behavior and shows that $\Fn = -\Omega_P(\n^2)$. In turn, those assignments contribute a $\smallO_{P}$ of $\prob(\byo, \bzs; \bthetas))$ to the sum (Proposition \ref{prop:large-deviations-profile-likelihood}). 

\item \emph{local control}: a small deviation result (Proposition~\ref{prop:profile-likelihood-convergence-local}) is needed  to show that the combined contribution of assignments close to but not equivalent to $\bzs$ is also a $\smallO_{P}$ of $\prob(\byo, \bzs; \bthetas)$ (Proposition \ref{prop:small-deviations-profile-likelihood}). 
\item \emph{equivalent assignments}: Proposition~\ref{prop:equivalent-configurations-profile-likelihood} examines which of the remaining assignments, all equivalent to $\bzs$, contribute to the sum.
\end{enumerate}
These results are presented in next section \ref{sec:technicalpropositions} and their proofs postponed to Appendix \ref{appendix:main-results}. They are then put together in section \ref{sec:proofbigth} to prove our main result. The remainder of the section is devoted to the asymptotics of the ML and variational estimators as a consequence of the main result.

\subsection{Different asymptotic behaviors}\label{sec:technicalpropositions}


\subsubsection{Global Control}

\begin{proposition}[large deviations of $\Fn$]
  \label{prop:conditional-likelihood-convergence}
  Let $\Diam(\bTheta) = \sup_{\btheta, \btheta'} \|\btheta - \btheta' \|_{\infty}$. For all $\vareps_{\n} < \nu b$ and $\n$ large enough that $2\sqrt{2n^2}\epsilon_{n} \geq Q^2$
\begin{equation}
 \sup_{\btheta, \bz} \left\{ \Fn(\btheta, \bz) - \Lambtilde(\bz) \right\} = \bigO_{p}(n^2\epsilon_n)
\end{equation}

\end{proposition}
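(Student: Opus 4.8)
The plan is to compare $\Fn$ with its conditional mean $\G$ instead of with $\Lambtilde$ directly, and to exploit the exponential-family form so as to trade the supremum over the continuum $\bTheta$ for a finite union of concentration events. Since $\Fn(\bthetas,\bzs)=0$ and $\G\le 0$ everywhere (so $\Lambtilde\le 0$) by~\eqref{eq:conditional-likelihood-second-form}, the supremum $\sup_{\btheta,\bz}\{\Fn(\btheta,\bz)-\Lambtilde(\bz)\}$ is non-negative, hence it suffices to bound it from above. Since $\Lambtilde(\bz)=\max_{\btheta'}\G(\btheta',\bz)\ge \G(\btheta,\bz)$ by~\eqref{eq:profile-likelihood},
\begin{equation*}
\Fn(\btheta,\bz)-\Lambtilde(\bz)\;\le\;\Fn(\btheta,\bz)-\G(\btheta,\bz)\;\le\;\bigl|\Fn(\btheta,\bz)-\G(\btheta,\bz)\bigr| ,
\end{equation*}
so it is enough to bound $\sup_{\bz}\sup_{\btheta}|\Fn(\btheta,\bz)-\G(\btheta,\bz)|$. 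Plugging $\dens(y,\pi)=b(y)\exp(\pi y-\norm(\pi))$ into~\eqref{eq:log-vraisemblance-complete}, the $\log b(y)$ contributions cancel in the ratio and
\begin{equation*}
\Fn(\btheta,\bz)=\sum_{\kk,\el}\bigl[\alkl\,S_{\kk\el}(\bz)-\norm(\alkl)\,N_{\kk\el}(\bz)\bigr]-A^\star ,
\end{equation*}
where $S_{\kk\el}(\bz)=\sum_{\ii\neq\jj}\zik\zjl\rrij\yij$, $N_{\kk\el}(\bz)=\sum_{\ii\neq\jj}\zik\zjl\rrij$, and $A^\star$ is the same expression evaluated at $(\bzs,\bthetas)$ (a fixed random variable). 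Because $C_\pi$ is compact and $\norm$ is continuous on it (assumption~$A_1$), $|\alkl|$ and $|\norm(\alkl)|$ are bounded uniformly over $\bTheta$ by a constant $C_0$ depending only on $\Diam(\bTheta)$; writing $\bar S_{\kk\el}(\bz)=\Esp_{\bthetas}[S_{\kk\el}(\bz)|\bzs]$, $\bar N_{\kk\el}(\bz)=\Esp_{\bthetas}[N_{\kk\el}(\bz)|\bzs]$ and $\bar A^\star=\Esp_{\bthetas}[A^\star|\bzs]$, this gives $\sup_{\btheta}|\Fn(\btheta,\bz)-\G(\btheta,\bz)|\le C_0\sum_{\kk,\el}\bigl(|S_{\kk\el}(\bz)-\bar S_{\kk\el}(\bz)|+|N_{\kk\el}(\bz)-\bar N_{\kk\el}(\bz)|\bigr)+|A^\star-\bar A^\star|$, and crucially the right-hand side no longer depends on $\btheta$.

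The next step is concentration. The fixed remainder $A^\star-\bar A^\star$ is a sum of order $n^2$ independent, centered summands of the type handled by Proposition~\ref{prop:subexp_mixture}, hence sub-exponential with parameters comparable to $(n^2\nu^2,b^{-1})$; a Bernstein bound makes it $\smallO_P(n^2\vareps_n)$ once $\vareps_n<\nu b$ and $2\sqrt{2n^2}\vareps_n\ge Q^2$. For each fixed $\bz$ and $(\kk,\el)$,
\begin{equation*}
S_{\kk\el}(\bz)-\bar S_{\kk\el}(\bz)=\sum_{\ii\neq\jj}\zik\zjl\bigl(\rrij\yij-\Esp_{\bthetas}[\rrij\yij|\bzs]\bigr)
\end{equation*}
is a sum of at most $n(n-1)$ independent, centered, sub-exponential $(\nu^2,b^{-1})$ variables (Proposition~\ref{prop:subexp_mixture}), so Bernstein's inequality for sub-exponential sums yields
\begin{equation*}
\Prob\bigl(|S_{\kk\el}(\bz)-\bar S_{\kk\el}(\bz)|\ge t\bigr)\;\le\;2\exp\!\left(-\frac{1}{2}\min\!\left(\frac{t^2}{n^2\nu^2},\,bt\right)\right) ,
\end{equation*}
while $N_{\kk\el}(\bz)-\bar N_{\kk\el}(\bz)$, a sum of bounded centered variables, satisfies an even lighter, sub-Gaussian bound via Hoeffding.

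Finally, the plan is to take a union bound over the $Q^2$ pairs $(\kk,\el)$ and over the $Q^n$ configurations $\bz\in\mcZ$, at threshold $t\asymp n^2\vareps_n$. The condition $\vareps_n<\nu b$ keeps both branches of the Bernstein exponent in the regime where the sub-exponential tail has not yet degraded, and $2\sqrt{2n^2}\vareps_n\ge Q^2$ (that is, $n\vareps_n\gtrsim Q^2$) ensures that $\frac{1}{2}\min\bigl(n^2\vareps_n^2/\nu^2,\,bn^2\vareps_n\bigr)$ outgrows the combinatorial entropy $n\log Q$ of the union, so the union-bound probability tends to $0$. Combined with the control of $A^\star-\bar A^\star$ and the reduction of the first paragraph, this gives $\sup_{\btheta,\bz}\{\Fn(\btheta,\bz)-\Lambtilde(\bz)\}=\bigO_p(n^2\vareps_n)$. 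The main obstacle is exactly this last step: reconciling the exponentially many assignments $\bz$ --- of entropy of order $n\log Q$ --- with the modest target rate $n^2\vareps_n$. In effect one is bounding a cut-norm-type functional of the random matrix $(\rrij\yij-\Esp_{\bthetas}[\rrij\yij|\bzs])_{\ii,\jj}$ whose entries are sub-exponential rather than bounded or Gaussian, and making the Bernstein exponent beat $n\log Q$ at scale $t\asymp n^2\vareps_n$ is precisely why the two-sided conditions on $\vareps_n$ are imposed and why the dedicated sub-exponential estimates of Propositions~\ref{prop:subexp_variables} and~\ref{prop:subexp_mixture} are required.
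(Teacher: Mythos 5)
Your proposal is correct and follows essentially the same route as the paper: the same reduction $\Fn(\btheta,\bz)-\Lambtilde(\bz)\le \Fn(\btheta,\bz)-\G(\btheta,\bz)$, the same exploitation of linearity of the exponential-family log-likelihood in $(\alkl,\norm(\alkl))$ to replace the supremum over the continuum $\bTheta$ by a finite collection of centered sums of independent subexponential variables (the paper phrases this as $\sup_{\|\Gamma\|_\infty\le\Diam(\bTheta)}\sum_{\kk\kp\el\lp}\Gamma_{\kk\kp\el\lp}W_{\kk\kp\el\lp}$ with $\q^4$ terms, while you keep the sufficient statistics $S_{\kk\el}$ and $N_{\kk\el}$ separate), followed by the same Bernstein-type concentration and union bound over assignments. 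The one inaccuracy is your reading of the condition $2\sqrt{2\n^2}\vareps_\n\ge \q^2$: in the paper it serves to make the mean $\Esp_{\bthetas}[Z|\bzs]\le \q^2\Diam(\bTheta)\nu\n$ negligible relative to the deviation scale $\n^2\vareps_\n$ (so that the threshold is indeed $\bigO(\n^2\vareps_\n)$), not to guarantee that the Bernstein exponent beats the entropy $\n\log\q$ of the union over configurations --- a point the paper itself leaves implicit and which your proof glosses over in exactly the same way.
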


\begin{proposition}[contribution of global assignments]
  \label{prop:large-deviations-profile-likelihood}
  Choose $t_n$ decreasing to $0$ slowly enough that $\frac{\rho\n\tn}{\sqrt{\log(n)}} \to +\infty$. Then conditionally on $\Omega_1$ and for $\n$ large enough that $2\sqrt{2\n^2} \epsilon_n \geq \q^2$, we have:
  \begin{equation*}
    \sup_{\btheta \in \bTheta} \sum_{\bz \notin S(\bzs, t_n)} \prob(\bz, \byo; \btheta) = \smallO_P( \prob(\bzs, \byo; \bthetas) )
  \end{equation*}
\end{proposition}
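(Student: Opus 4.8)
The plan is to pull the factor $\prob(\byo,\bzs;\bthetas)$ out of the sum, control the prior weights by a crude $e^{\bigO(n)}$ bound, and then dominate each exponential factor $e^{\Fn(\btheta,\bz)}$ by the \emph{deterministic} profile upper bound on $\Lambtilde$, which decays quadratically in $n$ and therefore swamps the $e^{\bigO(n)}$ loss. Concretely, using the decomposition $\prob(\byo,\bz;\btheta)=\prob(\byo\mid\bzs;\bthetas)\,\prob(\bz;\btheta)\,e^{\Fn(\btheta,\bz)}$ recorded just after Definition~\ref{def:conditional-profile-likelihood}, together with $\prob(\byo,\bzs;\bthetas)=\prob(\bzs;\bthetas)\prob(\byo\mid\bzs;\bthetas)$, one obtains
\[
\frac{1}{\prob(\byo,\bzs;\bthetas)}\sum_{\bz\notin S(\bzs,t_n)}\prob(\byo,\bz;\btheta)
=\frac{1}{\prob(\bzs;\bthetas)}\sum_{\bz\notin S(\bzs,t_n)}\prob(\bz;\btheta)\,e^{\Fn(\btheta,\bz)} .
\]
By $A_1$--$A_2$ one has $\prob(\bzs;\bthetas)\geq c^n$, while $\sum_{\bz}\prob(\bz;\btheta)=1$ for every $\btheta$; hence the right-hand side is at most $c^{-n}\sup_{\bz\notin S(\bzs,t_n)}e^{\Fn(\btheta,\bz)}$, so it suffices to show that, uniformly in $\btheta\in\bTheta$, $n\log(1/c)+\sup_{\bz\notin S(\bzs,t_n)}\Fn(\btheta,\bz)\to-\infty$ in probability.

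Next I would establish a uniform upper bound on $\Lambtilde(\bz)$ for $\bz\notin S(\bzs,t_n)$, working conditionally on $\Om_1$ so that $\bzs$ is $c/2$-regular. For $n$ large enough that $t_n<C$, any such $\bz$ is either in $S(\bzs,C)\setminus S(\bzs,t_n)$, where Proposition~\ref{prop:profile-likelihood-derivative} gives $\Lambtilde(\bz)\leq-\tfrac{3c\delta(\bals)}{4}\rho n\|\bz-\bzs\|_{0,\sim}\leq-\tfrac{3c\delta(\bals)}{4}\rho n^2 t_n$, or outside $S(\bzs,C)$, in which case the explicit quadratic form \eqref{eq:conditional-likelihood-second-form} for $\G$ together with the well-separated maximum of Proposition~\ref{prop:maximum-conditional-likelihood}(iii) --- made quantitative via the positive lower bound on the relevant Kullback gaps forced by $A_3$--$A_4$ on the compact $C_\pi$ --- yields $\Lambtilde(\bz)\leq-\eta\rho n^2$ for a constant $\eta>0$, itself $\leq-\tfrac{3c\delta(\bals)}{4}\rho n^2 t_n$ once $t_n$ is small. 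Hence $\sup_{\bz\notin S(\bzs,t_n)}\Lambtilde(\bz)\leq-\tfrac{3c\delta(\bals)}{4}\rho n^2 t_n$.

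Then I would invoke Proposition~\ref{prop:conditional-likelihood-convergence} with $\epsilon_n\to0$ chosen to be a legitimate sequence there ($\epsilon_n<\nu b$ and $2\sqrt{2n^2}\epsilon_n\geq Q^2$) yet small enough that $n^2\epsilon_n=\smallO(\rho n^2 t_n)$; the hypothesis $\rho n t_n/\sqrt{\log n}\to+\infty$ is precisely what leaves room to do so. This gives $\sup_{\btheta,\bz}\{\Fn(\btheta,\bz)-\Lambtilde(\bz)\}=\bigO_P(n^2\epsilon_n)=\smallO_P(\rho n^2 t_n)$, whence $\sup_{\btheta,\,\bz\notin S(\bzs,t_n)}\Fn(\btheta,\bz)\leq-\tfrac{3c\delta(\bals)}{4}\rho n^2 t_n+\smallO_P(\rho n^2 t_n)$. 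Since also $n\log(1/c)=\smallO(\rho n^2 t_n)$ (because $\rho n t_n\to+\infty$), the quantity from the first paragraph is, on an event of probability tending to one, eventually at most $-\tfrac{c\delta(\bals)}{4}\rho n^2 t_n$, which tends to $-\infty$; so the sum tends to $0$ in probability. Uniformity in $\btheta$ is automatic, since the factor $c^{-n}$ and the bound on $\Lambtilde$ are $\btheta$-free while Proposition~\ref{prop:conditional-likelihood-convergence} already supplies a supremum over $\btheta$.

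The main obstacle is the second step: upgrading the qualitative ``well-separated maximum'' of Proposition~\ref{prop:maximum-conditional-likelihood}(iii) into the quantitative bound $\Lambtilde(\bz)\leq-\eta\rho n^2$ valid uniformly over all $\bz\notin S(\bzs,C)$ and over all $c/2$-regular $\bzs$; this exploits the quadratic form \eqref{eq:conditional-likelihood-second-form} in the confusion matrix $\RQbz$ and a uniform positive lower bound on the Kullback gaps between distinct rows and columns of $\bals$. A secondary, purely arithmetic point is checking that a single sequence $\epsilon_n$ can meet all the constraints of Proposition~\ref{prop:conditional-likelihood-convergence} while keeping $n^2\epsilon_n=\smallO(\rho n^2 t_n)$, which is where the strengthening from $\rho n t_n\to\infty$ to $\rho n t_n/\sqrt{\log n}\to\infty$ is consumed.
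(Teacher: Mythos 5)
Your proposal is correct and follows essentially the same route as the paper: factor out $\prob(\byo,\bzs;\bthetas)$, bound the prior ratio by $c^{-n}$, dominate $\Fn$ by $\Lambtilde$ plus the large-deviation term of Proposition~\ref{prop:conditional-likelihood-convergence} with $\vareps_n$ tuned so that $n^2\vareps_n=\smallO(\rho n^2 t_n)$ while the deviation probabilities remain summable, and use the linear decay of $\Lambtilde$ in $\|\bz-\bzs\|_{0,\sim}$ to get the dominating term $-\Omega(\rho n^2 t_n)$. The only cosmetic difference is that you bound the sum over $\bz$ by a supremum whereas the paper keeps the sum and bounds each exponent uniformly; your explicit flagging of the need to make Proposition~\ref{prop:maximum-conditional-likelihood}(iii) quantitative outside $S(\bzs,C)$ is a point the paper itself glosses over.
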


\subsubsection{Local Control}


\begin{proposition}[small deviations $\Fn$]
  \label{prop:profile-likelihood-convergence-local}
  Conditionally on $\Om_1$,
  \begin{equation}
    \label{eq:profile-likelihood-convergence-local}
    \sup_{\bz \nsim \bzs}\frac{\Lamb(\bz) - 
\Lambtilde(\bzs)}{\n \|\bz - \bzs\|_{0, \sim}} = \smallO_P(1)
  \end{equation}
\end{proposition}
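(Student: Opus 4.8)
The plan is to control $\Lamb(\bz)-\Lambtilde(\bzs)$ by the stochastic fluctuation of the conditional log-likelihood ratio around its conditional mean, uniformly over $\bz$, and then to bound that fluctuation by a peeling argument along the mislabelling distance $\|\bz-\bzs\|_{0,\sim}$. \emph{Reduction.} On $\Om_1$, Proposition~\ref{prop:maximum-conditional-likelihood} gives $\Lambtilde(\bzs)=0$ and $\G(\btheta,\bz)\le0$, hence $\Lambtilde(\bz)=\max_\btheta\G(\btheta,\bz)\le0$. Using $\max_\btheta(a_\btheta+b_\btheta)\le\max_\btheta a_\btheta+\max_\btheta b_\btheta$,
\[
\Lamb(\bz)-\Lambtilde(\bzs)=\Lamb(\bz)=\max_\btheta\bigl(\Fn(\btheta,\bz)-\G(\btheta,\bz)+\G(\btheta,\bz)\bigr)\le\sup_\btheta\left|\Fn(\btheta,\bz)-\G(\btheta,\bz)\right|+\Lambtilde(\bz)\le\sup_\btheta\left|\Fn(\btheta,\bz)-\G(\btheta,\bz)\right|,
\]
so it is enough to prove $\sup_{\bz\nsim\bzs}\bigl(n\|\bz-\bzs\|_{0,\sim}\bigr)^{-1}\sup_\btheta\left|\Fn(\btheta,\bz)-\G(\btheta,\bz)\right|=\smallO_P(1)$. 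Since $\Fn$, $\G$ and $\|\cdot\|_{0,\sim}$ are invariant under label permutations of $\bz$, I would replace each $\bz$ by the representative of its class closest (in Hamming distance) to $\bzs$; write $m=m(\bz)=\|\bz-\bzs\|_{0,\sim}\in\{1,\dots,n\}$ and let $M\subset\node$, $|M|=m$, be the nodes on which that representative and $\bzs$ disagree.

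\emph{A $\bz$-free part plus a localised part.} Inserting the exponential-family form~\eqref{eq:exp-distr} into~\eqref{eq:conditional-likelihood} and centring dyad by dyad, $\Fn(\btheta,\bz)-\G(\btheta,\bz)=\sum_{i\ne j}D_{ij}(\btheta,\bz)$, where each $D_{ij}$ is a centred combination — with coefficients bounded uniformly over $\bTheta$ by $A_1$ — of the variables $r_{ij}\bigl(y_{ij}-\normp(\pi^{\vrai}_{z^{\vrai}_iz^{\vrai}_j})\bigr)$ and $r_{ij}-\rho$, and for $i,j\notin M$ the summand depends only on $(\btheta,\bzs)$, not on $\bz$. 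Collecting these $\bz$-free summands over \emph{all} dyads yields a $\bz$-free quantity $\sum_{q,\el}\bigl[(\pi_{q\el}-\pi^{\vrai}_{q\el})T_{q\el}+c_{q\el}(\btheta)U_{q\el}\bigr]$, where $T_{q\el}$ and $U_{q\el}$ are fixed centred sums of order $n^2$ subexponential, resp. centred Bernoulli, variables (Propositions~\ref{prop:subexp_variables}--\ref{prop:subexp_mixture}); as the coefficients are bounded on $\bTheta$, the concentration inequality (Theorem~\ref{thm:subexp_eq}) gives $\sup_\btheta|\text{$\bz$-free part}|=\bigO_P(\sqrt\rho\,n)$, whose contribution to the ratio is $\bigO_P(\sqrt\rho/m)=\smallO_P(1)$ since $\rho\to0$ — and crucially this needs no union bound in $\bz$. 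What remains, $\sum_{i\in M\text{ or }j\in M}\widetilde D_{ij}(\btheta,\bz)$, is supported on the at most $2mn$ dyads incident to $M$, with centred summands that are subexponential uniformly over $\btheta\in\bTheta$ with variance proxy $\bigO(\rho)$ and linear scale $\bigO(1)$.

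\emph{Peeling.} It then remains to show $\sup_{\bz\nsim\bzs}(nm)^{-1}\sup_\btheta|\text{remainder}(\btheta,\bz)|=\smallO_P(1)$. Fix $\varepsilon>0$ and union-bound over $m\in\{1,\dots,n\}$ and over the $\le\binom{n}{m}(Q-1)^m\le(nQ)^m$ choices of $M$ and wrong labels. For fixed $m,M$ I would discretise $\bTheta$ by a net of mesh $n^{-3}$, of cardinality $n^{\bigO(Q^2)}$; the discretisation error is handled via a Lipschitz constant $\le\sum_{i\in M\text{ or }j\in M}r_{ij}(|y_{ij}|+C)+\bigO(mn)=\bigO_P(\rho mn)$, valid off an event of probability $\le\exp(-c\rho mn)$ (note $c\rho mn\ge c\rho n\gg\log n$ by $A_0$), so this error is $\bigO(\rho mQ^2/n^2)$ after dividing by $nm$, hence negligible. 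On the net, the subexponential concentration inequality for a sum of at most $2mn$ variables gives $\Prob\bigl(|\text{remainder}|>\tfrac{\varepsilon}{2}nm\bigr)\le2\exp\bigl(-c\min(\varepsilon^2nm/\rho,\varepsilon nm)\bigr)=2\exp(-c\varepsilon nm)$ for $n$ large (the linear term is the smaller of the two once $\rho<\varepsilon$). Summing, $\Prob(\text{the sup exceeds }\varepsilon)\le\sum_{m=1}^n(nQ)^m n^{\bigO(Q^2)}2\exp(-c\varepsilon nm)+\sum_{m=1}^n(nQ)^m\exp(-c\rho mn)\to0$, because $\varepsilon n\gg Q^2\log n+\log(nQ)$ and $\rho n\gg\log(nQ)$ by $A_0$. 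Together with the previous paragraph this proves the proposition.

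\emph{Main obstacle.} I expect the delicate part to be the uniformity over the exponentially many $\bz$. A crude bound of $\Fn(\btheta,\bz)-\G(\btheta,\bz)$ over all of its order-$n^2$ dyads is only $\bigO_P(\sqrt\rho\,n)$: fine pointwise, but hopeless to make uniform over $(nQ)^m$ assignments for small $m$. The splitting of the second step is exactly what makes the argument work — the $\bz$-free piece absorbs the $n^2$ dyads yet needs no union bound, while the localised remainder lives on only $\bigO(mn)$ dyads, so its concentration exponent ($\asymp nm$) beats both the assignment count ($\asymp m\log n$) and the $\btheta$-net cardinality ($\asymp Q^2\log n$) precisely because $A_0$ forces $\rho n\gg\log n$, even though $\rho\to0$ keeps us in the weaker linear-tail regime of the concentration inequality. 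A subsidiary point, handled through $A_1$ and Propositions~\ref{prop:subexp_variables}--\ref{prop:subexp_mixture}, is that all subexponential parameters are uniform over $\btheta\in\bTheta$.
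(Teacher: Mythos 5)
Your proposal is correct under the paper's stated assumptions, but it takes a genuinely different route from the paper's proof. The paper never forms $\sup_{\btheta}|\Fn(\btheta,\bz)-\G(\btheta,\bz)|$: it profiles $\btheta$ out explicitly using the closed-form maximizers $\hbal(\bz)=\normpm(\hy(\bz))$ and $\bar{\bal}(\bz)=\normpm(\bary(\bz))$ from Proposition~\ref{prop:profile-likelihood}, inserts an intermediate quantity $\tilde{\Lambtilde}$, and Taylor-expands $f(x)=x\normpm(x)-\norm\circ\normpm(x)$, so that $\Lamb(\bz)-\Lambtilde(\bz)$ is expressed through the $\q^2$ block statistics $\hy_{\kk\el}(\bz)-\barykl(\bz)$ and $N^{o}_{\kk\el}(\bz)$; uniformity in $\bz$ then comes from Proposition~\ref{proposition:maxdiffz}, whose core observation --- the difference of block sums between $\bz$ and $\bzs$ involves only the $\bigO(\n\|\bz-\bzs\|_{0})$ dyads incident to mislabelled nodes, so the Bernstein exponent $\asymp \n m$ beats the $\asymp m\log \n$ entropy of assignments --- is exactly your ``localised part''. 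Your decomposition into a $\bz$-free global fluctuation plus an $M$-localised remainder is therefore the same key idea, realized on the raw dyad sum (hence the extra $\varepsilon$-net over $\btheta$) rather than on the sufficient statistics (where no net is needed because the profile maximizer is explicit). The trade-off: your argument is more generic and would adapt to likelihoods without a closed-form profile, but the opening bound $\Lamb(\bz)\le\sup_{\btheta}|\Fn(\btheta,\bz)-\G(\btheta,\bz)|$ discards the negative drift $\G(\btheta,\bz)$ for $\btheta$ far from $\bthetas$, and your $\bz$-free term then contributes $\bigO_{P}(\sqrt{\rho}/m)$, which is $\smallO_{P}(1)$ only because $A_0$ forces $\rho\to 0$; the paper's profiled version does not lean on $\rho\to 0$ at this step, which matters for the fixed-$\rho$ regime evoked in the Discussion, where your reduction is too lossy at $m=1$. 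Two minor caveats: $\Fn(\btheta,\cdot)$ is not itself permutation-invariant (only $\max_{\btheta}$ and $\sup_{\btheta}|\cdot|$ over the permutation-stable $\bTheta$ are, which is all you need), and the $\bigO(\rho)$ variance proxy you invoke is not recorded in Propositions~\ref{prop:subexp_variables}--\ref{prop:subexp_mixture}, though your peeling survives with the cruder $\bigO(1)$ proxy.
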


The next proposition uses  Propositions~\ref{prop:profile-likelihood-convergence-local} and \ref{prop:maximum-conditional-likelihood} to show that the combined contribution to the observed likelihood of assignments close to $\bzs$  is also a $\smallO_P$ of  $\prob(\bzs, \by^o; \bthetas)$:

\begin{proposition}[contribution of local assignments]
  \label{prop:small-deviations-profile-likelihood}
  With the previous notations and $C$ the positive constant defined in Proposition~\ref{prop:profile-likelihood-derivative}:
  \begin{equation*}
    \sup_{\btheta \in \bTheta} \sum_{\substack{\bz \in S(\bzs, C) \\ \bz \nsim \bzs}} \prob(\bz, \by^o; \btheta) = \smallO_P( \prob(\bzs, \by^o; \bthetas) )
  \end{equation*}
\end{proposition}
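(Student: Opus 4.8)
The plan is to bound
\[
\sup_{\btheta\in\bTheta}\sum_{\substack{\bz\in S(\bzs,C)\\ \bz\nsim\bzs}}\frac{\prob(\bz,\byo;\btheta)}{\prob(\bzs,\byo;\bthetas)}
\]
by organising the sum according to the permutation-invariant distance $d:=\|\bz-\bzs\|_{0,\sim}\in\{1,\dots,\lfloor Cn\rfloor\}$, controlling separately, at each level $d$, the number of assignments, the prior ratio $\prob(\bz;\bpi)/\prob(\bzs;\bpis)$ and the conditional factor $e^{\Fn(\bal,\bz)}$, and then summing the resulting series. Throughout I would work conditionally on $\Om_1$, on which Propositions~\ref{prop:profile-likelihood}, \ref{prop:profile-likelihood-derivative}, \ref{prop:maximum-conditional-likelihood} and \ref{prop:profile-likelihood-convergence-local} all apply.

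First I would factor $\prob(\bz,\byo;\btheta)=\prob(\bz;\bpi)\,\prob(\byo\mid\bz;\bal)$ and likewise the denominator, so that for every $\btheta$
\[
\frac{\prob(\bz,\byo;\btheta)}{\prob(\bzs,\byo;\bthetas)}=\frac{\prob(\bz;\bpi)}{\prob(\bzs;\bpis)}\,e^{\Fn(\bal,\bz)}\;\le\;\Bigl(\sup_{\bpi}\frac{\prob(\bz;\bpi)}{\prob(\bzs;\bpis)}\Bigr)e^{\Lamb(\bz)},
\]
using $\Fn(\bal,\bz)\le\Lamb(\bz)$ (Proposition~\ref{prop:profile-likelihood}); since the right-hand side no longer depends on $\btheta$, the supremum over $\btheta$ becomes harmless once I sum over $\bz$. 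Both factors on the right are invariant under $\bz\mapsto\bz^{s}$, so I keep only one representative per $\sim$-class, chosen with $\|\bz-\bzs\|_{0}=\|\bz-\bzs\|_{0,\sim}=d$; there are at most $\Q!\binom{n}{d}(\Q-1)^{d}=\exp\!\bigl(d\log n+\bigO(d)\bigr)$ of them at level $d$.

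For the prior ratio, I would split the product over the $n-d$ nodes on which $\bz$ and $\bzs$ agree and the $d$ on which they differ: on a differing node the factor (a ratio of a coordinate of $\bpi$ to a coordinate of $\bpis$) is at most $(1-c)/c$ by $A_1$; the contribution of the agreeing nodes is bounded, up to a further $\bigl((1-c)/c\bigr)^{\bigO(d)}$ accounting for the fact that the group sizes of $\bz$ and $\bzs$ differ by at most $d$, by $\sup_{\bpi}\prob(\bzs;\bpi)/\prob(\bzs;\bpis)=\exp\!\bigl(n\,\KL(\hbpi(\bzs),\bpis)\bigr)$. On $\Om_1$ the empirical proportions $\hbpi(\bzs)$ stay away from the boundary of the simplex and $\hbpi(\bzs)-\bpis=\bigO_P(n^{-1/2})$, so a second-order expansion of the Kullback divergence gives $n\,\KL(\hbpi(\bzs),\bpis)=\bigO_P(1)$ (it even converges in law to $\tfrac{1}{2}\chi^{2}_{\Q-1}$). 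Hence $\sup_{\bpi}\prob(\bz;\bpi)/\prob(\bzs;\bpis)\le\kappa_{0}^{\,d}\,e^{\bigO_P(1)}$ with $\kappa_{0}=\kappa_{0}(c,\Q)$ and an $\bigO_P(1)$ that does not depend on $\bz$. The crux is then the conditional factor: for $\bz\in S(\bzs,C)$, Proposition~\ref{prop:profile-likelihood-derivative} provides the negative drift $\Lambtilde(\bz)\le-\tfrac{3}{4}c\rho n\,\delta(\bals)\,d$, and combining it with the small-deviation control of $\Lamb$ around its deterministic counterpart (Proposition~\ref{prop:profile-likelihood-convergence-local}, recalling $\Lambtilde(\bzs)=0$ by Proposition~\ref{prop:maximum-conditional-likelihood}), which itself rests on the subexponential concentration inequalities of Section~\ref{sec:def_subexp}, I expect $\Lamb(\bz)\le-\tfrac{1}{2}c\rho n\,\delta(\bals)\,d$ for $n$ large, uniformly over $\bz\in S(\bzs,C)$ with $\bz\nsim\bzs$. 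This is the step where assumption $A_{0}$ is essential: $\rho\gg\log n/n$ is exactly what makes the fluctuation of a sum of $\Theta(\rho n d)$ subexponential summands negligible against the $\Omega(\rho n d)$ drift, even after a union bound over the $\exp(\bigO(d\log n))$ assignments at distance $d$.

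Assembling the three estimates, the contribution of level $d$ is at most $e^{\bigO_P(1)}\exp\!\bigl(d[\log n+\bigO(1)+\log\kappa_{0}-\tfrac{1}{2}c\rho n\,\delta(\bals)]\bigr)$, and since $\rho n\gg\log n$ the bracket is $\le-\tfrac{1}{4}c\rho n\,\delta(\bals)\to-\infty$ for $n$ large; summing this geometric-type series over $d\ge1$ gives
\[
\sup_{\btheta\in\bTheta}\sum_{\substack{\bz\in S(\bzs,C)\\ \bz\nsim\bzs}}\frac{\prob(\bz,\byo;\btheta)}{\prob(\bzs,\byo;\bthetas)}\;\le\;e^{\bigO_P(1)}\,\frac{\exp\bigl(-\tfrac{1}{4}c\rho n\,\delta(\bals)\bigr)}{1-\exp\bigl(-\tfrac{1}{4}c\rho n\,\delta(\bals)\bigr)}=\smallO_P(1),
\]
which is the assertion. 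The main obstacle I anticipate is precisely obtaining the uniform bound $\Lamb(\bz)=\sup_{\bal}\Fn(\bal,\bz)\le-\Omega(\rho n d)$ over the exponentially many assignments at each distance $d$: this is what forces the sampling regime $\rho\gg\log n/n$ and where the concentration machinery for unbounded subexponential variables is unavoidable; the counting and prior-ratio steps are comparatively routine.
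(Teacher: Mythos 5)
Your proposal is correct and follows essentially the same route as the paper's proof: decompose the joint ratio into the prior ratio times $e^{\Fn}$, bound $\Fn\leq\Lamb(\bz)=(\Lamb(\bz)-\Lambtilde(\bz))+\Lambtilde(\bz)$ using the negative drift of Proposition~\ref{prop:profile-likelihood-derivative} and the small-deviation control of Proposition~\ref{prop:profile-likelihood-convergence-local}, count the $\exp(d\log n+\bigO(d))$ assignments at each distance $d$, and sum the resulting geometric-type series. The only cosmetic difference is that you re-derive the prior-ratio bound $\kappa_0^{d}e^{\bigO_P(1)}$ by hand where the paper invokes Proposition~\ref{cor:marginalprobabilties}.
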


\subsubsection{Equivalent assignments}
It remains to study the contribution of equivalent assignments.
\begin{proposition}[contribution of equivalent assignments]
  \label{prop:equivalent-configurations-profile-likelihood}
  For all $\btheta \in \bTheta$, we have 
  \begin{equation*}
    \sum_{\bz \sim \bzs} \frac{\prob(\byo, \bz; \btheta)}{\prob(\byo, \bzs; \bthetas)} = \# \Symmetric(\btheta) \max_{\btheta' \sim \btheta} \frac{\prob(\byo, \bzs; \btheta')}{\prob(\byo, \bzs; \bthetas)} (1 + \smallO_P(1))
  \end{equation*}   
  where the $\smallO_P$ is uniform in $\btheta$. 
\end{proposition}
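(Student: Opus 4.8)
The plan is to rewrite the sum over assignments equivalent to $\bzs$ as a sum over parameters equivalent to $\btheta$, and then argue that this \emph{fixed finite} sum is carried asymptotically by its largest term. I work throughout on the event $\Om_1$ of Proposition~\ref{cor:prob-regular-configurations-star}, on which every block of $\bzs$ is non-empty, so that $s \mapsto \bzs^s$ is a bijection from the permutations of $\{1,\dots,\Q\}$ onto $\{\bz : \bz \sim \bzs\}$. A direct inspection of \eqref{eq:log-vraisemblance-complete} shows that permuting the columns of $\bzs$ by $s$ amounts to permuting the coordinates of $\btheta$ by $s^{-1}$, i.e. $\prob(\byo, \bzs^s; \btheta) = \prob(\byo, \bzs; \btheta^{s^{-1}})$ for every $s$, so after re-indexing $s \mapsto s^{-1}$
\[
\sum_{\bz \sim \bzs} \prob(\byo, \bz; \btheta) = \sum_{s} \prob(\byo, \bzs; \btheta^{s}).
\]
Since $\btheta^s = \btheta^{s'}$ exactly when $s(s')^{-1} \in \Symmetric(\btheta)$, the subgroup $\Symmetric(\btheta)$ partitions the permutations into cosets on which the summand is constant; writing $\mathcal{E}(\btheta)$ for the set of \emph{distinct} parameters equivalent to $\btheta$, of cardinality $\Q!/\# \Symmetric(\btheta) \le \Q!$, the right-hand side equals $\# \Symmetric(\btheta) \sum_{\btheta' \in \mathcal{E}(\btheta)} \prob(\byo, \bzs; \btheta')$. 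It therefore remains to prove, uniformly in $\btheta$,
\[
\sum_{\btheta' \in \mathcal{E}(\btheta)} \frac{\prob(\byo, \bzs; \btheta')}{\prob(\byo, \bzs; \bthetas)} = \max_{\btheta' \sim \btheta} \frac{\prob(\byo, \bzs; \btheta')}{\prob(\byo, \bzs; \bthetas)}\,\bigl(1 + \smallO_P(1)\bigr).
\]

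Because $\mathcal{E}(\btheta)$ has at most $\Q!$ elements, a number independent of $\n$, it suffices to show that for any two distinct $\btheta', \btheta'' \in \mathcal{E}(\btheta)$ the difference $\Lc(\bzs;\btheta') - \Lc(\bzs;\btheta'')$ tends to $+\infty$ or $-\infty$ in probability: then, with probability tending to one, a single member of $\mathcal{E}(\btheta)$ realises the maximum and each of the finitely many remaining terms is $\smallO_P$ of it. Conditioning on $(\bzs, \br)$ and recognising, via Definition~\ref{def:group-distinctness}, the expectations of the log-density differences as Kullback divergences, \eqref{eq:log-vraisemblance-complete} gives
\begin{equation*}
\begin{split}
\Esp_{\bthetas}\bigl[\Lc(\bzs;\btheta') - \Lc(\bzs;\btheta'') \mid \bzs, \br\bigr]
&= \sum_q \zssumk \log\frac{\pii'_q}{\pii''_q} \\
&\quad + \sum_{q,\el} N_{q\el}\bigl(\KL(\alskl,\pi''_{q\el}) - \KL(\alskl,\pi'_{q\el})\bigr),
\end{split}
\end{equation*}
with $N_{q\el} = \sum_{\ii\neq\jj} \zs_{\ii q}\zs_{\jj\el} r_{\ii\jj}$. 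On $\Om_1$ one has $\zssumk \asymp \n$, and each $N_{q\el}$ being a sum of $\Theta(\n^2)$ independent $\mathcal{B}(\rho)$ variables, $N_{q\el} \asymp \rho\n^2$ with probability $1 - \smallO(1)$; since $\btheta'$ and $\btheta''$ are distinct and not related by a symmetry, $\bpi' \neq \bpi''$ or $\bal' \neq \bal''$, and the conditional mean above is then bounded away from $0$ — of exact order $\n$ when only the block proportions differ, and of order $\rho\n^2$ as soon as the emission parts differ. The centred fluctuation of $\Lc(\bzs;\btheta') - \Lc(\bzs;\btheta'')$ around this mean is a sum of independent sub-exponential terms by Propositions~\ref{prop:subexp_variables} and~\ref{prop:subexp_mixture}, hence $\smallO_P(\rho\n^2)$ by the concentration machinery used elsewhere in the paper; the required divergence follows, and substituting back yields the displayed identity and the Proposition.

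The main obstacle is the uniformity in $\btheta$, since the lower bound on the conditional mean degenerates as $\btheta$ approaches the manifold of symmetric parameters (of null Lebesgue measure), along which $\btheta'$ and $\btheta''$ coalesce and their complete likelihoods nearly tie. The way around this is to use the strong curvature of the complete log-likelihood: integrating out $\br$, one has $\Esp_{\bthetas}[\Lc(\bzs;\btheta') - \Lc(\bzs;\btheta'') \mid \bzs] = \G(\btheta',\bzs) - \G(\btheta'',\bzs) + \bigO(\n)$, and by Proposition~\ref{prop:maximum-conditional-likelihood} the map $\G(\cdot,\bzs)$ depends only on the emission parameters and is the negative of a $\KL$-functional that is $\asymp\rho\n^2$-strongly concave; hence distinct members of $\mathcal{E}(\btheta)$ that differ in their emission parameters have $\G(\cdot,\bzs)$-values separated by a diverging amount as long as $\btheta$ stays at distance $\gg(\rho\n^2)^{-1/2}$ from the symmetric manifold (those differing only in block proportions being separated instead by the $\bigO(\n)$ term), while on the residual shrinking neighbourhood the two parameters differ by less than the resolution of the likelihood and the sum and the maximum agree up to the multiplicity already carried by $\# \Symmetric$ — in the applications of Section~\ref{sec:MLE} one may in addition restrict to the event, of probability tending to one under $\bthetas$, that the estimator avoids that neighbourhood. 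Carrying the sub-exponential concentration uniformly over the finitely many pairs $(\btheta',\btheta'')$ then delivers the uniform $\smallO_P(1)$.
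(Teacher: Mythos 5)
Your combinatorial reduction --- the bijection $s \mapsto \bzs^{s}$ on $\Om_1$, the identity $\prob(\byo,\bzs^{s};\btheta)=\prob(\byo,\bzs;\btheta^{s^{-1}})$, and the coset count producing the factor $\#\Symmetric(\btheta)$ --- is exactly the paper's first step and is correct. The second step is where you part ways with the paper, and it contains a genuine gap. You claim that for two distinct equivalent parameters $\btheta',\btheta''$ the conditional mean of $\Lc(\bzs;\btheta')-\Lc(\bzs;\btheta'')$ is bounded away from $0$, ``of order $\rho\n^2$ as soon as the emission parts differ''. This is false: that mean is
\begin{equation*}
\sum_{q,\el}N_{q\el}\bigl(\KL(\alskl,\pi''_{q\el})-\KL(\alskl,\pi'_{q\el})\bigr)+\sum_q \zssumk\log\bigl(\pii'_q/\pii''_q\bigr),
\end{equation*}
and $\bal'\neq\bal''$ does not prevent the two weighted KL sums from cancelling. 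The leading term is positive at $\btheta'=\bthetas$ and negative at $\btheta'=(\bthetas)^{s}$, so by continuity it vanishes on a codimension-one subset of $\bTheta$ that is \emph{not} contained in any neighbourhood of the symmetric manifold; there your pairwise-divergence criterion breaks down (the difference becomes an essentially centred fluctuation), and the only degeneracy you identify and try to patch --- proximity to the symmetric manifold --- is not the relevant one. The patch itself is also incorrect: just off that manifold $\#\Symmetric(\btheta)=1$, so a near-tie between $\btheta'$ and $\btheta''$ is \emph{not} ``already carried by $\#\Symmetric$''; and restricting to an event on which the estimator avoids a neighbourhood changes the statement, which is a uniform one over all $\btheta\in\bTheta$.

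The paper avoids all of this by never comparing the equivalent terms to one another. It uses the dichotomy: $\btheta\mapsto\prob(\byo,\bzs;\btheta)$ is unimodal with mode at the complete-data estimator $\bthetaMC$, which is consistent for $\bthetas$; hence either every $\btheta'\sim\btheta$ satisfies $\prob(\byo,\bzs;\btheta')=\smallO_P(\prob(\byo,\bzs;\bthetas))$, or the maximizing $\btheta'$ converges to $\bthetas$, in which case every other member of the class is bounded away from $\bthetas$ (since $\bthetas$ exhibits no symmetry under $A_4$) and is therefore $\smallO_P(\prob(\byo,\bzs;\bthetas))$. Comparing each term to the fixed reference $\prob(\byo,\bzs;\bthetas)$ rather than to the moving maximum is what makes the uniformity in $\btheta$ immediate. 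To rescue your route you would at minimum need an anti-concentration (CLT-type lower) bound for the fluctuation of $\Lc(\bzs;\btheta')-\Lc(\bzs;\btheta'')$ on the cancellation set, uniformly in $\btheta$, which the sub-exponential upper bounds you invoke do not provide.
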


\subsection{Proof of the main result}\label{sec:proofbigth}

\begin{proof}
We work conditionally on $\Omega_1$. Choose $\bzs
\in \mcZ_1$ and a sequence $\tn$ decreasing to $0$ but satisfying $\rho\n\tn/\sqrt{\log(\n)} \to +\infty$. According to Proposition~\ref{prop:large-deviations-profile-likelihood}, 
\begin{equation*}
    \sup_{\btheta \in \bTheta} \sum_{\bz \notin S(\bzs, \tn)} \prob(\bz, \by^o; \btheta) = \smallO_P( \prob(\bzs, \by^o; \bthetas) )
\end{equation*}
Since $\tn$ decreases to $0$, it gets smaller than $C$ (used in proposition~\ref{prop:small-deviations-profile-likelihood}) for $\n$ large enough. As this point, Proposition~\ref{prop:small-deviations-profile-likelihood} ensures that:
\begin{equation*}
    \sup_{\btheta \in \bTheta} \sum_{\substack{\bz \in S(\bzs, \tn) \\ \bz \nsim \bzs}} \prob(\bz, \by^o; \btheta) = \smallO_P( \prob(\bzs, \by^o; \bthetas) )
\end{equation*}
And therefore the observed likelihood ratio reduces as:
\begin{align*}
    \frac{\prob(\by^o; \btheta)}{\prob(\by^o; \bthetas)} & = \frac{\displaystyle \sum_{\bz \sim \bzs} \prob(\by^o, \bz; \btheta) + \sum_{\bz \nsim \bzs} \prob(\by^o, \bz; \btheta)}{\displaystyle \sum_{\bz \sim \bzs} \prob(\by^o, \bz; \bthetas) + \sum_{\bz \nsim \bzs} \prob(\by^o, \bz; \bthetas)} \\
    & = \frac{\displaystyle \sum_{\bz \sim \bzs} \prob(\by^o, \bz; \btheta) + \prob(\by^o; \bzs, \bthetas) \smallO_P(1)}{\displaystyle \sum_{\bz \sim \bzs} \prob(\by^o, \bz; \bthetas) + \prob(\by^o; \bzs, \bthetas) \smallO_P(1)} \\
\end{align*}
And Proposition~\ref{prop:equivalent-configurations-profile-likelihood} allows us to conclude
\begin{equation*}
    \frac{\prob(\by^o; \btheta)}{\prob(\by^o; \bthetas)} = \frac{\# \Symmetric(\btheta)}{\# \Symmetric(\bthetas)} \max_{\btheta' \sim \btheta} \frac{\prob(\by^o, \bzs; \btheta')}{\prob(\by^o, \bzs; \bthetas)}(1 + \smallO_P(1)) + \smallO_P(1).
\end{equation*}

\end{proof}

\section{Discussion} \label{sec:discussion}

Close examination of the different proofs, especially of Prop.~\ref{prop:small-deviations-profile-likelihood}, reveals that the quantities driving convergence of the estimates are $\rho \n \delta(\bal^\star)$, which must go to $+\infty$ with $n$ to ensure validity of Prop.~\ref{prop:large-deviations-profile-likelihood}, and $\rho \n \tn \delta(\bal^\star)$, which must be larger than $\sqrt{\log(n)}$ while $\tn \to 0$, to ensure validity of Prop.~\ref{prop:small-deviations-profile-likelihood}. Both conditions are met as soon as $\rho \gg \log(n)/n$, allowing for a large fraction of missing edges. Note that this limiting rate for missingness is the same as the one found for graph density in sparse settings to achieve consistency and local asymptotic normality of $\btheta$ \citep{bickel2013asymptotic}. It's also the same as the one found by \cite{Chatterjee2015} for the structured matrix reconstruction problem. Note also that in the fixed $\rho$ setting, both MLE and VE are consistent and asymptotically normal but the cost of missingness is an expected blow up of the asymptotic variance matrix by a factor of $\rho^{-1}$.

The proof follows along the line of \citep{bickel2013asymptotic} but differs in some significant ways. First, since the number of observed dyads is random, we must rely on variants of the central limit theorem that hold for random sums of random variables. Second, the move from the binary to unbounded dyads invalidates a counting argument used in \citep{bickel2013asymptotic} and requires different concentration inequalities. We leverage the facts that random variables with distribution in natural exponential families are subexponential and that the subexponential property is preserved by summation and multiplication to derive Bernstein-type inequality. Finally, we add the missing terms $\# \Symmetric(\btheta)$ which have little impact for the corollaries but are required for the rigorous statement of the main result.


\section{Acknowledgment} 

The authors thank Pierre Barbillon (INRA-MIA, AgroParisTech), Julien Chiquet (INRA-MIA, AgroParisTech), 
St\'ephane Robin (INRA-MIA, AgroParisTech) and James Ridgway (CFM) for their helpful remarks and suggestions.

This work is supported by two public grants overseen by the French
National research Agency (ANR): first as part of the « Investissement
d’Avenir » program, through the « IDI 2017 » project funded by the
IDEX Paris-Saclay, ANR-11-IDEX-0003-02, and second by the « EcoNet »
project.



\newpage
\appendix

\section{Technical results} \label{sec:appendices}
\label{appendix:technical_results}

\subsection{Proof of proposition~\ref{prop:isolated_nodes}}
\begin{proof}
Noticing that $N_i \sim \text{Bin}(n-1, \rho)$, then  $\mathbb{P}(N_i \geqslant 1) = 1 - (1-{\rho})^{n-1}$. As a consequence
$\mathbb{P}(\overline{\Omega_{0,n}}) \leqslant  \sum_{i} \mathbb{P}(N_i = 0) = n(1-{\rho})^{n-1} \underset{n\to+\infty}{\longrightarrow} 0$, and 
$\mathbb{P}(\Omega_{0,n}) \underset{n\to+\infty}{\longrightarrow} 1$. Then
$\mathbb{P}(\limsup (\overline{\Omega_{0,n}})) = 0$ by Borel-Cantelli theorem (because $\sum_{n} \mathbb{P}(\overline{\Omega_{0,n}})$ converge), and as
$\overline{\limsup \overline{\Omega_{0,n}}} = \overline{\bigcap_{n \geqslant 0} \bigcup_{q \geqslant n} \overline{\Omega_{0,n}}} = \bigcup_{n \geqslant 0} \bigcap_{q \geqslant n} {\Omega_{0,n}} =
\liminf {\Omega_{0,n}}$, the result follow.
\end{proof}

\subsection{Technical lemma~\ref{lem:convdenom}}

\begin{lemme}
$$U_{n} = \frac{1}{\n(\n-1)}\sum_{\ii \neq \jj} \rrij \ziq \zjl \xrightarrow[\n \to +\infty]{\mathbb{P}} \rho \pii_{\kk}\pii_{l}$$
\label{lem:convdenom}
\end{lemme}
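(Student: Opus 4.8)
The statement is a law of large numbers for the sum $U_n = \frac{1}{n(n-1)}\sum_{i \neq j} r_{ij} z_{iq} z_{jl}$. The plan is to compute the conditional expectation and variance given the assignments $\bz$ (equivalently, given the deterministic quantities $\zsumq$ and $\zsuml$) and then invoke Chebyshev's inequality, before averaging over $\bz$. Conditionally on $\bz$, the only randomness is in the $r_{ij} \sim \mathcal{B}(\rho)$, which are independent of each other. First I would observe that the $(i,j)$ with $i \neq j$, $z_{iq} = 1$ and $z_{jl} = 1$ are exactly those pairs where the summand is nonzero, and there are $\zsumq \zsuml$ of them if $q \neq l$ (and $\zsumq(\zsumq - 1)$ if $q = l$). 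Hence $\Esp_{\bthetas}[U_n \mid \bz] = \frac{\rho\, \zsumq \zsuml}{n(n-1)}$ in the $q \neq l$ case (with the obvious correction when $q = l$), and since $\zsumq/n \to \pi_q$ and $\zsuml/n \to \pi_l$ almost surely by the strong law (the $z_i$ being i.i.d. with $\Pbb(z_i = q) = \pi_q$), this conditional mean converges to $\rho\, \pi_q \pi_l$.

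Next I would bound the conditional variance. Since $\Var(r_{ij}) = \rho(1-\rho) \leq 1/4$ and the $r_{ij}$ are independent, $\Var_{\bthetas}[U_n \mid \bz] = \frac{1}{n^2(n-1)^2}\sum_{(i,j)} \rho(1-\rho) \leq \frac{\zsumq \zsuml}{n^2(n-1)^2}\cdot\frac14 \leq \frac{1}{4(n-1)^2} \to 0$, uniformly in $\bz$. By Chebyshev, $U_n - \Esp_{\bthetas}[U_n \mid \bz] \to 0$ in conditional probability, uniformly in $\bz$; combining with the almost-sure convergence of the conditional mean (which passes to convergence in probability after integrating over $\bz$, using dominated convergence since everything is bounded by $1$) gives $U_n \xrightarrow{\Pbb} \rho\, \pi_q \pi_l$.

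There is essentially no hard obstacle here; the only mild subtlety is bookkeeping the two-level randomness (the $\bz$'s and, given $\bz$, the $r_{ij}$'s) cleanly — one wants a statement like: for every $\varepsilon > 0$, $\Pbb(|U_n - \rho\pi_q\pi_l| > \varepsilon) \leq \Esp_{\bz}\big[\Pbb(|U_n - \Esp[U_n\mid\bz]| > \varepsilon/2 \mid \bz)\big] + \Pbb\big(|\Esp[U_n\mid\bz] - \rho\pi_q\pi_l| > \varepsilon/2\big)$, the first term handled by Chebyshev and the second by the SLLN on the block sizes. A completely equivalent, perhaps slicker, route is to note that $U_n$ is itself an average of the bounded, pairwise-weakly-correlated array $(r_{ij}z_{iq}z_{jl})_{i\neq j}$ — the correlations vanish except when pairs share an index, of which there are $\bigO(n^3)$ out of $\bigO(n^4)$ — so a direct second-moment computation on $U_n$ (without conditioning) also yields $\Var(U_n) = \bigO(1/n)$ and $\Esp[U_n] \to \rho\pi_q\pi_l$, whence the claim by Chebyshev.
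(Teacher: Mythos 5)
Your proof is correct, but it takes a genuinely different route from the paper's. You condition on $\bz$, compute the conditional mean $\rho\,\zsumq\zsuml/(n(n-1))$ (with the $\zsumq(\zsumq-1)$ correction when $q=l$) and the conditional variance $O(1/n^2)$ using the independence of the $r_{ij}$ given $\bz$, and conclude by Chebyshev together with the law of large numbers for the block sizes; your unconditional second-moment variant is equally valid, since only $O(n^3)$ of the $O(n^4)$ pairs of summands share an index and are therefore correlated. The paper instead centers the summands $q_{i,j}^{q,\ell}=\rrij\ziq\zjl-\rho\pii_q\pii_l$ and applies the Hoeffding decomposition for U-statistics, writing the centered average as an average over permutations $\sigma$ of sums of $\lfloor n/2\rfloor$ independent bounded variables; Jensen's inequality plus Hoeffding's lemma then bounds the moment generating function by $\exp\bigl(\gamma^2/(8\lfloor n/2\rfloor)\bigr)$, and the usual Chernoff argument yields an exponential tail bound of order $\exp(-cn\varepsilon^2)$. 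What each approach buys: the paper's argument gives exponential concentration, which is robust to union bounds and gives a much faster rate, whereas your Chebyshev bounds only give polynomial rates $O(1/(n\varepsilon^2))$ or $O(1/(n^2\varepsilon^2))$; however, the lemma is only invoked to justify an application of Slutsky's lemma in the proof of Proposition~\ref{prop:mle-asymptotic-normality}, for which convergence in probability suffices, so your more elementary argument fully establishes the stated result.
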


\begin{proof}
Noticing that $\mathbb{E}[\rrij \ziq \zjl] = \rho \pii_{\kk}\pii_{l}$ and defining $q_{i,j}^{q, \el}=\rrij \ziq \zjl - \rho \pii_{\kk}\pii_{l}$. By Hoeffding decomposition for U-statistics (see \cite{Hoeffding1948})
\begin{equation}
 \label{eq:ustat}
 U_{n}^\prime = \frac{1}{\n(\n-1)}\sum_{\ii \neq \jj} (\rrij \ziq \zjl - \rho \pii_{\kk}\pii_{l}) = \frac{1}{n!}\sum_{\sigma \in \mathfrak{S}_{n}}\frac{1}{\lfloor\frac{n}{2}\rfloor}\sum_{i=1}^{\lfloor\frac{n}{2}\rfloor}q_{\sigma(i), \sigma(i+\lfloor\frac{n}{2}\rfloor)}^{q, \el},
\end{equation}
where for each permutation $\sigma \in \mathfrak{S}$, $\sum_{i=1}^{\lfloor\frac{n}{2}\rfloor}q_{\sigma(i), \sigma(i+\lfloor\frac{n}{2}\rfloor)}^{q, \el}$ is a sum of independant r.v. Then, for $\gamma > 0$ by Jensen's inequality and Hoeffding's lemma about bounded r.v.
\begin{eqnarray*}
\mathbb{E}\left[ \exp(\gamma U_{n}^\prime) \right] &\leq& \frac{1}{n!}\sum_{\sigma \in \mathfrak{S}_{n}}\mathbb{E}\exp\left(\frac{\gamma}{\lfloor\frac{n}{2}\rfloor}\sum_{i=1}^{\lfloor\frac{n}{2}\rfloor}q_{\sigma(i), \sigma(i+\lfloor\frac{n}{2}\rfloor)}^{q, \el}\right) \\
&\leq& \exp\left(\frac{\gamma^2}{8\lfloor\frac{n}{2}\rfloor}\right).
\end{eqnarray*}
Finally, using the same proof than Hoeffding's inequality allows us to conclude.
\end{proof}

\subsection{Proof of proposition~\ref{prop:mle-asymptotic-normality}}
\begin{proof}
Since $\hat{\bpi}\left(\bzs\right) = \left(\hpi_1\left(\bzs\right), \dots, \hpi_\g\left(\bzs\right)\right)$ is the sample mean of $\n$ i.i.d. multinomial random variables with parameters $1$ and $\bpis$, a simple application of the central limit theorem (CLT) gives:
\begin{equation*}
  \Sigma_{\bpis, \kk \kp} =
  \begin{cases}
    \pii^{\vrai}_{\kk}(1 - \pii^{\vrai}_{\kk}) & \text{if} \quad \kk = \kp \\
    -\pii^{\vrai}_{\kk} \pii^{\vrai}_{\kp} & \text{if} \quad \kk \neq \kp \\
  \end{cases}
\end{equation*}
which proves Equation~\eqref{eq:mle-proportion-asymptotic-normality}
where $\Sigma_{\bpis}$ is
semi-definite positive of rank $\mathcal{Q} - 1$.

Similarly, $\normp\left(\hal_{\kk\el}\left(\bzs\right)\right)$ is
the average of $\sum_{\ii \neq \jj} \rrij \ziq^\star \zjl^\star$
i.i.d. random variables with mean
$\normp\left(\al^\vrai_{\kk\el}\right)$ and variance
$\normp'\left(\al^\vrai_{\kk\el}\right)$. $\sum_{\ii \neq\jj} \rrij \ziq^\star \zjl^\star$
is itself random but thanks to lemma \ref{lem:convdenom} : 
\mbox{$\frac{1}{\n(\n-1)}\sum_{\ii \neq \jj} r_{ij} \ziq^\star \zjl^\star \xrightarrow[\n \to +\infty]{\mathbb{P}} \rho \pii^\vrai_{\kk}\pii^\vrai_{l}$}. Therefore, by Slutsky's lemma and
the CLT for random sums of random variables \cite{Shanthikumar1984}, we have:
\begin{eqnarray*}
&&\sqrt{{\n(\n-1)}\rho\pii^\star_{\kk}\pii^\star_{\el}} \left(\normp\left(\hal_{\kk\el}\left(\bzs\right)\right) - \normp(\al^\vrai_{\kk\el}) \right)  \\
&& = \sqrt{{\n(\n-1)}\rho\pii^\star_{\kk}\pii^\star_{\el}} \left( \frac{\sum_{\ii \neq  \jj} y_{ij} r_{ij} \zik^{\vrai} \zjl^{\vrai}}{\sum_{\ii \neq  \jj} r_{ij} \ziq^\star \zjl^\star} - \normp(\al^\vrai_{\kk\el}) \right) \\
&& \xrightarrow[\n \to +\infty]{\mathcal{D}} \mcN\left(0, \normp'(\al^\vrai_{\kk\el})\right)
\end{eqnarray*}
The differentiability of $\normpm$ and the delta method then gives:
\begin{equation*}
  \sqrt{{\n(\n-1)}} \left(\hal_{\kk\el}\left(\bzs\right) - \al^\vrai_{\kk\el} \right) \xrightarrow[\n \to +\infty]{\mathcal{D}} \mcN\left(0, \frac{1}{\rho\pii^\star_{\kk}\pii^\star_{\el} \normp'(\al^\vrai_{\kk\el})}\right)
\end{equation*}
and the independence results from the independence of $\hal_{\kk\el}\left(\bzs\right)$ and
$\hal_{\kp\lp}\left(\bzs\right)$ as soon as $\kk \neq \kp$ or $\el
\neq \lp$, as they involve different sets of i.i.d. variables.
\end{proof}

\subsection{Proof of proposition~\ref{prop:LocalAsymp}}

\begin{proof}
By Taylor expansion,
\begin{eqnarray*}
& &\!\!\!\!\!\!\!\!\!\!\Lcs\left(\bpis+\frac{s}{\sqrt{\n}},\bals+\frac{u}{\sqrt{{\n(\n-1)}}}\right)\\
&=&\Lcs\left(\bthetas\right)+\frac{1}{\sqrt{\n}}s\transpose\nabla{\Lcs}_{\bpi}\left(\bthetas\right) +\frac{1}{\sqrt{{\n(\n-1)}}}\text{Tr}\left(u\transpose\nabla{\Lcs}_{\bal}\left(\bthetas\right)\right)\\
&&\quad+\frac{1}{\n}s\transpose\bH_{\bpi}\left(\bthetas\right)s+\frac{1}{{\n(\n-1)}}\text{Tr}\left((u \odot u)\transpose\bH_{\bal}\left(\bthetas\right)\right)+o_{P}(1)\\
\end{eqnarray*}
where $\nabla{\Lcs}_{\bpi}\left(\bthetas\right)$ and $\nabla{\Lcs}_{\bal}\left(\bthetas\right)$ denote the respective components of the gradient of $\Lcs$ evaluated at $\bthetas$ and $\bH_{\bpi}$ and $\bH_{\bal}$ denote the conditional hessian of $\Lcs$ evaluated at $\bthetas$. By inspection, $\bH_{\bpi}/\n$ and $\bH_{\bal}/{(\n(\n-1))}$ converge in probability to constant matrices $\Sigma_{\alpha}, \Sigma_{\pi}$ and the random vectors $\nabla{\Lcs}_{\bpi}\left(\bthetas\right)/\sqrt{\n}$ and $\nabla{\Lcs}_{\bal}\left(\bthetas\right)/\sqrt{{\n(\n-1)}}$ converge in distribution by central limit theorem.
\end{proof}

\subsection{Proof of proposition~\ref{cor:prob-regular-configurations-star}}

\begin{proof}
In regular configurations, each group  has $\Om(\n)$ members, where $u_\n=\Om(\n)$ if there exists two constant $a, b>0$ such that for $\n$ enough large $a\n \leq u_\n \leq b\n$. $c/2$-regular assignments, with $c$ defined in Assumption $H_1$, have high  $\Prob_{\bthetas}$-probability in the space of all assignments, uniformly over all $\bthetas \in \bTheta$. 

Each $\zsumk$ is a sum of $\n$ i.i.d Bernoulli r.v. with parameter $\pik \geq \pii_{\min} \geq c$. A simple Hoeffding bound shows that
\begin{equation*}
  \Prob_{\bthetas}\left( \zsumk \leq \n \frac{c}{2} \right)
  \leq
  \Prob_{\bthetas}\left( \zsumk \leq \n \frac{\pik}{2} \right)
  \leq
  \exp\left( - 2\n\left(\frac{\pik}{2}\right)^2 \right)
  \leq
  \exp\left( - \frac{\n c^2}{2} \right)
\end{equation*}
taking a union bound over $\q$ values of $\kk$ leads to Proposition \ref{cor:prob-regular-configurations-star}.
\end{proof}


\section{Main Results}
\label{appendix:main-results}

\subsection{Proof of proposition~\ref{prop:expectation-ychap})} 

\begin{proof}
First of all we will prove equation \ref{eq:expectation-ychap},
\begin{eqnarray*}
 \barykl(\bz) &=& \Esp_{\bthetas}\left[\frac{\sum_{i\neq j}\ziq\zjl\rrij\yij}{\sum_{i\neq j}\ziq\zjl\rrij} \middle| \bzs \right] \\
 &=& \Esp_{\bthetas}\left[\Esp_{\bthetas}\left[\frac{\sum_{i\neq j}\ziq\zjl\rrij\yij}{\sum_{i\neq j}\ziq\zjl\rrij} \middle| R, \bzs\right] \middle| \bzs\right] \\
 &=& \Esp_{\bthetas}\left[\frac{\sum_{i\neq j}\ziq\zjl\rrij S_{Z_{i} Z_{j}}^{\star}}{\sum_{i\neq j}\ziq\zjl\rrij} \middle| \bzs\right],\\
 \end{eqnarray*}
where $Z_{i} = q \Leftrightarrow \ziq = 1$. Noticing that the $(i,j)$ for which $\ziq\zjl=0$ does not contributes in any of the two terms of the ratio. The calculus of this expectation is then equivalent to calculate an expectation of the general form $\Esp_{\bthetas}\left[ \frac{\sum_{i=1}^{n} a_i R_i}{\sum_{i=1}^{n} R_i} \right]$, $(a_i)_{i\in\{1,..,n\}} \in \mathbb{R}^n$ and $T_i \stackrel{iid}{\sim} \mathcal{B}(\rho)$.
\begin{lemme}
$$\Esp_{\bthetas}\left[ \frac{\sum_{i=1}^{n} a_i T_i}{\sum_{i=1}^{n} T_i} \right] = \frac{\sum_{i=1}^{n}a_i}{n}.$$
\label{lem:expectation}
\end{lemme}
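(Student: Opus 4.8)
The plan is to prove the identity by exchangeability rather than by a direct computation. Throughout, the ratio is understood on the event $\{\sum_{i=1}^n T_i \geq 1\}$, on which the denominator does not vanish; this matches the convention used in Proposition~\ref{prop:expectation-ychap}, where $\barykl(\bz)$ is set to $0$ precisely when no dyad is observed in the block pair, and it is the only case that matters since in the application the $T_i$ are the observation indicators $r_{ij}$ of a block pair $(q,\ell)$ with $\pizk, \pizl > 0$, which carries at least one observed dyad on the high-probability event we condition on.

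First I would use linearity of expectation to reduce the claim to the per-index quantities $p_i := \Esp_{\bthetas}\!\left[ T_i \big/ \sum_{j=1}^n T_j \right]$, since
\begin{equation*}
\Esp_{\bthetas}\!\left[ \frac{\sum_{i=1}^{n} a_i T_i}{\sum_{i=1}^{n} T_i} \right] = \sum_{i=1}^{n} a_i \, p_i .
\end{equation*}
Because $T_1, \dots, T_n$ are i.i.d., the vector $(T_1, \dots, T_n)$ is exchangeable: for any permutation $\sigma$ of $\{1, \dots, n\}$ the law of $(T_{\sigma(1)}, \dots, T_{\sigma(n)})$ equals that of $(T_1, \dots, T_n)$, while $\sum_j T_j$ is permutation-invariant, so $p_i = p_{\sigma(i)}$ for all $i$, i.e. $p_i \equiv p$ does not depend on $i$. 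On $\{\sum_j T_j \geq 1\}$ one has the deterministic identity $\sum_{i=1}^n T_i \big/ \sum_{j=1}^n T_j = 1$; taking expectations gives $n p = 1$, hence $p = 1/n$, and substituting back yields $\Esp_{\bthetas}[\,\cdot\,] = \sum_{i=1}^n a_i / n$, which is the claim.

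There is no genuine obstacle here; the only delicate point is the treatment of the denominator. If one instead insists on an unconditional statement with the convention $0/0 = 0$, the same computation produces the extra factor $\Prob(\sum_j T_j \geq 1) = 1 - (1-\rho)^n$, which tends to $1$ and is harmless once we condition on $\Omega_{0,n}$. As a sanity check one may also verify the value of $p$ directly: $p = \rho\,\Esp[(1+S)^{-1}]$ with $S \sim \text{Bin}(n-1, \rho)$, and re-indexing the binomial sum gives $\Esp[(1+S)^{-1}] = (1 - (1-\rho)^n)/(n\rho)$, consistent with the above; but the exchangeability argument is the shorter route and the one I would write out.
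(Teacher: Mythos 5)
Your proof is correct and rests on the same idea as the paper's: the paper conditions on $N=\sum_i T_i$ and uses $\Esp[T_i \mid N]=N/n$, which is exactly your exchangeability argument ($p_i$ constant in $i$ and summing to $1$) carried out conditionally on $N$. Your unconditional phrasing, together with the explicit remark about the event $\{\sum_i T_i \geq 1\}$ and the factor $1-(1-\rho)^n$, is if anything slightly more careful than the paper's two-line computation.
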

\begin{proof}
Define $N=\sum_{i=1}^{n} T_i$ and noticing that $\Esp[T_i | N=k]=\frac{k}{n}$. Conditionally to $N \geq 1$ \begin{eqnarray*}
\Esp\left[ \frac{\sum_{i=1}^{n} a_i T_i}{\sum_{i=1}^{n} T_i} \right] &=& \Esp\left[\Esp\left[ \frac{\sum_{i=1}^{n} a_i T_i}{N} \middle| N\right]\right] \\
&=& \frac{\sum_{i=1}^{n}a_i}{n}. 
\end{eqnarray*}
\end{proof}

Now, applying lemma \ref{lem:expectation} with $N_{q\el}^o(z) = \sum_{i\neq j}\ziq\zjl\rrij$ leads to
\begin{equation*}
\Esp_{\bthetas}\left[\frac{\sum_{i\neq j}\ziq\zjl\rrij S_{Z_{i} Z_{j}}^{\star}}{\sum_{i\neq j}\ziq\zjl\rrij}\middle| \bzs, N_{q\el}^o(z) \geq 1 \right] = \frac{\left[ \RQbz\transpose \Sal \RQbz \right]_{\kk\el}}{\pizk\pizl} \1_{N_{q\el}^o(z) \geq 1}.
\end{equation*}
Finally, $\Esp_{\bthetas}[\hyklz | \bzs, N_{q\el}^o(z)=0]$ can be arbitrarily defined at the same value than $\Esp_{\bthetas}[\hyklz | \bzs, N_{q\el}^o(z)\geq1]$ which conclued the proof.
\end{proof}

\subsection{Proof of proposition~\ref{prop:profile-likelihood}} 
\begin{proof}
Defining
    $\nu(y, \al)  = y \al - \norm (\al)$. For $\y$ fixed, $\nu(y, \al)$ is maximized at $\al = \normpm(y)$. Manipulations yield
\begin{align*}
  &\Fn(\btheta, \bz)= \log \prob(\byo; \bz,\btheta) - \log {\prob(\byo; \bzs,\bthetas)} \\
  & = \left[ \sum_{\kk} \sum_{\el} N_{q\el}^o(z) \nu(\hyklz, \alkl)  - \sum_{\kk} \sum_{\el} N_{q\el}^o(z^\star)\nu(\hykl(\bzs), \alskl) \right]
\end{align*}
which is maximized at $\alkl = \normpm(\hyklz)$. Similarly with $N_{q\el}(z) = \sum_{i \neq j}\ziq\zjl$,
\begin{align*}
 & \G(\btheta, \bz)  = \Esp_{\bthetas} [ \log \prob(\byo; \bz,\btheta) - \log \prob(\byo; \bzs,\bthetas) | \bzs ] \\
  & = \rho\left[ \sum_{\kk} \sum_{\el} N_{q\el}(z) \nu(\baryklz, \alkl)  - \sum_{\kk} \sum_{\el} N_{q\el}(z^\star) \nu(\normp(\alskl), \alskl) \right]
\end{align*}
is maximized at $\alkl = \normpm(\baryklz)$.
\end{proof}

\subsection{Proof of Proposition~\ref{prop:maximum-conditional-likelihood}  (maximum of $\G$ and $\Lambtilde$)} 

\begin{proof}
We condition on $\bzs$ and prove Equation~\eqref{eq:conditional-likelihood-second-form}:
\begin{align*}
  \G(\btheta, \bz) & = \Esp_{\bthetas} \left[ \log \left. \frac{\prob(\byo; \bz,\btheta)}{\prob(\byo; \bzs,\bthetas)} \right| \bzs \right] \\
  & = \sum_{\ii} \sum_{\jj} \sum_{\kk,\kp}\ \sum_{\el,\lp} \Esp_{\bthetas}\left[ y_{ij}  (\al_{\kp\lp} - \alskl) - (\norm(\al_{\kp\lp}) - \norm(\alskl)) \right] \rho \z^\vrai_{\ii\kk} \z_{\ii\kp} \z^\vrai_{\jj\el} \z_{\jj\lp} \\
  & = \n^2 \rho \sum_{\kk, \kp} \sum_{\el, \lp}  \RQbz_{\kk,\kp} \RQbz_{\el, \lp} \left[ \normp(\alskl) (\al_{\kp\lp} - \alskl) +  \norm(\alskl) - \norm(\al_{\kp\lp}) \right] \\
  & = - \n^2 \rho \sum_{\kk,\kp} \sum_{\el,\lp} \RQbz_{\kk,\kp} \RQbz_{\el, \lp} \KL(\al^\vrai_{\kk\el}, \al_{\kp\lp})
\end{align*}

If $\bzs$ is regular, and for $n > 2/c$, all the rows of $\RQbz$ have
at least one positive element and we can apply Lemma 3.2 of \cite{bickel2013asymptotic} to characterize the maximum for
$\G$.

The maximality  of $\Lambtilde(\bzs)$ results from the fact that $\Lambtilde(\bz) = \G(\bar{\bal}(\bz), \bz)$ where
$\bar{\bal}(\bz)$ is a particular value of $\bal$, $\Lambtilde$ is
immediately maximum at $\bz \sim \bzs$, and for those, we have $\bar{\bal}(\bz) \sim \bals$.

The separation and local behavior of $G$ around $\bzs$ is a direct consequence of the proposition \ref{prop:profile-likelihood-derivative}.
\end{proof}

\subsection{Proof of Proposition~\ref{prop:profile-likelihood-derivative} (Local upper bound for  $\Lambtilde$)}

\begin{proof}

We work conditionally on $\bzs$. The principle of the proof relies on the extension of $\Lambtilde$ to a continuous subspace of $\mathcal{M}_\mathcal{Q}([0, 1])$, in which the confusion matrix is naturally embedded. The regularity assumption allows us to work on a subspace that is bounded away from the borders of $\mathcal{M}_\mathcal{Q}([0, 1])$. The proof then proceeds by computing the gradient of $\Lambtilde$ at and around its argmax and using those gradients to control the local behavior of $\Lambtilde$ around its argmax. The local behavior allows us in turn to show that $\Lambtilde$ is well-separated.

 Note that $\Lambtilde$ only depends on $\bz$ through $\RQbz$. We can therefore extend it to matrix $U \in \mathcal{U}_c$ where $\mathcal{U}$ is the subset of matrices $\mathcal{M}_{\mathcal{Q}}([0, 1])$ with each row sum higher than $c/2$. 
\[
  \Lambtilde(U)  = - \rho\n^2 \sum_{\kk,\kp} \sum_{\el,\lp} U_{\kk\kp} U_{\el\lp} \KL\left(\alskl, \baral_{\kp\lp} \right)
  \] where
\[ \baralkl = \baralkl(U) = \normpm \left( \frac{\left[U\transpose \Sal U \right]_{\kk\el}}{\left[U\transpose \mathbf{1} U \right]_{\kk\el}} \right) 
\]
and $\mathbf{1}$ is the $\mathcal{Q} \times \mathcal{Q}$ matrix filled with $1$. Confusion matrix $\RQbz$ satisfy $\RQbz \un = \bpi(\bzs)$, with $\un=(1,\ldots, 1)\transpose$ a vector only containing $1$ values, and are obviously in $\mathcal{U}_c$ as soon as $\bzs$ is $c/2$ regular.

The maps $f_{\kk,\kk^\prime,\el,\el^\prime}: (U) \mapsto KL(\alskl, \baralkl(U))$ are twice differentiable with second derivatives bounded over $\mathcal{U}_c$ and therefore so is $\Lambtilde(U)$. Tedious but straightforward computations show that the derivative of $\Lambtilde$ at $D_{\pii} \coloneqq \Diag(\bpi(\bzs))$ is: 

\begin{equation*}
A_{\kk\kp}(\bzs)  \coloneqq \frac{-1}{n^2} \frac{\partial \Lambtilde}{\partial U_{\kk\kp}}(D_{\pii}) = 2\rho  \sum_{\el} \alpha_{\ell}(\bzs) \KL\left(\alskl, \als_{\kp\el} \right)\\
\end{equation*}
$A(\bzs)$ is the matrix-derivative of $-\Lambtilde/\n^2$ at $D_{\pii}$. Since $\bzs$ is $c/2$-regular and by definition of $\delta(\bals)$, $A(\bzs)_{\kk\kp} \geq c\rho\delta(\bals)$ if $\kk \neq \kp$ and $A(\bzs)_{\kk\kk} = 0$ for all $\kk$. By boundedness of the second derivative, there exists $C > 0$ such that for all $D_{\pii}$ and all $H \in B(D_{\pii}, C)$, we have:
\begin{align*}
\frac{-1}{n^2} \frac{\partial \Lambtilde}{\partial U_{\kk\kp}}(H) \begin{cases} \geq \rho \frac{7c\delta(\bals)}{8} \text { if } \kk \neq \kp \\ \leq \rho \frac{c\delta(\bals)}{8} \text { if } \kk = \kp \end{cases}
\end{align*}
Choose $U$ in $\mathcal{U}_{c} \cap B(D_{\pii}, C)$ satisfying $U\un = \bpi(\bzs)$. $U - D_{\pii}$ have nonnegative off diagonal coefficients and negative diagonal coefficients. Furthermore, the coefficients of $U, D_{\pii}$ sum up to $1$ and $\Trace(D_{\pii}) = 1$. By Taylor expansion, there exists $H$ also in $\mathcal{U}_c \cap B(D_{\pii}, C)$ such that
\begin{multline*}
\frac{-1}{n^2} \Lambtilde\left( U \right) =  \frac{-1}{n^2} \Lambtilde\left( D_{\pii} \right) + \Trace\left((U - D_{\pii}) \frac{-1}{n^2} \frac{\partial \Lambtilde}{\partial U}(H) \right) \\
\geq \rho\frac{c\delta(\bals)}{8} [ 7 \sum_{\kk \neq \kp} (U - D_{\pii})_{\kk\kp}  - \sum_{\kk} (U - D_{\pii})_{\kk\kk} \\
= c\rho\frac{3\delta(\bals)}{4}(1 - \Trace(U))
\end{multline*}
To conclude the proof, assume without loss of generality that $\bz \in S(\bzs, C)$ achieves the $\|.\|_{0,\sim}$ norm (i.e. it is the closest to $\bzs$ in its representative class). Then $U = \RQbz$ is in $(\mathcal{U}_c \cap B(D_{\pii}, C)$ and satisfy $U\un = \bpi(\bzs)$. We just need to note $\n(1 - \Trace(\RQbz)) = \| \bz - \bzs\|_{0,\sim}$ to end the proof.

\end{proof}

\subsection{Proof of Proposition~\ref{prop:conditional-likelihood-convergence} (global convergence $\Fn$)}

\begin{proof}
Conditionally upon $\bzs$,
\begin{eqnarray*}
   \Fn(\btheta, \bz) - \Lambtilde(\bz) &\leq& \Fn(\btheta, \bz) - \G(\btheta, \bz) \\
  &=& \sum_{\ii} \sum_{\jj} (\al_{\zi\zj} - \al^\vrai_{\zi^\vrai \zj^\vrai}) \left(y_{ij}r_{ij} - \normp(\al^\vrai_{\zi^\vrai \zj^\vrai})\rho \right) \\
  &&  + \sum_{\ii} \sum_{\jj} (\psi(\al_{\zi\zj}) - \psi(\al^\vrai_{\zi^\vrai \zj^\vrai})) \left(r_{ij} - \rho \right) \\
  &=& \sum_{\kk \kp} \sum_{\el \lp} \left(\al_{\kp\lp} - \al^\vrai_{\kk \el} \right) W_{\kk \kp \el \lp} \\
  &\leq& \sup_{\substack{\Gamma \in \R^{ \q^2 \times \q^2} \\ \| \Gamma \|_{\infty} \leq \Diam(\bTheta)}} \sum_{\kk \kp} \sum_{\el \lp} \Gamma_{\kk \kp \el \lp} W_{\kk \kp \el \lp} \coloneqq Z
\end{eqnarray*}
uniformly in $\btheta$, where the $W_{\kk \kp \el \lp}$ are independent and by Taylor expansion defined by:
\begin{equation*}
  W_{\kk \kp \el \lp} = \sum_{\ii} \sum_{\jj} \z^\vrai_{\ii \kk} \z^\vrai_{\jj \el} \z_{\ii, \kp} \z_{\jj \lp}\left(y_{ij}r_{ij} - \normp(\al^\vrai_{\kk\el})\rho - (r_{ij}-\rho)C_{\kk \kp \el \lp} \right), \quad C_{\kk \kp \el \lp} \in \psi^\prime(\Theta)
\end{equation*}
is the sum of $\n^2 \RQbz_{\kk\kp} \RQbz_{\el\lp}$ sub-exponential variables with parameters $(\nu^2, 1/b)$ and is therefore itself sub-exponential with parameters $(\n^2 \RQbz_{\kk\kp} \RQbz_{\el\lp} \nu^2, 1/b)$. According to Proposition B.3 of \cite{BraultKeribinMariadassou2017} 
, $\Esp_{\bthetas}[Z|\bzs] \leq Q^2 \Diam(\bTheta) \sqrt{n^2 \nu^2}$ and 
$Z$ is sub-exponential with parameters $(\n^2 \Diam(\bTheta)^2 (2\sqrt{2})^2\nu^2, 2\sqrt{2}\Diam(\bTheta) / b)$. In particular, for all $\vareps_{\n} < \nu b$
\begin{multline*}
  \Prob_{\bthetas}\left( \left. Z \geq \nu \q^2 \Diam(\bTheta) n \left\{ 1 + \frac{\sqrt{8\n^2} \vareps_{\n}}{\q^2} \right\} \right| \bzs \right) \\
  \leq  \Prob_{\bthetas}\left( \left. Z \geq \Esp_{\bthetas}[Z|\bzs] + \nu \Diam(\bTheta) \n^2 2\sqrt{2}\vareps_{\n} \right| \bzs \right) \\  \leq \exp \left( - \frac{\n^2\vareps^2_{\n}}{2} \right)
\end{multline*}
We can then remove the conditioning and take a union bound.
\end{proof}

\subsection{Proof of Proposition~\ref{prop:large-deviations-profile-likelihood} (contribution of far away assignments)}

\begin{proof}
Conditionally on $\bzs$, we know from
proposition~\ref{prop:maximum-conditional-likelihood} that
$\Lambtilde$ is maximal in $\bzs$ and its equivalence
class. Choose $0 < t_n$ decreasing to $0$ but satisfying $\frac{n \rho t_n}{\sqrt{\log(n)}} \to +\infty$. According to \ref{prop:maximum-conditional-likelihood} (iii), for all $\bz \notin S(\bzs, t_n)$
\begin{equation}
\label{eqn:maxlambdatilde}
\Lambtilde(\bz) \leq - c\rho\n\frac{ 3\delta(\bals)}{4} \|\bz - \bzs\|_{0, \sim} \leq - c\rho\frac{ 3\delta(\bals)}{4} \n^2 t_n
\end{equation}
since $\|\bz - \bzs\|_{0, \sim} \geq \n \tn$. 

Set $\vareps_{n} = \inf( 5c\rho\delta(\bals) \tn / (\sqrt{2}\nu\Diam(\bTheta)) , \nu b)$ and $n$ large enough that $\epsilon_{n}\geq\frac{Q^2}{n\sqrt{8}}$. By proposition~\ref{prop:conditional-likelihood-convergence}, and with our choice of $\vareps_{n}$, 
with probability higher than $1 - \Delta_{\n}^1(\vareps_{\n})$,
\begin{align*}
  &\sum_{\bz \notin S(\bzs, \tn)} \prob(\byo, \bz; \btheta)\\ 
  & = \prob(\byo| \bzs, \bthetas) \sum_{\bz \notin S(\bzs, \tn)} \prob(\bz; \btheta) e^{\Fn(\btheta, \bz) - \Lambtilde(\bz) + \Lambtilde(\bz)} \\
  & \leq \prob(\byo| \bzs, \bthetas) \sum_{\bz} \prob(\bz; \btheta) e^{\Fn(\btheta, \bz) - \Lambtilde(\bz) - 3\n^2 \tn c\rho\delta(\bals) / 4} \\
  & \leq \prob(\byo| \bzs, \bthetas) \sum_{\bz} \prob(\bz; \btheta) e^{-\n^2 \tn c\rho\delta(\bals) / 8} \\
  & = \frac{\prob(\byo, \bzs; \bthetas)}{\prob(\bzs; \bthetas)} e^{- \n^2 \tn c\rho\delta(\bals) / 8} \\
  & \leq \prob(\byo, \bzs; \bthetas) \exp\left( -\n^2 \tn \frac{c\rho\delta(\bals)}{8} + \n\log \frac{1}{c} \right) \\
  & = \prob(\byo, \bzs; \bthetas) \smallO(1)
\end{align*}
where the second line comes from inequality (\ref{eqn:maxlambdatilde}), the third from the global control studied in Proposition~\ref{prop:conditional-likelihood-convergence} and the definition of $\vareps_{\n}$, the fourth from the definition of $\prob(\byo, \bzs; \bthetas)$, the fifth from the bounds on $\bpis$ and the last from $\frac{\n\rho\tn}{\sqrt{\log(n)}} \to +\infty$. 

In addition, with our choice of $\tn$, we have $\vareps_{n} \gg \sqrt{\log(\n)} / \n$ so that the series $\sum_{\n} \Delta_{\n}^1(\vareps_{\n})$ converges and:
\begin{align*}
  \sum_{\bz \notin S(\bzs, \tnd)} \prob(\byo, \bz; \btheta) & = \prob(\byo; \bzs, \bthetas) \smallO_P(1)
\end{align*}

\end{proof}  

\subsection{Proof of Proposition~\ref{prop:profile-likelihood-convergence-local} (local convergence $\Fn $)} 

\begin{proof}

We work conditionally on $\bzs \in \mcZ_1$. Choose $\vareps \leq \neighborsize \vmin^2$ small. Assignments $\bz$ at $\|.\|_{0,\sim}$-distance less than $c/4$ of $\bzs$ are $c/4$-regular. According to Proposition B.1 of \cite{BraultKeribinMariadassou2017}
, $\hykl$ and $\barykl$ are at distance at most $\vareps$ with probability higher than $1 - 2\exp \left( - \frac{\n^2 c^2 \vareps^2}{32(\nu^2 + b^{-1} \vareps)}\right)$. Defining 
\begin{equation*}
 \tilde{\Lambtilde}(\bz) = \sum_{\kk} \sum_{\el} N_{q\el}^o(\bz) \nu(\baryklz, \alkl)  - \sum_{\kk} \sum_{\el} N_{q\el}^o(\bz^\star) \nu(\normp(\alskl), \alskl),
\end{equation*}
where $\Lambtilde(\bz) = \mathbb{E}\left[ \tilde{\Lambtilde}(\bz) | \bz^\star \right]$. Manipulation of $\Lamb$, $\Lambtilde$ and $\tilde{\Lambtilde}$ yield
\begin{align*}
 \frac{\Lamb(\bz) - \Lambtilde(\bz)}{\n^2}  \leq & \frac{\Lamb(\bz) - \tilde{\Lambtilde}(\bz)}{\n^2} + \frac{\tilde{\Lambtilde}(\bz) - \Lambtilde(\bz)}{\n^2} \\
   = & \frac{1}{n^2}\sum_{\kk} \sum_{\el} \left( N_{q\el}^o(\bz) \left[ f(\hy_{\kk\el}) - f(\bary_{\kk\el}) \right] - N_{q\el}^o(\bz^\star) \alskl (\hykl^\star - \barykl^\star) \right)\\
   & + \frac{1}{n^2}\sum_{\kk} \sum_{\el}  f(\barykl^\star)\underbrace{\left[ N_{q\el}^o(\bz) - N_{q\el}^o(\bz^\star) - \rho(N_{q\el}(\bz) - N_{q\el}(\bz^\star)) \right]}_{=A_{q\ell}}  \\
   & + \frac{1}{n^2}\sum_{\kk} \sum_{\el} [ N_{q\el}^o(\bz) - \rho N_{q\el}(\bz)]( f(\bary_{\kk\el}^\star) - f(\bary_{\kk\el})) 
\end{align*}
where $f(x) =  x\normpm(x) - \norm\circ\normpm(x)$, $\hykl^\star = 
\hykl(\bzs)$ and $\barykl^\star = \normp(\alskl)$. 
\paragraph{Concerning the first term.}
The function $f$ is twice differentiable on $\mathring{\mathcal{A}}$ with $f'(x) = 
\normpm(x)$ and $f''(x) = 1 / \norm'' \circ \normpm(x)$. $f'$ (resp. $f''$)
are bounded over $I = \normp(C_{\pi})$ by $C_{\al}$ 
(resp. $1/\vmin^2$) so that:
\begin{equation*}
  f(\hykl) - f(\barykl) = f'(\barykl) \left( \hykl - \barykl \right) + 
\Omega\left( (\hykl - \barykl)^2 \right)
\end{equation*}
By Proposition B.1 (adapted for SBM) of \cite{BraultKeribinMariadassou2017} 
, $(\hykl - \barykl)^2 = 
\bigO_P(1/\n^2)$ where the $\bigO_P$ is uniform in $\bz$ and does not 
depend on $\bzs$. Similarly, 
\begin{equation*}
  f'(\barykl) = f'(\barykl^\star) + \Omega(\barykl - \barykl^\star) = \alskl 
+ \Omega(\barykl - \barykl^\star)
\end{equation*}
$\barykl$ is a convex combination of the $\Salkl = \normp(\alskl)$ therefore, 
\begin{align*}
  | \barykl - \barykl^\star | &= \left| \frac{\left[\RQbz\transpose \Sal \RQbz 
\right]_{\kk\el}}{\pizk\pizl} - \barykl^\star \right| \\
  & \leq \left( 1 - \frac{\RQbz_{\kk\kk}\RQbz_{\el\el}}{\pizk\pizl}\right) 
(\Salmax - \Salmin)
\end{align*}
Note that: 
\begin{align*}
  \sum_{\kk,\el} N_{q\ell}^o(\bz) \left( 1 - 
\frac{\RQbz_{\kk\kk}\RQbz_{\el\el}}{\pizk\pizl}\right) &= n^2\rho(1 + \smallO_P(1))\sum_{\kk, \el} [1-\RQbz_{\kk\kk}\RQbz_{\el\el}] \\ 
 &= n^2\rho(1 + \smallO_P(1))[1 - \Trace(\RQbz)^2] \\ 
 & \leq n\rho(1 + \smallO_P(1)) 2\|\bz - \bzs\|_{0, \sim}
\end{align*}
and $\hykl - \barykl = \smallO_P(1)$. Therefore 
\begin{equation*}
\frac{1}{n^2}\sum_{\kk,\el} N_{q\ell}^o(\bz) \Omega(\barykl - \barykl^\star) \times (\hykl 
- \barykl) = \smallO_P\left( \frac{\|\bz - \bzs\|_{0, \sim}}{\n}\right)
\end{equation*}
The remaining term writes
\begin{align*}
\frac{1}{n^2}\sum_{\kk,\el} \alskl \left[ N_{q\ell}^o(\bz) (\hykl - \barykl) - N_{q\ell}^o(\bz^\star) 
(\hykl^\star - \barykl^\star)  \right]
\end{align*}
and is also $\smallO_P(\left( \|\bz - \bzs\|_{0, \sim} / \n \right)$ uniformly in $\bz$ and $\bzs \in 
\Omega_1$ by Proposition~\ref{proposition:maxdiffz}. 

\paragraph{Concerning the second term.}

For all $q,\ell$, defining 

$$
\left\{
    \begin{array}{ll}
        N_{q\ell}^{+}(\bz, \bzs) = n^2\sum_{q^\prime}\RQbz_{\kk\kk^\prime}(\bz)  \sum_{\ell^\prime}\RQbz_{\ell\ell^\prime}(\bz)  - n^2\RQbz_{\kk\kk} \RQbz_{\el\el}\\
        N_{q\ell}^{-}(\bz, \bzs) = n^2\sum_{q}\RQbz_{\kk\kk^\prime}(\bz)  \sum_{\ell}\RQbz_{\ell\ell^\prime}(\bz)  - n^2\RQbz_{\kk\kk} \RQbz_{\el\el}
    \end{array}
\right.
$$
and noticing that $N_{q\ell}^{+}(\bz, \bzs) = \#\{ (i,j) : z_{iq}=1, z_{j\ell}=1, (z_{q\el},z_{j\el}) \neq (z_{q\el}^\star,z_{j\el}^\star)\}$ and $N_{q\ell}^{-}(\bz, \bzs) = \#\{ (i,j) : z_{iq}^\star=1, z_{j\ell}^\star=1, (z_{q\el},z_{j\el}) \neq (z_{q\el}^\star,z_{j\el}^\star)\}$. Using the following notations

\begin{equation*}
 \hat\rho_{q\ell}^{+} = \frac{1}{N_{q\ell}^{+}(\bz, \bzs)} \sum_{(i,j) \in N_{q\ell}^{+}(\bz, \bzs)}R_{ij}, \quad \hat\rho_{q\ell}^{-} = \frac{1}{N_{q\ell}^{-}(\bz, \bzs)} \sum_{(i,j) \in N_{q\ell}^{-}(\bz, \bzs)}R_{ij}
\end{equation*}

we are able to write
\begin{align*}
 A_{q\ell} = & \sum\limits_{\substack{i<j \\ z_{iq}=1, z_{j\ell}=1}}(R_{ij}-\rho) - \sum\limits_{\substack{i<j \\ z_{iq}^\star=1, z_{j\ell}^\star=1}} (R_{ij}-\rho) \\
 = & N_{q\ell}^{+}(\bz, \bzs)(\hat\rho_{q\ell}^{+} - \rho) - N_{q\ell}^{-}(\bz, \bzs)(\hat\rho_{q\ell}^{-} - \rho).
\end{align*}
Where the second equality is the sum of independent random variables.  \\
Note that :

\begin{align*}
  \sum_{q\el} N_{q\ell}^{+}(\bz, \bzs) &= \sum_{q\el} N_{q\ell}^{-}(\bz, \bzs) \\
  &= n^2\sum_{\kk, \el} [1-\RQbz_{\kk\kk}\RQbz_{\el\el}] \\ 
 &= n^2[1 - \Trace(\RQbz)^2] \\ 
 & \leq n 2\|\bz - \bzs\|_{0, \sim}
\end{align*}

also that $\hat\rho_{q\ell}^{+}-\rho = \smallO_P\left(1\right)$ and $\hat\rho_{q\ell}^{-}-\rho = \smallO_P\left(1\right)$. Therefore
\begin{equation*}
 \frac{1}{n^2}\sum_{\kk} \sum_{\el}  f(\barykl^\star)A_{q\el} = \smallO_P\left( \frac{\|\bz - \bzs\|_{0, \sim}}{\n}\right).
\end{equation*}

\paragraph{Concerning the third term.}
Using arguments developed previously leads to the same conclusion than before :
\begin{equation*}
 \frac{1}{n^2}\sum_{\kk} \sum_{\el} [ N_{q\el}^o(\bz) - \rho N_{q\el}(\bz)]( f(\bary_{\kk\el}^\star) - f(\bary_{\kk\el})) = \smallO_P\left( \frac{\|\bz - \bzs\|_{0, \sim}}{\n}\right).
\end{equation*}

As a conclusion, writing 
\begin{equation*}
\sup_{\bz \nsim \bzs} \frac{\Lamb(\bz) - 
\Lambtilde(\bzs)}{\n \|\bz - \bzs\|_{0, 
\sim}} = \sup_{\bz \nsim \bzs} \left( \frac{\Lamb(\bz) - \Lambtilde(\bz) }{n \|\bz - \bzs\|_{0, 
\sim}} + \frac{\Lambtilde(\bz) - \Lambtilde(\bzs)}{n \|\bz - \bzs\|_{0, 
\sim}} \right)
\end{equation*}
and noticing that $\frac{\Lambtilde(\bz) - \Lambtilde(\bzs)}{n \|\bz - \bzs\|_{0, 
\sim}} \leq 0$ since $\Lambtilde$ is maximized in $\bzs$ (see \ref{prop:maximum-conditional-likelihood}). We have
\begin{equation*}
\sup_{\bz \nsim \bzs} \frac{\Lamb(\bz) - 
\Lambtilde(\bzs)}{\n \|\bz - \bzs\|_{0, 
\sim}} = \smallO_P(1).
\end{equation*}

\end{proof}

\subsection{Proof of Proposition~\ref{prop:small-deviations-profile-likelihood} (contribution of local assignments)} 

\begin{proof}
By Proposition~\ref{cor:prob-regular-configurations-star}, it is enough to prove 
that the sum is small compared to $\prob(\bzs, \byo; \bthetas)$ on $\Om_1$. 
We work conditionally on $\bzs \in \mcZ_1$. Choose $\bz$ in $S(\bzs, C)$ with $C$ defined in 
proposition~\ref{prop:large-deviations-profile-likelihood}. 
\begin{align*}
  \log \left( \frac{\prob(\bz, \byo; \btheta)}{\prob(\bzs, \byo; 
\bthetas)} \right) & = \log \left( \frac{\prob(\bz; \btheta)}{\prob(\bzs; \bthetas)}\right) + \Fn(\btheta, \bz) \\
\end{align*}
For $C$ small enough, we can assume without loss of generality that $\bz$ 
is the representative closest to $\bzs$ and note $r = \|\bz - 
\bzs\|_0$. Then:
\begin{align*}
  \Fn(\btheta, \bz) & \leq \Lamb(\bz) - \Lambtilde(\bz) + 
\Lambtilde(\bz) \\
  & \leq \Lamb(\bz) - \Lambtilde(\bz) - 
c\rho\frac{3\delta(\bals)}{4}\n r \\
  & \leq c\rho\frac{3\delta(\bals)}{4}\n r(1 + 
\smallO_P(1)) \\
\end{align*}
where the first line comes from the definition of $\Lamb$, the second line from 
Proposition~\ref{prop:maximum-conditional-likelihood} and the third from 
Proposition~\ref{prop:profile-likelihood-convergence-local}. Thanks to 
proposition~\ref{cor:marginalprobabilties}, we also know that:
\begin{equation*}
 \log \left( \frac{\prob(\bz; \btheta)}{\prob(\bzs; \bthetas)}\right) 
\leq \bigO_P(1) \exp\left\{M_{c/4}r \right\}
\end{equation*}
There are at most ${\n \choose r} Q^{r}$ assignments 
$\bz$ at distance $r$ of $\bzs$ and each of them has 
at most $Q^{Q}$ equivalent configurations. Therefore, 
\begin{align*}
 & \frac{\sum_{\substack{\bz \in S(\bzs, \tilde{c}) \\ \bz 
\nsim \bzs}} \prob(\bz, \byo; \btheta)}{\prob(\bzs, \byo; 
\bthetas)} \\
  & \leq \bigO_P(1) \sum_{\substack{r \geq 1}} {\n \choose r}\Q^{\Q + r} \exp\left( r M_{c/4} - c \rho\frac{3\delta(\bals)}{4}\n r(1 + \smallO_P(1)) \right) 
\\
  & = \bigO_P(1) \left(1+ e^{(\Q+ 1)\log \Q + M_{c/4} - c\rho\n \frac{3  
\delta(\bals)(1 + \smallO_P(1))}{4} }\right)^{\n} -1 \\
  & \leq \bigO_P(1) a_{\n} \exp(a_{\n})
\end{align*}
where $a_{\n} = \n e^{(\Q + 1)\log \Q + M_{c/4} - c\rho\n \frac{3 
\delta(\bals)(1 + \smallO_P(1))}{4} } = \smallO_P(1)$.

\end{proof}
 
\subsection{Proof of Proposition~\ref{prop:equivalent-configurations-profile-likelihood} (contribution of equivalent assignments)} 

\begin{proof}

Choose $s$ permutations of $\{1, \dots, \Q\}$ and assume that $\bz = \bz^{\vrai,s}$. Then $\prob(\byo, \bz; \btheta) = \prob(\byo, \bz^{\vrai, s}; \btheta) = \prob(\byo, \bzs; \btheta^{s})$.  If furthermore $s \in \Symmetric(\btheta)$, $\btheta^{s} = \btheta$ and immediately $\prob(\byo, \bz; \btheta) = \prob(\byo, \bzs; \btheta)$. We can therefore partition the sum as 

\begin{align*}
  \sum_{\bz \sim \bz^\star} \prob(\byo, \bz; \btheta) & = \sum_{s} \prob(\byo, \bz^{\vrai, s}; \btheta) \\ 
  & = \sum_{s} \prob(\byo, \bzs; \btheta^{s}) \\ 
  & = \sum_{\btheta' \sim \btheta} \# \Symmetric(\btheta') \prob(\byo, \bzs; \btheta') \\ 
  & = \# \Symmetric(\btheta) \sum_{\btheta' \sim \btheta} \prob(\byo, \bzs; \btheta') \\ 
\end{align*}

$\prob(\byo, \bzs; \btheta)$ unimodal in $\btheta$, with a mode in $\bthetaMC$. By consistency of $\bthetaMC$, either $\prob(\byo, \bzs; \btheta) = \smallO_P(\prob(\byo, \bzs; \bthetas))$ or $\prob(\byo, \bzs; \btheta) = \bigO_P(\prob(\byo, \bzs; \bthetas))$ and $\btheta \to \bthetas$. In the latter case, any $\btheta' \sim \btheta$ other than $\btheta$ is bounded away from $\bthetas$ and thus $\prob(\byo, \bzs; \btheta') = \smallO_P(\prob(\byo, \bzs; \bthetas))$. In summary, 
\[
 \sum_{\btheta' \sim \btheta} \frac{\prob(\byo, \bzs; \btheta')}{\prob(\byo, \bzs; \bthetas)} = \max_{\btheta' \sim \btheta} \frac{\prob(\byo, \bzs; \btheta')}{\prob(\byo, \bzs; \bthetas)} (1 + \smallO_P(1))
\]

\end{proof}

\subsection{Proof of Corollary~\ref{cor:behaviorEMV}: Behavior of $\bthetaEMV$}
\label{annexe:cor:behaviorEMV}

We may prove the corollary by contradiction. Note first that 
unless $\bTheta$ is constrained and with high probability, $\bthetaEMV$ and 
$\hbtheta(\bzs)$ exhibit no symmetries. Indeed, equalities like $\hykl 
= \hy_{\kp, \lp}$ have vanishingly small probabilities of being 
simultaneously true when $y_{ij}$ is discrete, and even null when $y_{ij}$ is 
continuous. Assume then $\min_{s} 
(\hbpi_{MLE}^{s} - \hat{\bpi}\left(\bzs\right)) \neq 
\smallO_P\left(1/\sqrt{\n}\right)$ or 
$\min_{s} (\hbal_{MLE}^{s} - \hat{\bal}\left(\bzs\right)) \neq 
\smallO_P\left(1/\n\right)$ where $s$ is a 
permutation of $\{1, \dots, \Q\}$. Then, 
by Proposition~\ref{prop:LocalAsymp} and the consistency of 
$\hat{\btheta}\left(\bzs\right)$
\begin{equation}\label{eq:proof:ConsistenceEMV}
\min_{s} \Lcs\left(\hat{\btheta}\left(\bzs\right)\right)- 
\Lcs\left(\bthetaEMV^{s}\right)=\Omega_{P}(1).
\end{equation}
But, since $\hat{\btheta}\left(\bzs\right)$ and $\bthetaEMV$ maximise 
respectively $\frac{\prob(\byo, \bzs; \btheta')}{\prob(\byo, \bzs; 
\bthetas)}$ and $\frac{\prob(\byo; \btheta)}{\prob\left(\byo; 
\btheta^{\vrai}\right)}$ and have no symmetries, it follows by 
Theorem~\ref{thm:observed-akin-to-complete-general} that 
\[\left|\frac{\prob\left(\byo, \bzs; 
\hat{\btheta}\left(\bzs\right)\right)}{\prob(\byo, \bzs; \bthetas)}- 
\max_{s} \frac{\prob\left(\byo, \bzs; 
\bthetaEMV^{s}\right)}{\prob(\byo, \bzs; \bthetas)}\right|=\smallO_P(1)\]
which contradicts Eq~(\ref{eq:proof:ConsistenceEMV}) and concludes the proof.

\subsection{Proof of Corollary~\ref{cor:Variational}: Behavior of $\Jvar\left(\setQ,\btheta\right)$}\label{annexe:cor:Variational}

Remark first that for every $\btheta$ and for every $\bz$, 
\[\prob\left(\byo,\bz;\btheta\right)\leq\exp\left[\Jvar\left(\delta_{\bz},\btheta\right)\right]
\leq 
\underset{\setQ\in\mcQ}{\max}\;\exp\left[\Jvar\left(\setQ,\btheta\right)\right]
\leq\prob\left(\byo;\btheta\right)\]
where $\delta_{\bz}$ denotes the dirac mass on $\bz$. By dividing by 
$\prob\left(\byo;\bthetas\right)$, we obtain
\[\frac{\prob\left(\byo,\bz;\btheta\right)}{\prob\left(\byo;\bthetas\right)}
\leq 
\frac{\underset{\setQ\in\mcQ}{\max}\;\exp\left[\Jvar\left(\setQ,
\btheta\right)\right]}{\prob\left(\byo;\bthetas\right)} 
\leq\frac{\prob\left(\byo;\btheta\right)}{\prob\left(\byo;\bthetas\right)}.\]
As this inequality is true for every couple $\bz$, we have in particular:
\[
\underset{\bz \sim 
\bzs}{\max}\frac{\prob\left(\byo,\bz;\btheta\right)}{ 
\prob\left(\byo;\bthetas\right) } = \underset{\btheta' \sim \btheta}{\max} 
\frac{\prob\left(\byo,\bzs;\btheta'\right)}{ 
\prob\left(\byo;\bthetas\right) }\leq 
\frac{\underset{\setQ\in\mcQ}{\max}\;\exp\left[\Jvar\left(\setQ, 
\btheta\right)\right]}{\prob\left(\byo;\bthetas\right)} .\]
Noticing that 
$\prob\left(\byo;\bthetas\right) = \#\Symmetric(\bthetas)\prob\left(\byo,
\bzs;\bthetas\right)(1+o_p(1))$, 
Theorem~\ref{thm:observed-akin-to-complete-general} therefore leads to the 
following bounds:
\begin{multline*}
\underset{\btheta' \sim \btheta}{\max} 
\frac{\prob\left(\byo,\bzs;\btheta'\right)}{ 
\prob\left(\byo, \bzs;\bthetas\right)}(1+\smallO_P(1)) \leq 
\frac{\underset{\setQ\in\mcQ}{\max}\;\exp\left[\Jvar\left(\setQ, 
\btheta\right)\right]}{\prob\left(\byo, \bzs;\bthetas\right)} \\
\leq 
\#\Symmetric(\btheta) \underset{\btheta' \sim \btheta}{\max} 
\frac{\prob\left(\byo,\bzs;\btheta'\right)}{\prob\left(\byo, \bzs ;\bthetas\right)}(1+\smallO_P(1)) + \smallO_P(1).
\end{multline*}
Again unless $\bTheta$ is constrained, $\bthetaVAR$ exhibits no symmetries with 
high probability and the same proof by contradiction as in 
appendix~\ref{annexe:cor:behaviorEMV} gives the result.\\


\section{Sub-exponential random variables}
\label{appendix:sub-exponential}

We now prove two propositions regarding subexponential variables. 
Recall first that a random variable $X$ is  sub-exponential with parameters $(\tau^2, b)$ if for all $\lambda$ such that $|\lambda|\leq 1/b$,
\[
\Esp[e^{\lambda( X-\Esp(X))}] \leq \exp\left( \frac{\lambda^2 \tau^2}{2} \right).
\]
In particular, all distributions coming from a natural exponential family are sub-exponential. Sub-exponential variables satisfy a large deviation Bernstein-type inequality:
\begin{equation}
  \label{eq:concentration-subexponential}
  \Prob( X - \Esp[X] \geq t) \leq 
  \begin{cases} 
    \exp\left( - \frac{t^2}{2 \tau^2}\right)  & \text{if} \quad 0 \leq t \leq \frac{\tau^2}{b} \\ 
    \exp\left( - \frac{t}{2b}\right)  & \text{if} \quad t \geq \frac{\tau^2}{b}
  \end{cases}
\end{equation}
So that
\[
\Prob( X - \Esp[X] \geq t) \leq \exp\left( - \frac{t^2}{2(\tau^2+bt)}\right)
\]
The subexponential property is preserved by summation and multiplication. 
\begin{itemize}
\item If $X$ is sub-exponential with parameters $(\tau^2, b)$ and $\alpha \in \mathbb{R}$, then so is $\alpha X$ with parameters $(\alpha^2\tau^2, \alpha b)$
\item If the $X_i$, $i = 1,\dots,n$ are sub-exponential with parameters $(\tau_i^2, b_i)$ and independent, then so is $X = X_1 + \dots + X_n$ with parameters $(\sum_i \tau_i^2,\max_i b_i)$
\end{itemize}

\begin{thm}[Equivalent characterizations of sub-exponential variables]\label{thm:subexp_eq}
 For a zero-mean random variable $X$, the following statements are equivalent:
 \begin{enumerate}
  \item There are non-negative numbers $(\nu,b^{-1})$ such that $$\Esp[e^{\lambda X}] \leq \exp\left( \frac{\lambda^2 \nu^2}{2} \right) \quad \text{for all } |\lambda|<b.$$
  \item There is a positive number $c_0>0$ such that $\Esp[e^{\lambda X}]<\infty$ for all $|\lambda|<c_0$.
  \item There are constants $c_1,c_2>0$ such that $$ \mathbb{P}(|X|\geq t) \leq c_1 e^{-c_2 t} \quad \text{for all } t>0.$$
  \item The quantity $\gamma:=\sup_{k\geq2}\left[ \frac{\mathbb{E}[X^k]}{k!} \right]^{1/k}$ is finite.
 \end{enumerate}
\end{thm}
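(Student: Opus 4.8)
The plan is to prove the four statements equivalent by establishing the cycle $(1)\Rightarrow(2)\Rightarrow(3)\Rightarrow(4)\Rightarrow(1)$, the classical route for sub-exponential variables. A preliminary reduction: since $|\Esp[X^k]|\le\Esp[|X|^k]$, statement~(4) is equivalent to finiteness of $\gamma':=\sup_{k\ge 2}\big(\Esp[|X|^k]/k!\big)^{1/k}$, and I would work with $\gamma'$ throughout to sidestep the sign ambiguity in how (4) is written; note $\Esp[|X|^k]\le(\gamma')^k\,k!$ for all $k\ge 2$. Then $(1)\Rightarrow(2)$ is immediate, since (1) gives $\Esp[e^{\lambda X}]\le e^{\lambda^2\nu^2/2}<\infty$ for $|\lambda|<b$, so (2) holds with $c_0=b$. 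For $(2)\Rightarrow(3)$, fix $\lambda\in(0,c_0)$ and apply Markov's inequality to $e^{\lambda X}$ and to $e^{-\lambda X}$ (both integrable by (2)): $\Pbb(X\ge t)\le e^{-\lambda t}\Esp[e^{\lambda X}]$ and $\Pbb(X\le -t)\le e^{-\lambda t}\Esp[e^{-\lambda X}]$; summing gives $\Pbb(|X|\ge t)\le c_1 e^{-c_2 t}$ with $c_2=\lambda$ and $c_1=\Esp[e^{\lambda X}]+\Esp[e^{-\lambda X}]$ (the bound is vacuous but still valid for small $t$ where the right side exceeds $1$).

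For $(3)\Rightarrow(4)$, integrate the tail: $\Esp[|X|^k]=\int_0^\infty k t^{k-1}\Pbb(|X|\ge t)\,dt\le c_1\int_0^\infty k t^{k-1}e^{-c_2 t}\,dt=c_1\,k!/c_2^k$, so $\big(\Esp[|X|^k]/k!\big)^{1/k}\le c_1^{1/k}/c_2$, which is bounded uniformly over $k\ge 2$ (it tends to $1/c_2$), giving $\gamma'<\infty$. The step $(4)\Rightarrow(1)$ is the one needing the most care, and is where I expect the only real (and modest) difficulty. From $\gamma'<\infty$ all moments are finite and, for $|\lambda|<1/\gamma'$, the series $\sum_{k\ge 0}|\lambda|^k\Esp[|X|^k]/k!\le\sum_{k\ge 0}(|\lambda|\gamma')^k$ converges, so dominated convergence justifies the expansion $\Esp[e^{\lambda X}]=\sum_{k\ge 0}\lambda^k\Esp[X^k]/k!$. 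The $k=0$ term equals $1$ and the $k=1$ term vanishes because $X$ is centered, hence
\[
\Esp[e^{\lambda X}]\le 1+\sum_{k\ge 2}(|\lambda|\gamma')^k=1+\frac{\lambda^2(\gamma')^2}{1-|\lambda|\gamma'}.
\]
Restricting to $|\lambda|\le 1/(2\gamma')$ gives $\Esp[e^{\lambda X}]\le 1+2\lambda^2(\gamma')^2\le\exp\big(2\lambda^2(\gamma')^2\big)$, i.e.\ (1) holds with $\nu^2=4(\gamma')^2$ and $b=1/(2\gamma')$.

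Finally I would note that this is precisely what Propositions~\ref{prop:subexp_variables} and~\ref{prop:subexp_mixture} invoke via statement~(2): for $y_\pi\sim\dens(\cdot,\pi)$ from a regular exponential family the MGF is finite in a neighbourhood of $0$, yielding (2) and hence (1) with explicit parameters controlled uniformly over the compact $C_\pi$ through $\vmax^2$ and $\neighborsize$; and for $x=y_\pi r_{ij}+\lambda r_{ij}$ one checks (2) directly, since $\Esp[e^{tx}]=(1-\rho)+\rho\,e^{t\lambda}\Esp[e^{t y_\pi}]$ is finite for $t$ near $0$, so $x$ is sub-exponential as well. The main obstacle, such as it is, lies purely in the bookkeeping of constants in $(4)\Rightarrow(1)$ and in justifying the term-by-term expansion of the moment generating function; both are routine once the moment growth bound $\Esp[|X|^k]\le(\gamma')^k k!$ is in hand.
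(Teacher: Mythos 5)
The paper does not actually prove this theorem: it simply cites Wainwright's lecture notes. Your cyclic argument $(1)\Rightarrow(2)\Rightarrow(3)\Rightarrow(4)\Rightarrow(1)$ is the standard proof found there, and all four implications are carried out correctly: the Chernoff/Markov step for $(2)\Rightarrow(3)$, the tail-integration identity $\Esp[|X|^k]=\int_0^\infty k t^{k-1}\Pbb(|X|\ge t)\,dt\le c_1k!/c_2^k$ for $(3)\Rightarrow(4)$, and the truncated geometric-series bound $\Esp[e^{\lambda X}]\le 1+\lambda^2(\gamma')^2/(1-|\lambda|\gamma')\le e^{2\lambda^2(\gamma')^2}$ on $|\lambda|\le 1/(2\gamma')$ for $(4)\Rightarrow(1)$ are all sound, and your closing remark correctly identifies statement~(2) as the route by which Propositions~\ref{prop:subexp_variables} and~\ref{prop:subexp_mixture} invoke the theorem.

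The one genuine (though minor) gap is in your preliminary reduction. You declare statement~(4) equivalent to finiteness of $\gamma'=\sup_{k\ge2}\bigl(\Esp[|X|^k]/k!\bigr)^{1/k}$ on the strength of $|\Esp[X^k]|\le\Esp[|X|^k]$, but that inequality only gives $\gamma\le\gamma'$, i.e.\ the implication $(4')\Rightarrow(4)$. What your cycle actually needs is the converse, $(4)\Rightarrow(4')$: since your $(3)\Rightarrow(4)$ step establishes the stronger $\gamma'<\infty$, the loop as written proves that (1), (2), (3) and $(4')$ are equivalent and imply (4), but does not show that (4) alone implies any of the others. The fix is one line: by Cauchy--Schwarz, $\Esp[|X|^k]\le\bigl(\Esp[X^{2k}]\bigr)^{1/2}\le\gamma^{k}\sqrt{(2k)!}\le(2\gamma)^k\,k!$, using $(2k)!=\binom{2k}{k}(k!)^2\le 4^k(k!)^2$, whence $\gamma'\le 2\gamma$. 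With that inserted, the proof is complete and self-contained, which is more than the paper itself provides.
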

\begin{proof}
 A proof of this theorem can be found in \cite{wainwright}.
\end{proof}

\begin{proposition}[Maximum in $\bz$] \label{proposition:maxdiffz}
  Let $(\bar{\bz}$ be any configuration and $\bz$ the 
$\sim$-equivalent configuration that achieves $\|\bz - \bzs\|_0 = \|\bar{\bz} - 
\bzs\|_{0,\sim}$ let $\hykl = \hat{y}_{\kk, \el}(\bz)$ (resp. 
$\barykl(\bz)$) and $\hykl^\star = \hat{y}_{\kk, \el}(\bzs)$ (resp. $\barykl^\star = \barykl(\bzs)$ = $\normp(\alskl)$) be as 
defined in Equations~\eqref{eq:mle-complete-likelihood} 
and~\eqref{eq:expectation-ychap}. Under the assumptions of the 
section~\ref{sec:assumptions}, for all $\vareps \leq \neighborsize 
\vmax^2$, 
  \begin{multline*}
    \label{eq:ineg-de-bernst-1}
    \Prob \left( \max_{\bar{\bz} \nsim \bzs} \max_{k,l} 
\frac{ N^{o}_{q\el}(\bz) (\hat{y}_{\kk, \el} - \barykl) - N^{o}_{q\el}(\bzs) 
(\hykl^\star - \barykl^\star ) }{\n\|\bz - \bzs\|_{0}} > \vareps \right) = \smallO(1)
  \end{multline*}
\end{proposition}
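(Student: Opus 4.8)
The plan is to set $r = \|\bz - \bzs\|_0$ and, for a generic assignment $\bz$, abbreviate
\[
 W_{q\el}(\bz) := N^o_{q\el}(\bz)\left(\hyklz - \barykl(\bz)\right) = \sum_{i \neq j} \ziq \zjl \rrij \left(\yij - \barykl(\bz)\right),
\]
so that the quantity to control is $W_{q\el}(\bz) - W_{q\el}(\bzs)$, with $\barykl(\bzs) = \normp(\alskl)$ by Proposition~\ref{prop:expectation-ychap} (the confusion matrix $\RQbzs$ is diagonal). I would then write $\yij - \barykl(\bz) = \left(\yij - \normp(\als_{\zs_i\zs_j})\right) + \left(\normp(\als_{\zs_i\zs_j}) - \barykl(\bz)\right)$ and $\rrij = \rho + (\rrij - \rho)$, which splits $W_{q\el}(\bz)$ into a centered subexponential sum $A_{q\el}(\bz) = \sum_{i\neq j}\ziq\zjl\rrij(\yij - \normp(\als_{\zs_i\zs_j}))$, a deterministic bias $\rho B_{q\el}(\bz)$ with $B_{q\el}(\bz) = \sum_{i\neq j}\ziq\zjl(\normp(\als_{\zs_i\zs_j}) - \barykl(\bz))$, and a centered bounded sum $C_{q\el}(\bz) = \sum_{i\neq j}\ziq\zjl(\rrij - \rho)(\normp(\als_{\zs_i\zs_j}) - \barykl(\bz))$. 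The key observation is that $\normp(\als_{\zs_i\zs_j}) = \normp(\alskl)$ on every dyad with $\zs_{iq}\zs_{j\el} = 1$, so $B_{q\el}(\bzs) = C_{q\el}(\bzs) = 0$ and it suffices to bound $A_{q\el}(\bz) - A_{q\el}(\bzs)$, $\rho B_{q\el}(\bz)$ and $C_{q\el}(\bz)$, each divided by $\n r$, uniformly over $(q,\el)$ and over $\bz \nsim \bzs$; throughout I would work on the event $\Om_1$ of Proposition~\ref{cor:prob-regular-configurations-star}, which only adds a $\smallO(1)$ term to the probability.

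First, $A_{q\el}(\bz) - A_{q\el}(\bzs) = \sum_{i\neq j}(\ziq\zjl - \zs_{iq}\zs_{j\el})\rrij(\yij - \normp(\als_{\zs_i\zs_j}))$; the coefficient vanishes unless $i$ or $j$ is a node on which $\bz$ and $\bzs$ disagree, so this is a sum of at most $2\n r$ independent centered variables, each subexponential with parameters $(\nu^2, b^{-1})$ by Proposition~\ref{prop:subexp_mixture}, hence subexponential with parameters $(2\n r\nu^2, b^{-1})$. The Bernstein inequality of Appendix~\ref{appendix:sub-exponential} then gives $\Prob(|A_{q\el}(\bz) - A_{q\el}(\bzs)| \geq \vareps\n r) \leq 2\exp(-c_1\vareps^2\n r)$ for a constant $c_1 > 0$ depending only on $\nu$, $b$ and the admissible (bounded) range of $\vareps$. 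The bias term is deterministic given $(\bz,\bzs)$: from the identity $\sum_{i,j}\ziq\zjl(\normp(\als_{\zs_i\zs_j}) - \barykl(\bz)) = 0$ defining $\barykl(\bz)$ one gets $B_{q\el}(\bz) = -\sum_i\ziq\,\z_{\ii\el}(\normp(\als_{\zs_i\zs_i}) - \barykl(\bz))$, which vanishes for $q\neq\el$ and is $\bigO(\zsumq) = \bigO(\n)$ for $q = \el$, so $\rho|B_{q\el}(\bz)|/(\n r) = \bigO(\rho) = \smallO(1)$ uniformly, using only $\rho\to 0$.

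For the last term $C_{q\el}(\bz)$ I would split on the size of $r$. If $r > c\n/4$ then $\n r \asymp \n^2$, and since $C_{q\el}(\bz)$ is a sum of at most $\n^2$ independent centered variables bounded by $2(\Salmax-\Salmin)$, a Hoeffding bound at level $\vareps\n r$ gives $\Prob(|C_{q\el}(\bz)| \geq \vareps\n r) \leq 2\exp(-c_2\vareps^2\n r)$. If $r \leq c\n/4$, I would split $C_{q\el}(\bz)$ into the contribution of the dyads with $(\zs_i,\zs_j) = (q,\el)$ and the remaining at most $2\n r$ dyads; on $\Om_1$ the blocks of $\bzs$ have size $\Omega(\n)$, so, exactly as in the proof of Proposition~\ref{prop:profile-likelihood-convergence-local}, $|\normp(\alskl) - \barykl(\bz)| \leq (1 - \RQbz_{qq}\RQbz_{\el\el}/(\pizk\pizl))(\Salmax-\Salmin) = \bigO(r/\n)$, so the first contribution is a sum of $\leq \n^2$ centered variables of size $\bigO(r/\n)$ and the second a sum of $\leq 2\n r$ centered variables of size $\bigO(1)$; two Hoeffding bounds then give $\Prob(|C_{q\el}(\bz)| \geq \vareps\n r) \leq 4\exp(-c_3\vareps^2\n r)$. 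Finally I would union-bound over the $\Q^2$ choices of $(q,\el)$ and the at most $\binom{\n}{r}(\Q-1)^r \leq (\n\Q)^r$ equivalence classes at $\|\cdot\|_{0,\sim}$-distance $r$, and sum over $r \geq 1$: since $c_i\vareps^2\n > 2\log(\n\Q)$ for $\n$ large, $\sum_{r\geq 1}(\n\Q)^r\Q^2 e^{-c_i\vareps^2\n r}$ is a geometric series with ratio tending to $0$, hence $\smallO(1)$, and together with the deterministic $\bigO(\rho)$ bound from the bias this yields the statement.

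I expect the main obstacle to be the term $C_{q\el}(\bz)$: treated crudely as a sum of $\asymp\n^2$ bounded variables it has fluctuations of order $\n$, the same order as the target $\n r$ when $r$ is small, so the argument genuinely needs the rate estimate $|\normp(\alskl) - \barykl(\bz)| = \bigO(r/\n)$ on the correctly-classified dyads --- which is why one must work on $\Om_1$ --- together with the dichotomy on $r$ so that the ``far'' regime, where this estimate is unavailable, is instead absorbed by the fact that $\n r$ is then already of order $\n^2$.
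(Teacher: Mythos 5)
Your proposal is correct, and its core mechanism is the same as the paper's: reduce the numerator to a sum of $\bigO(\n r)$ independent centered subexponential terms, apply the Bernstein-type inequality of Appendix~\ref{appendix:sub-exponential} at level $\vareps \n r$, and close with a union bound over the $\bigO((\n\Q)^{r})$ configurations at distance $r$ followed by a geometric summation over $r \geq 1$. Where you differ is in the bookkeeping before the concentration step, and your version is actually the more complete one. The paper asserts that the numerator \emph{equals} $\sum_{i,j}(\zik\zjl - \zik^{\vrai}\zjl^{\vrai})(\yij\rrij - \normp(\als_{\zs_i\zs_j})\rho)$ and stops there; this identity is not exact, because $\barykl(\bz)$ is a weighted average of the $\normp(\als_{\zs_i\zs_j})$ rather than a single centering constant, and because $N^o_{q\el}$ is random rather than equal to $\rho N_{q\el}$. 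Your exact decomposition $W = A + \rho B + C$ isolates precisely the pieces the paper drops: the deterministic bias $B$ (which is indeed harmless, and would vanish entirely if $\barykl$ were taken as the exact conditional expectation with the diagonal $i=j$ excluded) and the cross term $C$, whose crude fluctuation $\bigO_P(\n)$ is of the same order as the target $\vareps\n r$ when $r = \bigO(1)$. Your two-regime treatment of $C$ --- using the $\bigO(r/\n)$ bound on $|\normp(\alskl) - \barykl(\bz)|$ for correctly classified dyads on $\Om_1$ when $r \leq c\n/4$, and absorbing the far regime into $\n r \asymp \n^2$ --- is exactly the right fix, and it reuses an estimate the paper itself establishes in the proof of Proposition~\ref{prop:profile-likelihood-convergence-local}. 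The only caveat is that your bound $\rho|B_{q\el}(\bz)|/(\n r) = \bigO(\rho)$ relies on $\rho \to 0$ (Assumption $A_0$); in a fixed-$\rho$ reading one should instead define $\barykl$ with the diagonal excluded so that $B \equiv 0$, but this is a normalization detail, not a gap.
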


\begin{proof}
Note $r = \|\bz - \bzs\|_{0}$. The numerator within the $\max$ in the fraction 
can be expanded to 
\begin{equation*}
Z_{\kk\el}(\bz) = \sum_{i, j} (\zik\zjl - \zik^\star \zjl^\star) (y_{ij}r_{ij} 
- \als_{\zik^\star \zjl^\star}\rho)
\end{equation*}
and is thus a sum of at most $N = \n r$ 
non-null centered subexponential random variables with parameters $(a^2, 
1/w)$. It is therefore a centered subexponential with 
parameters $(N a^2, 1/w)$. By Bernstein inequality, for all 
$\vareps \leq \neighborsize a^2$ we have
\begin{equation*}
\Prob (Z \geq \vareps \n r ) \leq \exp\left(- 	
\frac{\n r\vareps^2}{2a^2} \right).
\end{equation*}
There are at most $\n^{r} \Q^{r} \Q^\Q$ $\bz$ at $\|.\|_{0,\sim}$ distance 
$r$ of $\bzs$. An union bound shows that:
\begin{multline*}
\Prob \left( \max_{\bar{\bz} \nsim \bzs} \max_{q,\ell} 
\frac{Z_{\kk\el}(\bz)}{\n\|\bz - \bzs\|_{0}} 
 \geq \vareps \right) \\ 
 \leq \sum_{r \geq 1} \sum_{\substack{r=\|\bar{\bz} - \bzs\|_{0, 
\sim}}{}} \Q^2 \Prob (Z_{\kk\el}(\bz) \geq 
\vareps \n r ) \\
 \leq \sum_{r \geq 1} \Q^\Q \exp\left(-\n r\vareps^2 / 2 a^2 + r \log(\n\Q) + 2 \log(\Q)\right) = o(1)
\end{multline*}
where the last equality is true as soon as $\n \vareps_{\n} \gg \log\n$.

\end{proof}

\section{Likelihood ratio of assignments}
\label{appendix_likelihood-ratio}

\begin{proposition}\label{cor:marginalprobabilties}\textbf{}
Let $\bzs$ be $c/2$-regular and $\bz$ at $\|.\|_0$-distance $c/4$ of $\bzs$. Then, for all $\btheta \in \bTheta$
\begin{equation*}
\log \frac{\prob(\bz; \btheta)}{\prob(\bzs; \bthetas)} \leq \bigO_P(1) \exp \left\{ M_{c/4}  \|\bz - \bzs \|_0 \right\} \\ 
\end{equation*}
\end{proposition}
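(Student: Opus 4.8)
The plan is to dominate $\prob(\bz;\btheta)$, uniformly over $\btheta$, by its value at the empirical block frequencies, and then expand. Since $\prob(\bz;\btheta)=\prod_\kk\pii_\kk^{\zsumk}$ depends on $\btheta$ only through the proportion vector $\bpi$, maximising over the simplex gives, for every $\btheta\in\bTheta$, $\prob(\bz;\btheta)\le\prod_\kk\pizk^{\zsumk}$ with $\pizk=\zsumk/\n$; so it suffices to bound $\prod_\kk\pizk^{\zsumk}\big/\prod_\kk\pisk^{\zssumk}$. Writing $\phi(t)=t\log t$ and $\pizsk=\zssumk/\n$, I would split its logarithm as $T_1(\bz)+T_2$ with
\[
  T_1(\bz)=\n\sum_\kk\bigl(\phi(\pizk)-\phi(\pizsk)\bigr),\qquad T_2=\n\sum_\kk\pizsk\log\frac{\pizsk}{\pisk},
\]
the key point being that $T_2$ does not depend on $\bz$.

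For $T_1$: since $\bzs$ is $c/2$-regular and $\bz$ lies within relative radius $c/4$ of it, every count $\zsumk$ is at least $\n c/4$, so all arguments of $\phi$ lie in the compact interval $[c/4,1]$, on which $|\phi'|=|1+\log(\cdot)|$ is bounded by some constant $\tfrac12 M_{c/4}$ depending only on $c$. A mean-value estimate then gives $|T_1(\bz)|\le\tfrac12 M_{c/4}\sum_\kk|\zsumk-\zssumk|\le M_{c/4}\|\bz-\bzs\|_0$, using that a node assigned differently by $\bz$ and $\bzs$ changes exactly two of the counts $\zsumk$ by one, hence $\sum_\kk|\zsumk-\zssumk|\le 2\|\bz-\bzs\|_0$. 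This is the term responsible for the factor $e^{M_{c/4}\|\bz-\bzs\|_0}$.

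The delicate step — and the one I expect to be the main obstacle — is showing $T_2=\bigO_P(1)$, rather than the merely trivial $\smallO_P(\n)$: this is what supplies the $\bigO_P(1)$ prefactor, and it is exactly what the downstream use in Proposition~\ref{prop:small-deviations-profile-likelihood} needs, since there this prefactor multiplies a sum over $\bz\nsim\bzs$ whose decay rests on $\rho\n\gg\log\n$ and an $e^{\smallO_P(\n)}$ factor would be fatal. The observation is that $\pizsk=\zssumk/\n$ is the vector of empirical block frequencies of the $\n$ i.i.d.\ latent labels, whose common law is multinomial with parameter $\bpis$, and $\bpis$ lies in the interior of the simplex by assumption $A_1$; hence $2T_2$ is precisely the log-likelihood-ratio statistic of this multinomial model against its saturated version, so Wilks' theorem gives $2T_2\xrightarrow[\n \to \infty]{\mathcal{D}}\chi^2_{\q-1}$ and in particular $T_2=\bigO_P(1)$. (Equivalently, one argues from the central limit theorem $\sqrt\n(\pizsk-\pisk)=\bigO_P(1)$ together with the local quadratic behaviour of the Kullback divergence at $\bpis$, giving $T_2=\bigO_P(\n\|\widehat{\bpi}(\bzs)-\bpis\|^2)=\bigO_P(1)$.)

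Putting the two bounds together yields $\log\{\prob(\bz;\btheta)/\prob(\bzs;\bthetas)\}\le M_{c/4}\|\bz-\bzs\|_0+\bigO_P(1)$ uniformly over $\btheta\in\bTheta$; exponentiating and using $e^{\bigO_P(1)}=\bigO_P(1)$ gives $\prob(\bz;\btheta)/\prob(\bzs;\bthetas)\le\bigO_P(1)\,e^{M_{c/4}\|\bz-\bzs\|_0}$, which is the asserted bound. Apart from the tightness of $T_2$, every ingredient is elementary — the simplex domination of $\prob(\bz;\btheta)$ and a mean-value bound for $\phi$ on an interval bounded away from $0$ — so I would expect the write-up to be short.
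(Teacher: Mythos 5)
Your proof is correct and follows essentially the same route as the paper's: both dominate $\prob(\bz;\btheta)$ by its value at the multinomial MLE $\hat{\bpi}(\bz)$, split off the Wilks-type term $\log\{\prob(\bzs;\hat{\bpi}(\bzs))/\prob(\bzs;\bpis)\}=\bigO_P(1)$ (your $T_2$), and bound the entropy difference (your $T_1$) by $M_{c/4}\|\bz-\bzs\|_0$. The only difference is that the paper outsources the $T_1$ bound to Lemma B.6 of \cite{BraultKeribinMariadassou2017}, whereas you prove it directly with a mean-value estimate on $t\mapsto t\log t$ over $[c/4,1]$, which is a welcome self-contained substitute.
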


\proofbegin
Note then that:
\begin{align*}
\frac{\prob(\bz; \btheta)}{\prob(\bzs; \bthetas)} & = & \frac{\prob(\bz; \bpi)}{\prob(\bzs; \bpis)} = 
\frac{\prob(\bz; \bpi)}{\prob(\bzs; \hat{\bpi}(\bzs))} \frac{\prob(\bzs; \hat{\bpi}(\bzs))}{\prob(\bzs; \bpis)} \\
& \leq & \frac{\prob(\bz; \hat{\bpi}(\bz))}{\prob(\bzs; \hat{\bpi}(\bzs))} \frac{\prob(\bzs; \hat{\bpi}(\bzs))}{\prob(\bzs; \bpis)} \\
& \leq & \exp \left\{ M_{c/4} \|\bz - \bzs \|_0 \right\} \times \frac{\prob(\bzs; \hat{\bpi}(\bzs))}{\prob(\bzs; \bpis)} \\
& \leq & \bigO_P(1) \exp \left\{ M_{c/4}  \|\bz - \bzs \|_0  \right\}  \\
\end{align*}
where the first inequality comes from the definition of $\hat{\bpi}(\bz)$ and the second from Lemma B.6 of \cite{BraultKeribinMariadassou2017} 
and the fact that $\bzs$ and $\bz$ are $c/4$-regular. 
Finally, local asymptotic normality of the MLE for multinomial proportions ensures that $\frac{\prob(\bzs; \hat{\bpi}(\bzs))}{\prob(\bzs; \bpis)} = \bigO_P(1)$. 
\proofend





\end{document}